\let\@@pmod\pmod
\DeclareRobustCommand{\pmod}{\@ifstar\@pmods\@@pmod}
\def\@pmods#1{\mkern4mu({\operator@font mod}\mkern 6mu#1)}
\newcommand{\Q}{\mathbb Q}
\newcommand{\C}{\mathbb C}
\newcommand{\R}{\mathbb R}
\newcommand{\A}{\mathbb A}
\newcommand{\p}{\mathfrak p}
\newcommand{\q}{\mathfrak q}
\newcommand{\m}{\mathfrak m}
\newcommand{\n}{{\mathfrak n}}
\renewcommand{\o}{\mathfrak o}
\renewcommand{\a}{\mathfrak a}
\renewcommand{\c}{\mathfrak c}
\newcommand{\idm}{\mathsf m}
\newcommand{\nid}{\mathsf n}
\newcommand{\qid}{\mathsf q}
\newcommand{\aid}{\mathsf a}
\newcommand{\pid}{\mathsf p}
\newcommand{\uid}{\mathsf u}
\newcommand{\vid}{\mathsf v}
\newcommand{\rid}{\mathsf r}
\newcommand{\W}{\mathcal W}
\renewcommand{\AA}{\mathcal A}
\newcommand{\PP}{\mathbb P}
\renewcommand{\P}{\operatorname{P}}
\newcommand{\B}{\operatorname{B}}
\newcommand{\T}{\operatorname{T}}
\newcommand{\GL}{\operatorname{GL}}
\newcommand{\SL}{\operatorname{SL}}
\newcommand{\U}{\operatorname{U}}
\newcommand{\ord}{\operatorname{ord}}
\newcommand{\Ind}{\operatorname{Ind}}
\newcommand{\gr}{Gr\"{o}\ss{}encharakter}
\newcommand{\vol}{\operatorname{vol}}
\newcommand{\bigboxplus}{
  \mathop{
    \vphantom{\bigoplus}
    \mathchoice
      {\vcenter{\hbox{\resizebox{\widthof{$\displaystyle\bigoplus$}}{!}{$\boxplus$}}}}
      {\vcenter{\hbox{\resizebox{\widthof{$\bigoplus$}}{!}{$\boxplus$}}}}
      {\vcenter{\hbox{\resizebox{\widthof{$\scriptstyle\oplus$}}{!}{$\boxplus$}}}}
      {\vcenter{\hbox{\resizebox{\widthof{$\scriptscriptstyle\oplus$}}{!}{$\boxplus$}}}}
  }\displaylimits
}
\theoremstyle{plain}
\newtheorem{theorem}{Theorem}[section]
\newtheorem{corollary}[theorem]{Corollary}
\newtheorem{lemma}[theorem]{Lemma}
\newtheorem{proposition}[theorem]{Proposition}
\numberwithin{equation}{section}
\theoremstyle{remark}
\newtheorem*{remark}{Remark}
\begin{document}
\title{New integral representations for Rankin--Selberg $L$-functions}
\author{Andrew R.\ Booker}
\email{\tt andrew.booker@bristol.ac.uk}
\thanks{A.~R.~B.\ was partially supported by EPSRC Grant \texttt{EP/K034383/1}.
M.~L.\ was supported by a Royal Society University Research Fellowship.
No data were created in the course of this study.}
\author{M.\ Krishnamurthy}
\email{muthu-krishnamurthy@uiowa.edu}
\author{Min Lee}
\email{\tt min.lee@bristol.ac.uk}
\address{School of Mathematics\\University of Bristol\\
University Walk\\Bristol\\BS8 1TW\\United Kingdom}
\address{Department of Mathematics\\University of Iowa\\14 MacLean
Hall\\Iowa City, IA 52242-1419\\USA}

\begin{abstract}
We derive integral representations for the Rankin--Selberg $L$-functions
on $\GL(3)\times\GL(1)$ and $\GL(3)\times\GL(2)$ by a process of unipotent
averaging at archimedean places. A key feature of our result is that it
allows one to fix the choice of test vector at finite places, irrespective
of ramification. This enables a new proof of the functional equation
for $\GL(3)\times\GL(2)$ Rankin--Selberg $L$-functions in many cases.
\end{abstract}
\maketitle
\section{introduction}\label{intro}
Let $f(z)=\sum_{n=1}^\infty a_n e^{2\pi i n z}$
be a holomorphic newform of weight $k$ and level $\Gamma_0(N)$.
Let $\chi$ be a primitive Dirichlet character of conductor $q$ and set
$f_\chi(z)=\sum_{n=1}^\infty a_n\chi(n)e^{2\pi in z}$.
Then, by \cite[Lemma 4.3.10]{Mi}, we have
$$
f_\chi=\frac{\tau(\chi)}{q}\sum_{a=1}^q\overline{\chi(-a)}
f\Bigr|_k\begin{psmallmatrix}1&a/q\\&1\end{psmallmatrix},
$$
where $\tau(\chi)$ is the associated Gauss sum.
Taking the Mellin transform of both sides, we obtain
\begin{equation}\label{eq:addtomult}
\Lambda(s,f\times\chi)=\frac{\tau(\chi)}{q}
\sum_{a=1}^q\overline{\chi(-a)}\Lambda(s,f,a/q),
\end{equation}
where $\Gamma_\C(s)=2(2\pi)^{-s}\Gamma(s)$,
$$
\Lambda(s,f,a/q)=\Gamma_\C(s)\sum_{n=1}^\infty a_ne^{2\pi ian/q}n^{-s}
\quad\text{and}\quad
\Lambda(s,f\times\chi)=\Gamma_\C(s)\sum_{n=1}^\infty a_n\chi(n)n^{-s}.
$$

The above notion of \emph{unipotent averaging} is a key ingredient in
Weil's proof of the \emph{converse theorem} \cite[Theorem 4.3.15]{Mi}
as well as in our
prior work \cite{B-Kr, B-Kr2} on $\GL(2)$ converse theorems. The goal of
the present paper is to prove an analogue of \eqref{eq:addtomult} for
higher-rank
Rankin--Selberg $L$-functions using a similar process of unipotent
averaging; see Theorem~\ref{main1}.
Our result can be interpreted as
an integral representation for the Rankin--Selberg $L$-function
in terms of the associated (Whittaker) \emph{essential functions}, and
we speculate that this may have applications to the study of
algebraicity properties and nonvanishing of special values.
We outline some necessary notation and explain our result in
greater detail below.

Let $F$ be a number field with ad\`ele ring $\A_F$, and let $\pi\cong
\bigotimes\pi_v$ (resp.\ $\tau\cong\bigotimes\tau_v$) be an irreducible generic
(with respect to some fixed additive character $\psi=\bigotimes\psi_v$
of $F\backslash\A_F$) automorphic representation of $\GL_n(\A_F)$
(resp.\ $\GL_m(\A_F)$), with $m<n$. For each place $v$ of $F$, let
$L(s,\pi_v\boxtimes\tau_v)$ be the local Rankin--Selberg factor
\cite{JPSS3, J2} and set
$$
\Lambda(s,\pi\boxtimes\tau)=\prod_vL(s,\pi_v\boxtimes\tau_v).
$$
The completed $L$-function
$\Lambda(s,\pi\boxtimes\tau)$, definied initially for $\Re(s)\gg1$,
continues to a meromorphic function of $s\in\C$ and satisfies
a functional equation relating $\Lambda(s,\pi\boxtimes\tau)$ to
$\Lambda(1-s,\widetilde{\pi}\boxtimes\widetilde{\tau})$
\cite{JPSS3}. The collection of such functions for a fixed
$\pi$ and suitably varying $\tau$ plays a central role in {converse
theorems}, whose purpose is to characterize automorphic representations
$\pi$ in terms of the analytic behavior of $\Lambda(s,\pi\boxtimes\tau)$.

For each non-archimedean place
$v$ of $F$, let $\xi_v^0$ (resp.\ $\varphi_v^0$) denote the ``essential
vector'' in the space of $\pi_v$ (resp.\ $\tau_v$) \cite{JPSS,
J1, Mat}, and let $W_{\xi_v^0}\in\W(\pi_v,\psi_v)$
(resp.\ $W_{\varphi_v^0}\in\W(\tau_v,\psi_v^{-1})$) be
the associated essential Whittaker functions (as in loc.~cit.) formed
relative to a $\psi_v$ whose conductor is $\o_v$. When $\tau_v$
is unramified, it follows from \cite[Corollary 3.3]{Mat} that
\begin{equation}\label{unr3}
L(s,\pi_v\boxtimes\tau_v)=\int_{\U_m(F_v)\backslash\GL_m(F_v)}
W_{\xi_v^0}\!\begin{pmatrix}h_v&\\&I_{n-m}\end{pmatrix}
W_{\varphi_v^0}(h_v)\|\det{h_v}\|_v^{s-\frac{n-m}{2}}\,dh_v
\end{equation}
for a suitable normalization of the measure on
$\U_m(F_v)\backslash\GL_m(F_v)$. For $m=n-1$, the above equality is a
part of the characterization of the essential vector in \cite{JPSS,J1},
which we review in the next section; the fact that it is true for any
$m\leq n-1$ is a result of a concrete realization of essential functions
in \cite{Mat}. It follows from this that, if $\tau$ is such that $\tau_v$
is unramified at every finite $v$, then the Rankin--Selberg $L$-function
$\Lambda(s,\pi\boxtimes\tau)$ can be represented by a global integral
of the corresponding automorphic forms, which unfolds to a product of
local integrals of the above shape.

However, when $\tau$ is ramified at a place $v$, the local integral
can vanish unless $\xi_v^0$ is adjusted in a suitable way. In fact
in general one has that there is an element
$t_v^0\in\W(\pi_v,\psi_v)\otimes\W(\tau_v,\psi_v^{-1})$
whose local zeta integral is precisely the $L$-function
$L(s,\pi_v\boxtimes\tau_v)$. Now, let $S$ be the finite set
of finite places where $\tau_v$ is ramified. For $v\in S$, let
$t_v^0$ be as described above, and for a finite place $v$ not in $S$ set
$t_v^0=W_{\xi_v^0}\otimes W_{\varphi_v^0}$. For any choice of a pair
of archimedean Whittaker functions $(W_\infty, W_\infty')$, writing each
$t_v^0$ for $v\in S$ as a sum of pure tensors, we get a finite sum of pure
(global) tensors such that
\[
\sum_j W_j\otimes W_j'=(W_\infty\otimes W_\infty')\prod_{v<\infty}t_v^0.
\]
Fixing such a decomposition, let $(\phi_j,\varphi_j)$ be the automorphic
forms associated with $(W_j,W_j')$. Then
\begin{equation}\label{eqn-ir}
P_\infty(s)\Lambda(s,\pi\boxtimes\tau)
=\sum_j\int_{\GL_m(F)\backslash\GL_m(\A_F)}
\PP_m^n(\phi_j)\!\left(\begin{pmatrix}h&\\&I_{n-m}\end{pmatrix}\right)
\varphi_j(h)\|\det h\|^{s-\frac{n-m}{2}}\,dh,
\end{equation}
where $P_\infty(s)$ is an entire function depending only on the pair
$(W_\infty,W_\infty')$, and $\PP_m^n$ is the projection operator defined
in \eqref{Pdef} below.

In this paper, we study integral representations for
$L(s,\pi\boxtimes\tau)$ in the case $n=3$, $m\in\{1,2\}$.  We show
that (in the above notation) one can take $t_v^0=W_{\xi_v^0}\otimes
W_{\varphi_v^0}$ at all finite places $v$, regardless of the ramification
of $\tau$, after going through a process of unipotent averaging
on the right-hand side of \eqref{eqn-ir}. To illustrate our main
theorem (Theorem~\ref{main1}), let us consider the classical case, $F=\Q$.
Let $N,q\geq 1$ be the conductors of $\pi$ and $\tau$, respectively,
and assume for simplicity that $(N,q)=1$ and $\tau_v$ is a twist-minimal
principal series representation for every finite place $v$.
We fix the essential
vector $\xi_f^0=\bigotimes_{v<\infty}\xi_v^0$ and consider pure
tensors $\xi$ in the space of $\pi$ of the form $\xi=\xi_\infty\otimes
\xi_f^0$. Let $U_\xi$ be the function on $\GL_3(\A_\Q)$
obtained by summing the associated Whittaker function $W_\xi$ as
in \eqref{U}. Then $U_\xi$ may be
viewed as an automorphic function on $\GL_2(\A_\Q)$ by restriction via
$h\mapsto\begin{psmallmatrix}h&\\&1\end{psmallmatrix}$. It in fact
corresponds to a \emph{classical} cusp form of level $1$. If we write
$\Phi$ to denote the restriction of $U_\xi$ to
$\GL_3(\R)$, then the corresponding classical automorphic
form on $\GL_2(\R)$ is given by $h\mapsto
\Phi\begin{psmallmatrix}h&\\&1\end{psmallmatrix}$
for $h\in\GL_2(\R)$. Our key
observation is that by shifting this
function by a rational unipotent element, one can produce automorphic
forms (on $\GL_2(\R)$) of arbitrary level. To be precise, let
$(\beta_1,\beta_2)\in\Q^2$ with common denominator $q$, and consider
the function
$$
h\mapsto
\Phi\!\begin{pmatrix}h&{}^t\beta\\&1\end{pmatrix},
\quad\text{for }h\in\GL_2(\R).
$$
As we show in Section~\ref{fourier}, this is an automorphic form with
respect to the principal congruence subgroup $\Gamma(q)$ of level $q$.
If $\varphi$ is an automorphic function in the space of $\tau$ that
corresponds to a pure tensor whose component at every finite place $v$
is $\varphi_v^0$, put
\[
\Lambda(s,\pi,\tau,\xi_\infty,\varphi_\infty,\beta)
=\int_{\Gamma(q)\backslash\GL_2(\R)}
\Phi\!\begin{pmatrix}h&{}^t\beta\\&1\end{pmatrix}
\varphi(h)\|\det\,h\|_{\R}^{s-\frac12}\,dh.
\]

We can paraphrase Theorem~\ref{main1} in this context as saying
that
\[
P_\infty(s)\Lambda(s,\pi\boxtimes\tau)=
\Lambda(s,\pi,\tau,\xi_\infty,\varphi_\infty,(0,1/q)),
\]
for a certain entire function $P_\infty(s)$, depending on the pair
$(\xi_\infty,\varphi_\infty)$. (Notice the similarity with \eqref{eqn-ir},
albeit here the integral on the right-hand side involves classical
forms.) Further, applying Theorem~\ref{main1} we deduce the analytic
continuation and functional equation of $\Lambda(s,\pi\boxtimes\tau)$
(cf.\ Theorem~\ref{main2}). Our proof is different from the proof of
Jacquet, Piatetski-Shapiro and Shalika \cite{JPSS3}, in that it does not
use the local functional equations at ramified places. The argument
is analogous to the proof of the functional equation for additive twists
on $\GL(2)$ (see \cite[\S{A.3--A.4}]{KMV}), and it is based on
a higher-rank matrix identity appearing in \cite{LM}.

Theorem~\ref{main1} is similar in
spirit to the \emph{global Birch lemma} of \cite{Jan, KMS}, which plays
a central role in the study of $\p$-adic interpolation of special values
of $L(s,(\pi\otimes\chi)\boxtimes\tau)$ for a cuspidal pair $(\pi,\tau)$
and a Hecke character $\chi$ of finite order and non-trivial $\p$-power
conductor. While in loc.~cit.\ the unipotent averaging is done at the
finite place corresponding to $\p$, here we do it over the archimedean
places as in \cite{B-Kr}, which allows us to retain the essential vector
at all finite places. In addition, it should be emphasized that $\chi$
can be taken to be trivial in our setting.
\subsection*{Acknowledgements}
The second author (M.~K.) would like to thank A.~Raghuram for pointing him to \cite{Jan}.

\section{preliminaries}\label{prelim}
Let $\o_F$ denote the ring of integers of $F$. For
each place $v$ of $F$, let $F_v$ denote the completion of $F$ at
$v$. For $v<\infty$, let $\o_v$ denote the ring of integers in $F_v$,
$\p_v$ the unique maximal ideal in $\o_v$ and $q_v$ the cardinality
of $\o_v/\p_v$. Also, for every finite $v$, fix a generator
$\varpi_v$ of $\p_v$ with absolute value $\|\varpi_v\|_v=q_v^{-1}$. Let
$F_\infty=\prod_{v\mid\infty}F_v$ and let $\A_F=F_\infty\times\A_{F,f}$
denote the ring of ad\`eles of $F$. Throughout the paper, we fix
an {\it unramified} additive character $\psi=\bigotimes\psi_v$ of $F\backslash\A_F$ whose local conductor at every finite place $v$ is $\o_v$, i.e., $\psi_v$ is trivial on $\o_v$ but non-trivial on $\p_v^{-1}$.  We write $\psi_{\infty}$ to denote the character $\prod_{v|\infty}\psi_v$ of $F_{\infty}$.

For any $n>1$, let $\B_n=\T_n\text{U}_n$ be the Borel subgroup of $\GL_n$
consisting of upper triangular matrices; let $\P_n'\supset\B_n$ be the
standard parabolic subgroup of type $(n-1,1)$ with Levi decomposition
$\P_n'=\text{M}_n\text{N}_n$. Then $\text{M}_n\cong\GL_{n-1}\times\GL_1$ and
\[
\text{N}_n=\left\{\begin{pmatrix}I_{n-1}&\ast\\&1\end{pmatrix}\right\}.
\]
If $R$ is any $F$-algebra and $H$ is any algebraic $F$-group, we write
$H(R)$ to denote the corresponding group of $R$-points. Let $\P_n(R)\subset
\P_n'(R)$ denote the mirabolic subgroup consisting of matrices whose last
row is of the form $(0,\ldots,0,1)$, i.e.,
$$
\P_n(R)=\left\{\begin{pmatrix}h&y\\&1\end{pmatrix}:
h\in\GL_{n-1}(R), y\in R^{n-1}\right\}
\cong\GL_{n-1}(R)\ltimes \text{N}_n(R).
$$
Let $w_n$ denote the long Weyl element in $\GL_n(R)$. For $1\leq m\leq n-1$, let $\alpha_m$ denote the permutation matrix
\[
\alpha_m=\begin{pmatrix}&1&\\I_m&&\\&&I_{n-m-1}\end{pmatrix}.
\]
Note that $\alpha_{n-1}=w_n\begin{psmallmatrix}w_{n-1}&\\&1\end{psmallmatrix}$.
The character
\[
u\mapsto\psi\!\left(\sum u_{i,i+1}\right)\quad\text{for }u\in \text{U}_n(\A)
\]
defines a \emph{generic character} of $\U_n(F)\backslash\U_n(\A_F)$,
and by abuse of notation we continue to denote this character by
$\psi$. Similarly, for each $v$, we also obtain a generic character
$\psi_v$ of $\U_n(F_v)$. Further, for any algebraic subgroup
$V\subseteq\U_n$, $\psi$ and $\psi_v$ define characters of $V(\A_F)$ and
$V(F_v)$, respectively, via restriction. In particular, we may consider
the character $\psi|_{\text{N}_n(\A_F)}$; its stabilizer in $\text{M}_n(\A_F)$ is
then $\P_{n-1}(\A_F)$, where we regard $\P_{n-1}$ as a subgroup of $\text{M}_n$
via $h\mapsto\begin{psmallmatrix}h&\\&1\end{psmallmatrix}$.

For each $v<\infty$, we will consider certain compact open subgroups
of $\GL_n(F_v)$; namely, let $K_v=\GL_n(\o_v)$, and for any integer
$m\ge 0$, set
\begin{align*}
K_{1,v}(\p_v^m)&=\left\{g\in\GL_n(\o_v):g\equiv
\begin{psmallmatrix}&&&\ast\\&\ast&&\vdots\\&&&\ast\\0&\cdots&0&1\end{psmallmatrix}
\pmod*{\p_v^m}\right\},\\
K_{0,v}(\p_v^m)&=\left\{g\in\GL_n(\o_v):g\equiv
\begin{psmallmatrix}&&&\ast\\&\ast&&\vdots\\&&&\ast\\0&\cdots&0&\ast\end{psmallmatrix}
\pmod*{\p_v^m}\right\},
\end{align*}
so that $K_{1,v}(\p_v^m)$ is a normal subgroup of $K_{0,v}(\p_v^m)$,
with quotient $K_{0,v}(\p_v^m)/K_{1,v}(\p_v^m)\cong(\o_v/\p_v^m)^\times$. Let $K_f=\prod_{v<\infty}K_v$, and for an integral ideal $\a$
of $F$, set
$$
K_i(\a)=\prod_{v<\infty}K_{i,v}(\p_v^{m_v})\quad\text{for }i=0,1,
$$
where $m_v$ are the unique non-negative integers such that
$\a=\prod_v(\p_v\cap\o_F)^{m_v}$.
Then $K_1(\a)\subseteq K_0(\a)\subseteq K_f$ are compact open subgroups of
$\GL_n(\A_{F,f})$. We may also form the corresponding principal
congruence subgroups of $\GL_n(F_\infty)$, embedded diagonally, namely,
$$
\Gamma_i(\a)=\{\gamma\in\GL_n(F):\gamma_f\in K_i(\a)\}
\subset\GL_n(F_\infty)\quad
\text{for }i=0,1,
$$
where $\gamma_f$ denotes the image of $\gamma$ in $\GL_n(\A_{F,f})$.

From strong approximation for $\GL_n$, it follows that the set
$\GL_n(F)\backslash\GL_n(\A_F)/\GL_n(F_\infty)K_1(\a)$ of double cosets
is finite of cardinality $h$, where $h$ is the class number of $F$.
Let us write
\begin{equation}\label{sa}
\GL_n(\A_F)=\coprod_{j=1}^h\GL_n(F)g_j\GL_n(F_\infty)K_1(\a),
\end{equation}
where each $g_j\in\GL_n(\A_{F,f})$. Then
\begin{equation}\label{sa2}
\GL_n(F)\backslash\GL_n(\A_F)/K_1(\a)\cong
\coprod_{j=1}^h\Gamma_{1,j}(\a)\backslash\GL_n(F_\infty),
\end{equation}
where $\Gamma_{1,j}(\a)=\{\gamma\in\GL_n(F):
\gamma_f\in g_j\,K_1(\a)g_j^{-1}\}
\subset\GL_n(F_\infty)$, embedded diagonally.
Replacing $K_1(\a)$ by $K_0(\a)$ in this definition,
we get the corresponding groups $\Gamma_{0,j}(\a)$.

Before proceeding further, let us introduce our choice of
measures. For each place $v$ of $F$ and $n\geq 1$, we normalize the
Haar measure on $\GL_n(F_v)$ and $K_v$ so that $\vol(K_v)=1$. We
fix the Haar measure on $\U_n(F_v)$ for which $\vol(\U_n(F_v)\cap
K_v)=1$. From these measures, we obtain a right-invariant measure
on $\U_n(F_v)\backslash\GL_n(F_v)$. Globally, on $\GL_n(\A_F)$
and $\U_n(\A_F)$, respectively, we take the corresponding product
measure. On the compact quotient $\U(F)\backslash\U_n(\A_F)$, we choose
the right-invariant measure of unit volume. Since
$\U_2(\A_F)\cong\A_F$ and $\A_F=F+F_\infty+\prod_{v<\infty}\o_v$,
it follows from our normalization that $\text{vol}(\o_F\backslash
F_\infty)=1$.

Suppose $(\pi, V_\pi)$ is an irreducible admissible representation
of $\GL_n(\A_F)$ (not necessarily automorphic) whose central
character $\omega_\pi$ is an id\`ele class character. We write
$\pi\cong\bigotimes\pi_v$ as a restricted tensor product with respect
to a distinguished set of spherical vectors, where for each finite $v$,
$\pi_v$ is an irreducible admissible representation of $\GL_n(F_v)$
on a complex vector space $V_{\pi_v}$, and for each $v\mid\infty$,
$\pi_v$ is an irreducible admissible Harish-Chandra module. However,
for $v\mid\infty$, we can pass to the canonical completion of $\pi_v$
in the sense of Casselman and Wallach (see \cite{J2}) to obtain a smooth
representation of moderate growth of $\GL_n(F_v)$. By abuse of notation we
continue to write $(\pi_v,V_{\pi_v})$ to denote the canonical completion
at an archimedean place $v$ and let $(\pi_\infty, V_{\pi_\infty})$
denote the topological tensor product of these representations. Thus
we may (and we will) take $V_\pi$ to be the restricted tensor product
$V_{\pi_\infty}\otimes\bigotimes_{v<\infty}V_{\pi_v}$ and $\pi$ the
corresponding representation of $\GL_n(\A_F)$.

Now with $\pi\cong\bigotimes\pi_v$ as in the previous paragraph,
let us further assume that each $\pi_v$ is generic, meaning there is
a non-zero linear form (continuous when $v\mid\infty$) $\lambda_v:
V_{\pi_v}\to\C$ satisfying $\lambda_v(\pi_v(n)w)=\psi_v(n)\lambda_v(w)$
for $w\in V_{\pi_v}, n\in\text{N}_n(F_v)$. Let $\W(\pi_v,\psi_v)$ denote the
Whittaker model of $\pi_v$, viz.\ the space of functions on $\GL_n(F)$
defined by $g\mapsto\lambda_v(\pi_v(g)w)$ for fixed $w\in V_{\pi_v}$. By
taking the tensor product of $\lambda_{\pi_v}$ for $v\mid\infty$,
we can also form the space $\W(\pi_\infty,\psi_\infty)$. We note that
for $v<\infty$ where $\pi_v$ is unramified, there is a unique vector
$W_{\pi_v}^0\in\W(\pi_v,\psi_v)$ that is fixed under $K_v$ and takes the
value $1$ at the identity. The global Whittaker model $\W(\pi,\psi)$ of
$\pi$ is the space spanned by the functions $W_\infty\prod_{v<\infty}W_v$
with $W_\infty\in\W(\pi_\infty,\psi_\infty)$, $W_v\in\W(\pi_v,\psi_v)$
and $W_v=W_{\pi_v}^0$ for almost all $v$. For every $\eta\in V_\pi$
we denote by $W_\eta$ the corresponding element of $\W(\pi,\psi)$.

Following \cite{CPS1, JPSS2}, for each $\eta\in V_\pi$ we set
\begin{equation}\label{U}
U_\eta(g)=\sum_{\gamma\in\U_n(F)\backslash\P_n(F)}W_\eta(\gamma g),
\end{equation}
which can also be written as
\[
U_\eta(g)=\sum_{\gamma\in\U_{n-1}(F)\backslash\GL_{n-1}(F)}
W_\eta\!\left(\begin{pmatrix}\gamma&\\&1\end{pmatrix}g\right).
\]
It is shown in \cite[Section 12]{JPSS2} that this sum converges
absolutely and uniformly on compact subsets to a continuous function
on $\GL_n(\mathbb A_F)$, and that it is cuspidal along the unipotent
radical of any standard maximal parabolic subgroup of $\GL_n$.
Further, $U_\eta$ is left invariant under both $\P_n(F)$ and the center
$Z_n(F)$. For $W\in\W(\pi,\psi)$, let
$\widetilde{W}(g)=W(w_n{}^tg^{-1})\in\W(\tilde{\pi},\psi^{-1})$.
For each place $v$, if $W_v\in\W(\pi_v,\psi_v)$, we similarly define
$\widetilde{W}_v\in\W(\pi_v,\psi_v^{-1})$. Let $V_\eta$ (cf.~\cite{CPS1})
denote the function
\[
V_\eta(g)=\sum_{\gamma\in\U_n'(F)\backslash\operatorname{Q}_n(F)}
W_\eta(\alpha_{n-1}\gamma g),
\]
where 
$\operatorname{Q}_n=\overline{\P}_n$ is the opposite mirabolic subgroup
and $\U_n'=\alpha_{n-1}^{-1}\U_n\alpha_{n-1}$. If we put
\[
\widetilde{U}_\eta(g)=\sum_{\gamma\in\U_{n-1}(F)\backslash\GL_{n-1}(F)}
\widetilde{W}_\eta\!\left(\begin{pmatrix}\gamma&\\&1\end{pmatrix}g\right).
\]
Then as explained in \cite{B-Kr4}, one has the equality
$\widetilde{U}_\eta(^t{}g^{-1})=V_\eta(g)$.
The following is a basic result of Cogdell
and Piatetski-Shapiro \cite{CPS1,CPS2}:
\begin{proposition}\label{prop-CPS}
Suppose $\pi$ is as above with automorphic central character $\omega_\pi$.
Then $\pi$ is cuspidal automorphic if and only if $U_\eta=V_\eta$ for all
$\eta\in V_\pi$.
\end{proposition}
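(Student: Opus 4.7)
The plan is to prove both implications by relating $U_\eta$ and $V_\eta$ to Fourier expansions of cusp forms on $\GL_n$.

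For the forward direction, assume $\pi$ is cuspidal automorphic, so each $\eta \in V_\pi$ corresponds to a genuine cusp form $\phi_\eta$ on $\GL_n(F) \backslash \GL_n(\A_F)$ with $\psi$-Whittaker coefficient $W_\eta$. The Piatetski-Shapiro--Shalika Fourier expansion --- obtained by iteratively expanding $\phi_\eta$ along the unipotent radicals of the standard maximal parabolics and using cuspidality at each step to kill the trivial characters --- gives
\[
\phi_\eta(g) = \sum_{\gamma \in \U_{n-1}(F) \backslash \GL_{n-1}(F)} W_\eta\!\left(\begin{pmatrix}\gamma & \\ & 1\end{pmatrix} g\right) = U_\eta(g).
\]
Applying the same expansion to the contragredient cusp form (whose Whittaker function is $\widetilde{W}_\eta$) produces $\widetilde{U}_\eta$ on the right-hand side, and transporting through the identity $\widetilde{U}_\eta({}^tg^{-1}) = V_\eta(g)$ recalled just before the proposition gives $V_\eta = \phi_\eta$ as well. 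Hence $U_\eta = V_\eta$.

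For the converse, I would set $\phi_\eta := U_\eta$ and show that this function is cuspidal automorphic and that the assignment $\eta \mapsto \phi_\eta$ intertwines $\pi$ with right translation. Left-invariance of $U_\eta$ under $\P_n(F)$ is immediate from the defining summation over $\U_n(F) \backslash \P_n(F)$, and invariance under the center $Z_n(F)$ follows from the hypothesis that $\omega_\pi$ is an id\`ele class character. A parallel change of variables in the sum defining $V_\eta$, using that right multiplication by any element of $\operatorname{Q}_n(F)$ permutes the cosets $\U_n'(F) \backslash \operatorname{Q}_n(F)$, shows that $V_\eta$ is left-invariant under $\operatorname{Q}_n(F)$. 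The hypothesis $U_\eta = V_\eta$ then forces the common function to be left-invariant under the subgroup of $\GL_n(F)$ generated by $\P_n(F)$, $\operatorname{Q}_n(F)$, and $Z_n(F)$, which is all of $\GL_n(F)$ since these three pieces together contain enough upper unipotents, lower unipotents, a copy of $\GL_{n-1}$, and the scalars to generate the full group. Continuity and moderate growth are given by the convergence result of \cite{JPSS2} recalled in the excerpt, and cuspidality along every parabolic subgroup follows from the vanishing of constant terms along standard maximal parabolics (also recalled) together with the $\GL_n(F)$-conjugacy of parabolic subgroups. Finally, the compatibility $W_{\pi(g)\eta} = \rho(g) W_\eta$ makes $\eta \mapsto U_\eta$ a $\GL_n(\A_F)$-equivariant embedding of $V_\pi$ into the space of cuspidal automorphic forms.

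The main obstacle is the $\GL_n(F)$-invariance step. Establishing that $V_\eta$ is left-invariant under $\operatorname{Q}_n(F)$ requires careful bookkeeping for the $\alpha_{n-1}$ factor in the sum and the conjugation relation $\U_n' = \alpha_{n-1}^{-1} \U_n \alpha_{n-1}$; the subsequent generation claim reduces to the standard Bruhat-type fact that $\P_n(F)$, $\operatorname{Q}_n(F)$, and $Z_n(F)$ generate $\GL_n(F)$. Once left $\GL_n(F)$-invariance is secured, the remaining verifications (moderate growth, cuspidality, equivariance) are routine consequences of facts already recorded in the excerpt.
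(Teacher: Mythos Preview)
The paper does not give its own proof of this proposition; it simply attributes the result to Cogdell and Piatetski-Shapiro \cite{CPS1,CPS2} and states it without argument. Your sketch is essentially the argument found in those references: for the forward direction one identifies both $U_\eta$ and $V_\eta$ with the underlying cusp form $\phi_\eta$ via the Piatetski-Shapiro--Shalika Fourier expansion (the second identification going through the contragredient and the relation $V_\eta(g)=\widetilde{U}_\eta({}^tg^{-1})$), and for the converse one combines the left $\P_n(F)$-invariance of $U_\eta$, the left $\operatorname{Q}_n(F)$-invariance of $V_\eta$, and $Z_n(F)$-invariance to obtain full $\GL_n(F)$-invariance, then invokes the growth and cuspidality facts already quoted from \cite{JPSS2}. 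So your approach matches the one the paper is citing; there is nothing to compare against an in-paper proof.
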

For $m<n$, let $Y=Y_{n,m}$ be the unipotent radical of the standard
parabolic subgroup of $\GL_n$ of type $(m+1,1,\ldots,1)$. (In this
notation, $\text{N}_n=Y_{n,n-2}$.) For a function $f$ on $\GL_n(\A_F)$ that is
left invariant under $Y(F)$, we set
\begin{equation}\label{Pdef}
\PP_m^n(f)(g)=\int_{Y(F)\backslash Y(\A_F)}f(yg)\overline{\psi(y)}\,dy,
\end{equation}
where $dy$ is the Haar measure on $Y(\A_F)$ for which the quotient
$Y(F)\backslash Y(\A_F)$ has unit volume. In particular, we can consider
$\PP_m^n(U_\eta)(g)$, with the following Fourier series expansion:
\begin{equation}\label{fs}
\begin{aligned}
\PP_m^n(U_\eta)(g)&=\sum_{\gamma\in\U_{m+1}(F)\backslash\P_{m+1}(F)}
W_\eta\!\left(\begin{pmatrix}\gamma&\\&I_{n-m-1}\end{pmatrix}g\right)\\
&=\sum_{\gamma\in\U_m(F)\backslash\GL_m(F)}
W_\eta\!\left(\begin{pmatrix}\gamma&\\&I_{n-m}\end{pmatrix}g\right).
\end{aligned}
\end{equation}
Viewed as a function on $\P_{m+1}(\A_F)$, $\PP_m^n(U_\eta)(g)$ is
left invariant under $\P_{m+1}(F)$ and is cuspidal in the sense that all
the relevant unipotent integrals are zero. Observe that $\PP_{n-1}^n$
is the identity.

Suppose $\tau$ is an automorphic subrepresentation of $\GL_m(\A_F)$,
meaning $V_\tau$ is a subspace of the space of automorphic forms on
$\GL_m(\A_F)$. Let $\phi$ be an automorphic form in $V_\tau$ and set
\begin{equation}\label{rs}
I(s;U_\eta,\phi)=\int_{\GL_m(F)\backslash\GL_m(\A_F)}
\PP_m^n(U_\eta)\!\left(\begin{pmatrix}h&\\&I_{n-m}\end{pmatrix}\right)
\phi(h)\|\det h\|^{s-\frac{n-m}{2}}\,dh.
\end{equation}
This integral is absolutely convergent for all $s$, as we are integrating
a cusp form against an automorphic form. For $\Re(s)\gg1$ we can replace
$\PP_m^n(U_\eta)$ by its Fourier expansion and unfold, to get
\[
I(s;U_\eta,\phi)=\int_{\U_m(\A_F)\backslash\GL_m(\A_F)}
W_\eta\!\begin{pmatrix}h&\\&I_{n-m}\end{pmatrix}
W_\phi(h)\|\det h\|^{s-\frac{n-m}{2}}\,dh,
\]
where $W_\phi$ is the Whittaker on $\GL_m(\A_F)$
associated to $\phi$ with respect to $\psi^{-1}$, i.e.\
\[
W_\phi(h)=\int_{\U_m(F)\backslash\U_m(\A_F)}\phi(uh)\psi(u)\,du.
\]

Now, if $\eta$ (resp.\ $\phi$) corresponds to a pure tensor under
$\pi\cong\bigotimes\pi_v$ (resp.\ $\tau\cong\bigotimes\tau_v$), then by
the uniqueness of the Whittaker model we have
\[
W_\eta(g)=\prod_vW_{\eta_v}(g),\quad W_\phi(g)=\prod_vW_{\phi_v}(g_v),
\]
and the integral now factors as
$$
I(s;U_\eta,\phi)=\prod_v\Psi_v(s;W_{\eta_v},W_{\phi_v}),
$$
where the local integrals are given by
$$
\Psi_v(s;W_{\eta_v},W_{\phi_v})=\int_{\U_m(F_v)\backslash\GL_m(F_v)}
W_{\eta_v}\!\begin{pmatrix}h_v&\\&I_{n-m}\end{pmatrix}
W_{\phi_v}(h_v)\,\|\det{h_v}\|_v^{s-\frac{n-m}{2}}\,dh_v.
$$
It follows from the Rankin--Selberg theory of local factors that
\[
E_v(s):=\frac{\Psi_v(s;W_{\eta_v},W_{\phi_v})}{L(s,\pi_v\boxtimes\tau_v)}
\]
is entire for all $v$ \cite{JPSS3, J2}. If $v$ is non-archimedean then
$E_v(s)\in\C[q_v^s,q_v^{-s}]$, and $E_v(s)=1$ for almost all finite $v$.
Setting $E(s)=\prod_vE_v(s)$, for pure tensors $\eta$ and $\phi$ as
above one has
\[
I(s;U_\eta,\phi)=E(s)\prod_vL(s,\pi_v\boxtimes\tau_v).
\]

One also has the analogous integrals involving the function $V_\eta$.
Namely, put $\alpha_m'=\alpha_{n-1}^{-1}\alpha_m$, $1\leq m\leq n-1$,
and consider the function (cf.~\cite{Cog})
\[
V_\eta^m(g)=V_\eta(\alpha_m'g)\quad\text{for }g\in\GL_n(\A_F).
\]
This function is clearly left invariant under
$\alpha_m'^{-1}Q_n(F)\alpha_m'$ which contains the group $Y_{n,m}(F)$.
Hence $\PP_m^n(V_\eta^m)$ is well defined, and we put
\begin{equation}\label{rs1}
I(s;V_\eta,\phi)=\int_{\GL_m(F)\backslash\GL_m(\A_F)}
\PP_m^n(V_\eta^m)\!\left(\begin{pmatrix}h&\\&I_{n-m}\end{pmatrix}\right)
\phi(h)\|\det h\|^{s-\frac{n-m}{2}}\,dh.
\end{equation}
As explained in loc.~cit., this integral converges for $-\Re(s)\gg1$ and
unfolds to give
\[
I(s;V_\eta,\phi)=\prod_v\widetilde{\Psi}_v(1-s;\rho(w_{n,m})
\widetilde{W}_{\eta_v},\widetilde{W}_{\phi_v}),
\]
where $\rho$ denotes right translation,
$w_{n,m}=\begin{psmallmatrix}1&\\&w_{n-m}\end{psmallmatrix}$, and
\[
\widetilde{\Psi}_v(s;W,W')=\int_{\U_m(F_v)\backslash\GL_m(F_v)}
\int_{M_{n-m-1,m}(F_v)}W\!\begin{pmatrix}h&&\\x&I_{n-m-1}&\\&&1\end{pmatrix}
W'(h)\|\det h\|_v^{s-\frac{n-m}{2}}\,dx\,dh.
\]
Further, the local $\epsilon$-factor $\epsilon(s,\pi_v\boxtimes\tau_v,\psi_v)$
is defined so that the following local functional equation
\cite[Theorem 3.2]{Cog} holds:
\begin{equation}\label{lfe}
\frac{\widetilde{\Psi}(1-s;\rho(w_{n,m})\widetilde{W}_{\eta_v},\widetilde{W}_{\phi_v})}
{L(1-s,\widetilde{\pi}_v\boxtimes\widetilde{\tau}_v)}
=\omega_{\tau_v}(-1)^{n-1}\epsilon(s,\pi_v\boxtimes\tau_v,\psi_v)
\frac{\Psi_v(s;W_{\eta_v},W_{\phi_v})}{L(s,\pi_v\boxtimes\tau_v)}.
\end{equation}
If $\tau=\mathbf{1}$ is the trivial representation of $\GL_1(\A_F)$, then
for each $v$, we write $L(s,\pi_v)$ and $\epsilon(s,\pi_v,\psi_v)$
to denote $L(s,\pi_v\times\mathbf{1}_v)$ and
$\epsilon(s,\pi_v\times\mathbf{1}_v,\psi_v)$, respectively. These are known
to be the same as the standard local factors of Godement--Jacquet attached
to the representation $\pi_v$ and the character $\psi_v$.

With $\pi\cong\bigotimes\pi_v$ as above, let us now recall the notion of
the \emph{essential vector} of $\pi_v$. For each finite $v$, according
to \cite{JPSS} (see also \cite{J1}), there is a unique positive integer
$m(\pi_v)$ such that the space of $K_1\bigl(\p_v^{m(\pi_v)}\bigr)$-fixed
vectors is of dimension $1$.  Further, as mentioned in the introduction,
by loc.~cit., there is a unique vector $\xi_v^0$ in this space, called the
essential vector, with the associated essential function
$W_{\xi_v^0}=W_{\pi_v}$ in ${\mathcal W}(\pi_v,\psi_v)$
satisfying the condition
$W_{\xi_v^0}\begin{psmallmatrix}gh&\\&1\end{psmallmatrix}
=W_{\xi_v^0}\begin{psmallmatrix}g&\\&1\end{psmallmatrix}$
for all $h\in\GL_{n-1}(\o_v)$ and $g\in\GL_{n-1}(F_v)$. For $v<\infty$
and any unramified representation $\tau_v$ of $\GL_m(F_v)$, let
$W_{\tau_v}^{0,\psi_v}\in W(\tau_v,\psi_v)$ denote the normalized
spherical function (cf.~\cite[p.~2]{J1}).  If $m(\pi_v)=0$
then by uniqueness of essential functions one has the equality
$W_{\xi_v^0}=W_{\pi_v}^{0,\psi_v}$ in $W(\pi_v,\psi_v)$. The integral
ideal $\n=\prod_{v<\infty}(\p_v\cap\o_F)^{m(\pi_v)}\subseteq\o_F$
is called the \emph{conductor} of $\pi$. We write $\nid$ to denote a
finite id\`ele corresponding to $\n$. In fact, we may fix $\nid$ so that
$\nid_v=\varpi_v^{m(\pi_v)}$ for all finite $v$. The integer $m(\pi_v)$
can also be characterized as the degree of the monomial in the local
$\epsilon$-factor $\epsilon(s,\pi_v,\psi_v)$ \cite{JPSS}; we write
\begin{equation}\label{rn}
\epsilon(s,\pi_v,\psi_v)=\epsilon(\pi_v,\psi_v)q_v^{m(\pi_v)(\frac12-s)}.
\end{equation}

A crucial property of the conductor is that $K_0(\n)$ acts on
the space of $K_1(\n)$-fixed vectors via the central character
$\omega=\omega_\pi$ (cf.~\cite[Section 8]{CPS1}). Precisely, for
$g_v=(g_{i,j})\in K_{0,v}\bigl(\p_v^{m(\pi_v)}\bigr)$, define
\[
\chi_{\pi_v}(g_v)=\begin{cases}
1&\text{if }m(\pi_v)=0,\\
\omega_v(g_{n,n})&\text{if }m(\pi_v)>0,
\end{cases}
\]
and put $\chi_\pi=\bigotimes_{v<\infty}\chi_{\pi_v}$. Let
$\xi_f^0=\bigotimes_{v<\infty}\xi_v^0$, which forms a basis for the space
of $K_1(\n)$-fixed vectors in $V_\pi$. Then it is shown in loc.~cit.\
that $\chi_\pi$ is a character of $K_0(\n)$ trivial on $K_1(\n)$,
and that for any finite $v$,
\[
\pi_v(g)\xi_v^0=\chi_{\pi_v}(g)\xi_v^0
\quad\text{for all }g\in K_{0,v}\bigl(\p_v^{m(\pi_v)}\bigr).
\]
For each $j$, $\chi_\pi$ also determines a character of $\Gamma_{0,j}(\n)$
that is trivial on $\Gamma_{1,j}(\n)$, which we continue to denote
by $\chi_\pi$.

We will require the next two lemmas later in \S\ref{fe}. To
that end, for any $v$ and any $\xi_v$ in the space of $\pi_v$, put
\begin{equation}\label{tildedef}
\tilde{\xi}_v=\epsilon(\pi_v,\psi_v)^{1-n}\pi_v
\begin{psmallmatrix}\nid_vI_{n-1}&\\&1\end{psmallmatrix}\xi_v.
\end{equation}
(Note that for $v\mid\infty$, $\nid_v=1$.) First, we identify the essential
function $W_{\widetilde{\pi}_v}\in\W(\widetilde{\pi}_v,\psi_v^{-1})$ associated
to the contragredient representation $\widetilde{\pi}_v$ in the following lemma.
\begin{lemma}\label{dnf}
For $v<\infty$,
\[
\widetilde{W}_{\tilde{\xi}_v^0}(g)=W_{\widetilde{\pi}_v}(g).
\]
\end{lemma}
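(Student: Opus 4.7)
The strategy is to combine uniqueness of the essential function with a normalization calculation. The essential function $W_{\widetilde\pi_v}$ is characterized, up to scalar, as the unique nonzero element of $\W(\widetilde\pi_v,\psi_v^{-1})$ right-invariant under $K_{1,v}\bigl(\p_v^{m(\widetilde\pi_v)}\bigr)$. Since $m(\widetilde\pi_v)=m(\pi_v)$, the same congruence subgroup appears on both sides. It therefore suffices to show that $\widetilde W_{\tilde\xi_v^0}$ lies in this one-dimensional invariant space, and then to fix the proportionality constant.

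For the invariance step, I unfold the definition $\tilde\xi_v^0=\epsilon(\pi_v,\psi_v)^{1-n}\pi_v(b_v)\xi_v^0$, with $b_v=\begin{psmallmatrix}\nid_vI_{n-1}&\\&1\end{psmallmatrix}$, to obtain
\[
\widetilde W_{\tilde\xi_v^0}(g)=\epsilon(\pi_v,\psi_v)^{1-n}\,W_{\xi_v^0}\bigl(w_n\,{}^tg^{-1}\,b_v\bigr),
\]
and then check right-$K_{1,v}\bigl(\p_v^{m(\pi_v)}\bigr)$-invariance directly. The key observation is that $k\mapsto w_n\,{}^tk^{-1}w_n^{-1}$ sends $K_{1,v}\bigl(\p_v^{m(\pi_v)}\bigr)$ (last-row condition) into the ``mirror'' congruence subgroup of matrices with first column $\equiv e_1\pmod{\p_v^{m(\pi_v)}}$, and right-conjugation by $b_v$ then rewrites this as a $K_{0,v}\bigl(\p_v^{m(\pi_v)}\bigr)$-type condition, on which $\xi_v^0$ transforms by the character $\chi_{\pi_v}$. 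The prefactor $\epsilon(\pi_v,\psi_v)^{1-n}$ compensates for the central-character factor coming from the determinant of $b_v$. This yields $\widetilde W_{\tilde\xi_v^0}=c\cdot W_{\widetilde\pi_v}$ for some $c\in\C^\times$.

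For the normalization, I apply the local functional equation \eqref{lfe} with $m=1$ and $\tau_v=\mathbf{1}$, the trivial character of $\GL_1(F_v)$. Using the defining identities $\Psi_v(s;W_{\xi_v^0},1)=L(s,\pi_v)$ and $\Psi_v(s;W_{\widetilde\pi_v},1)=L(s,\widetilde\pi_v)$ for the essential function in the $\GL_n\times\GL_1$ setting, together with the explicit shape \eqref{rn} of $\epsilon(s,\pi_v,\psi_v)$ and an elementary change of variables $h\mapsto\nid_v/h$ in the resulting $\GL_1$-integral, the $\epsilon$-factors and powers of $q_v$ cancel to give $c=1$.

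The main obstacle will be the first step, namely the careful interplay between the operation $k\mapsto w_n\,{}^tk^{-1}$ and the right-conjugation by $b_v$: one must verify that, for every $k\in K_{1,v}\bigl(\p_v^{m(\pi_v)}\bigr)$, the resulting correction element lands in $K_{0,v}\bigl(\p_v^{m(\pi_v)}\bigr)$ with a $\chi_{\pi_v}$-value that is exactly cancelled by the $\epsilon(\pi_v,\psi_v)^{1-n}$ prefactor. Without the precise $\nid_v$-twist and $\epsilon$-factor built into $\tilde\xi_v^0$, this would fail. The second step is then a routine normalization calculation.
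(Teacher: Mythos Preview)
Your invariance step is essentially the same as the paper's, though your remark that ``the prefactor $\epsilon(\pi_v,\psi_v)^{1-n}$ compensates for the central-character factor'' is confused: that scalar plays no role in the $K_{1,v}$-invariance check, which is purely a matrix calculation. This is minor.

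The normalization step, however, has a real gap. The paper does \emph{not} use $m=1$; it takes $\tau_v$ to be an unramified principal series on $\GL_{n-1}$, and this choice is not incidental. Three things go wrong with your $m=1$ proposal:
\begin{itemize}
\item The twist $b_v=\begin{psmallmatrix}\nid_v I_{n-1}&\\&1\end{psmallmatrix}$ acts on $n-1$ diagonal entries. In the $\GL_{n-1}$-integral the change $g\mapsto g(\nid_v^{-1}I_{n-1})$ absorbs it completely; in a $\GL_1$-integral your substitution $h\mapsto\nid_v/h$ only touches one entry, leaving the middle block $\nid_v I_{n-2}$ in place. You would then need explicit values of $W_{\xi_v^0}$ on non-identity torus elements, which is far from an ``elementary change of variables.''
\item For $m<n-1$ the dual integral $\widetilde\Psi$ carries an extra integration over $M_{n-m-1,m}(F_v)$; for $m=1$ this is an $(n-2)$-dimensional unipotent integral that you do not address. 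Only for $m=n-1$ does $\widetilde\Psi$ collapse to $\Psi$, which is what allows the paper to invoke the essential-function identity on the dual side directly.
\item The $\epsilon$-count is wrong. The prefactor $\epsilon(\pi_v,\psi_v)^{1-n}$ is tailored to cancel the product $\epsilon(s,\pi_v\boxtimes\tau_v,\psi_v)=\prod_{j=1}^{n-1}\epsilon(s+it_j,\pi_v,\psi_v)$ arising from the $\GL_n\times\GL_{n-1}$ functional equation. With $m=1$ you get a single $\epsilon(s,\pi_v,\psi_v)$, leaving an uncancelled factor of $\epsilon(\pi_v,\psi_v)^{2-n}$ for $n\ge3$.
\end{itemize}
So the skeleton of your argument is right, but the normalization must be carried out with $m=n-1$ as in the paper; the $m=1$ route does not close.
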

\begin{proof}
It is a straightforward calculation to check that the function
$g\mapsto\widetilde{W}_{\tilde{\xi}_v^0}(g)$ is right
$K_{1,v}(\p_v^{m(\pi_v)})$-invariant. Indeed one checks that
\[
\widetilde{W}_{\tilde{\xi}_v^{0}}(gk)
=\chi_{\pi_v}(k)^{-1}\widetilde{W}_{\tilde{\xi}_v^0}(g),
\quad\text{for }k\in K_{0,v}(\p_v^{m(\pi_v)}).
\]
Since the space of such functions in $\W(\widetilde{\pi}_v,\psi_v^{-1})$
is $1$-dimensional, there exists a constant $c$ so that
$\widetilde{W}_{\tilde{\xi}_v^0}(g)=cW_{\widetilde{\pi}_v}(g)$.

It remains to show that $c=1$. For that, we apply the
local functional equation \eqref{lfe} with $\eta_v=\tilde{\xi}_v^0$
and $W_{\phi_v}=W_{\tau_v}^{0,\overline{\psi}_v}$ the normalized
spherical function (with respect to $\psi_v^{-1}$) corresponding to
the unramified (unitary) principal series representation
$\tau_v=\Ind(\chi_1,,\ldots,\chi_{n-1})$ of $\GL_{n-1}(F_v)$. Here
the $\chi_j$ are unramified characters of $F_v^\times$ of the form
$\chi_j(x)=\|x\|_v^{it_j}$ for $t_j\in\R$. With these choices, since
$W_{\tilde{\xi}_v^0}=cW_{\widetilde{\pi}_v}$ and $W_{\widetilde{\pi}_v}$
is the essential function for the representation $\widetilde{\pi}_v$,
the left-hand side of \eqref{lfe} reduces to the following equality:
\[
c=\epsilon(s,\pi_v\boxtimes\tau_v,\psi_v)
\frac{\Psi_v(s;W_{\tilde{\xi}_v^0},W^{0,\overline{\psi}_v}_{\tau_v})}
{L(s,\pi_v\boxtimes\tau_v)}.
\]
Through the change of variable $g\mapsto g(\nid_v^{-1}I_{n-1})$ in the integral
defining $\Psi_v(s;W_{\tilde{\xi}_v^0},W^{0,\overline{\psi}_v}_{\tau_v})$,
one checks that
\[
\Psi_v(s;W_{\tilde{\xi}_v^0},W^{0,\overline{\psi}_v}_{\tau_v})
=\omega_{\tau_v}^{-1}(\nid_v)\|\nid_v^{-1}\|_v^{s-\frac12}\epsilon(\pi_v,\psi_v)^{1-n}
\Psi_v(s;W_{\xi_v^0},W^{0,\overline{\psi}_v}_{\tau_v}).
\]
On the other hand, according to \eqref{unr3},
$\Psi_v(s;W_{\xi_v^0},W^{0,\overline{\psi}_v}_{\tau_v})
=L(s,\pi_v\boxtimes\tau_v)$. We also have
\[
\epsilon(s,\pi_v\boxtimes\tau_v,\psi_v)
=\prod_{j=1}^{n-1}\epsilon(s+it_j,\pi_v,\psi_v)
\]
and $\omega_{\tau_v}(\nid_v)=q_v^{m(\pi_v)\sum_jit_j}$.
Putting these together we obtain $c=1$.
\end{proof}

Let $\xi$ be a pure tensor in the space of $\pi$
such that $\xi_v=\xi_v^0$ for all $v<\infty$.
We also define its dual
counterpart; namely, for $W'=\widetilde{W}_{\tilde{\xi}_\infty}\times
\prod_{v<\infty}W_{\widetilde{\pi}_v}\in\W(\widetilde{\pi},\psi^{-1})$,
let
\[
U_\xi'(g)=\sum_{\gamma\in\U_{n-1}(F)\backslash\GL_{n-1}(F)}
W'\!\left(\begin{pmatrix}\gamma&\\&1\end{pmatrix}g\right).
\]

\begin{lemma}\label{aut-dual}
Suppose $\pi$ is a cuspidal automorphic representation of $\GL_n(\A_F)$
and $\xi$ is a pure tensor in the space of $\pi$ such that
$\xi_v=\xi_v^0$ for all finite $v$. Then
$$
U_\xi\!\left(
{}^tg^{-1}\begin{pmatrix}\nid I_{n-1}\\&&1\end{pmatrix}\right)
=\epsilon_\pi^{n-1}U_\xi'(g),
$$
where $\epsilon_\pi=\prod_v\epsilon(\pi_v,\psi_v)$.
\end{lemma}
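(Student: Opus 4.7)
The plan is to reduce the identity to a purely local calculation. Since $\pi$ is cuspidal, Proposition~\ref{prop-CPS} gives $U_\xi=V_\xi$; combining this with the relation $V_\eta(g)=\widetilde U_\eta({}^tg^{-1})$ recorded just before that proposition, and with the diagonality of $A:=\begin{psmallmatrix}\nid I_{n-1}&\\&1\end{psmallmatrix}$ (which makes ${}^t\bigl({}^tg^{-1}A\bigr)^{-1}=gA^{-1}$), yields
\[
U_\xi\!\left({}^tg^{-1}A\right)=\widetilde U_\xi(gA^{-1}).
\]
The task is then to match the right-hand side with $\epsilon_\pi^{n-1}U_\xi'(g)$.

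Expanding $\widetilde U_\xi$ as a sum over $\gamma\in\U_{n-1}(F)\backslash\GL_{n-1}(F)$ and using that $A$ commutes with every matrix of the form $\begin{psmallmatrix}\gamma&\\&1\end{psmallmatrix}$, the factorization of $\widetilde W_\xi$ over places (together with $\nid_v=1$ for $v\mid\infty$) gives
\[
\widetilde W_\xi\!\left(\begin{pmatrix}\gamma&\\&1\end{pmatrix}gA^{-1}\right)=\widetilde W_{\xi_\infty}\!\left(\begin{pmatrix}\gamma&\\&1\end{pmatrix}g_\infty\right)\prod_{v<\infty}\widetilde W_{\xi_v^0}\!\left(\begin{pmatrix}\gamma&\\&1\end{pmatrix}g_vA_v^{-1}\right).
\]

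The main step is to apply Lemma~\ref{dnf} at each finite $v$. A short calculation using the diagonality of $A_v$ and the defining identity $\widetilde W(h)=W(w_n{}^th^{-1})$ gives $\widetilde W_{\xi_v^0}(hA_v^{-1})=W_{\xi_v^0}(w_n{}^th^{-1}A_v)$; combined with the definition~\eqref{tildedef} of $\tilde\xi_v^0$, Lemma~\ref{dnf} identifies this quantity with $\epsilon(\pi_v,\psi_v)^{n-1}W_{\widetilde\pi_v}(h)$. At the archimedean place \eqref{tildedef} and $\nid_\infty=1$ give $\widetilde W_{\xi_\infty}=\epsilon(\pi_\infty,\psi_\infty)^{n-1}\widetilde W_{\tilde\xi_\infty}$, supplying the missing factor. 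The epsilon factors then assemble over all places into $\epsilon_\pi^{n-1}$, while the remaining product of Whittaker values equals $W'\!\bigl(\begin{psmallmatrix}\gamma&\\&1\end{psmallmatrix}g\bigr)$. Summing over $\gamma$ produces $\epsilon_\pi^{n-1}U_\xi'(g)$ as required.

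I anticipate no substantive obstacle; the care needed is purely bookkeeping. The subtlest point is keeping track of transposes and inverses when converting between $U_\xi$, $V_\xi$, and $\widetilde U_\xi$, so that the matrix argument in $\widetilde U_\xi$ comes out as $gA^{-1}$ rather than some incorrect twist, and separately confirming that the sign of the exponent of the epsilon factor produced by Lemma~\ref{dnf} is $n-1$ and not $1-n$ after unwinding all of the inversions.
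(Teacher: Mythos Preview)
Your proposal is correct and follows essentially the same approach as the paper: both arguments use Proposition~\ref{prop-CPS} to pass from $U_\xi$ to $\widetilde U_\xi$, then invoke Lemma~\ref{dnf} at the finite places together with the definition~\eqref{tildedef} at infinity to assemble the factor $\epsilon_\pi^{n-1}$. The only cosmetic difference is the order of operations: the paper first absorbs the right translation by $A=\begin{psmallmatrix}\nid I_{n-1}&\\&1\end{psmallmatrix}$ into an auxiliary vector $\xi'=\xi_\infty\otimes\bigotimes_{v<\infty}\tilde\xi_v^0$ and then applies Proposition~\ref{prop-CPS} to $\xi'$, whereas you apply Proposition~\ref{prop-CPS} to $\xi$ first and handle $A^{-1}$ afterward at each local factor; this is a harmless reordering.
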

\begin{proof}
Keeping the above notation, let
$\xi'=\xi_\infty\otimes\bigotimes_{v<\infty}\tilde{\xi}_v^0$. Then
\begin{align*}
U_\xi\!\left({}^tg^{-1}
\begin{pmatrix}\nid I_{n-1}\\&&1\end{pmatrix}\right)
&=\prod_{v<\infty}\epsilon(\pi_v,\psi_v)^{n-1}
\cdot U_{\xi'}\bigl({}^tg^{-1}\bigr)\\
&=\prod_{v<\infty}\epsilon(\pi_v,\psi_v)^{n-1}
\cdot\widetilde{U}_{\xi'}(g)\\
&=\epsilon_\pi^{n-1}U_\xi'(g).
\end{align*}
Here, the first equality is a consequence of the definition of $U_{\xi'}$,
the second equality follows from Proposition~\ref{prop-CPS}, and the
last equality follows from Lemma~\ref{dnf}.
\end{proof}

We will also need a certain auxiliary function related to the local
Rankin--Selberg $L$-factor $L(s,\pi_v\boxtimes\tau_v)$. In order to
introduce this we pass to a more general setting and take $\pi_v$ (resp.\
$\tau_v$) to be an irreducible admissible representation of $\GL_n(F_v)$
(resp.\ $\GL_m(F_v)$), not necessarily the local component of a global
representation. It is known that the local $L$-function $L(s,\pi_v)$
is of the form $P_{\pi_v}(q_v^{-s})^{-1}$, where $P_{\pi_v}\in\C[X]$ has
degree at most $n$ and satisfies $P_{\pi_v}(0)=1$.  We may then find $n$
complex numbers $\{\alpha_{v,i}\}$ (allowing some of them to be zero)
such that
\[
L(s,\pi_v)=\prod_{i=1}^n(1-\alpha_{v,i}q_v^{-s})^{-1}.
\]
We call the set $\{\alpha_{v,i}\}$ the \emph{Langlands parameters}
of $\pi_v$; if $\pi_v$ is spherical, they are the usual Satake
parameters. Let $\{\beta_{v,j}\}$ be the Langlands parameters of
$\tau_v$, and set
\[
L(s,\pi_v\times\tau_v)=\prod_{i,j}(1-\alpha_{v,i}\beta_{v,j}q_v^{-s})^{-1}.
\]
Of course, $L(s,\pi_v\times\tau_v)=L(s,\pi_v\boxtimes\tau_v)$
if both $\pi_v$ and $\tau_v$ are spherical.

In the following lemma, we explore the connection between $L(s,\pi_v\times\tau_v)$ and
$L(s,\pi_v\boxtimes\tau_v)$ in more detail, but let us first recall
the full classification of irreducible admissible representations
of $\GL_n(F_v)$. Let $\AA_n$ denote the set of equivalence classes
of such representations and put $\AA=\bigcup\AA_n$. The essentially
square-integrable representations of $\GL_n(F_v)$ have been classified
by Bernstein and Zelevinsky; they are given as follows: If $\sigma_v$
is an essentially square-integrable representation of $\GL_n(F_v)$,
then there is divisor $a\mid n$ and a supercuspidal representation
$\eta_v$ of $\GL_a(F_v)$ such that if $b=\frac{n}{a}$ and
$Q$ is the standard (upper) parabolic subgroup of $\GL_n(F_v)$ of type
$(a,\ldots,a)$, then $\sigma_v$ can be realized as the unique quotient of
the (normalized) induced representation
\[
\Ind_Q^{\GL_n(F_v)}(\eta_v,\eta_v\|\cdot\|_v,\ldots,\eta_v\|\cdot\|_v^{b-1});
\]
the integer $a$ and the class of $\eta_v$ are uniquely determined by
$\sigma_v$. In short, $\sigma_v$ is parametrized by $b$ and $\eta_v$, and
we denote this by $\sigma_v=\sigma_b(\eta_v)$; further, $\sigma_v$ is
square-integrable (also called ``discrete series'') if and only if the
representation $\eta_v\|\cdot\|_v^{\frac{b-1}{2}}$ of $\GL_a(F_v)$ is unitary.

Now, let $P$ be an upper parabolic subgroup of $\GL_n(F_v)$
of type $(n_1,\ldots,n_r)$. For each $i=1,\ldots,r$,
let $\tau_{i,v}^0$ be a discrete series representation of
$\GL_{n_i}(F_v)$.  Let $(s_1,\ldots,s_r)$ be a sequence of
real numbers satisfying $s_1\geq\cdots\geq s_r$, and put
$\tau_{i,v}=\tau_{i,v}^0\otimes\|\cdot\|_v^{s_i}$ (an essentially
square-integrable representation), for $i=1,\ldots,r$. Then the induced
representation
\[
\xi_v=\Ind_P^{\GL_n(F_v)}(\tau_{1,v}\otimes\cdots\otimes\tau_{r,v})
\]
is said to be a representation of $\GL_n(F_v)$ of Langlands type. If
$\tau_v\in\AA_n$, then it is well known that it is uniquely representable
as the quotient of an induced representation of Langlands type. We
write $\tau_v=\tau_{1,v}\boxplus\cdots\boxplus\tau_{r,v}$ to
denote this realization of $\tau_v$. Thus one obtains a sum operation on
the set $\AA$ \cite[(9.5)]{JPSS3}. It follows easily from the definition
that $L(s,\pi_v\times\tau_v)$ is bi-additive:
\begin{align*}
L(s,\pi_v\times(\tau_v\boxplus\tau_v'))
&=L(s,\pi_v\times\tau_v)L(s,\pi_v\times\tau_v')\\
L(s,(\pi_v\boxplus\pi_v')\times\tau_v)
&=L(s,\pi_v\times\tau_v)L(s,\pi_v'\times\tau_v)
\end{align*}
for all $\pi_v,\pi_v',\tau_v,\tau_v'\in\AA$. The local factor
$L(s,\pi_v\boxtimes\tau_v)$ is also bi-additive in the above sense,
according to \cite[(9.5) Theorem]{JPSS3}.

\begin{lemma}\label{aux}
Given $v<\infty$, suppose $\pi_v\in\AA_n$ and $\tau_v\in\AA_m$. Then
\[
L(s,\pi_v\times\tau_v)=P(q_v^{-s})L(s,\pi_v\boxtimes\tau_v)
\]
for some polynomial $P\in\C[X]$.
\end{lemma}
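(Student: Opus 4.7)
The plan is to exploit the bi-additivity (with respect to the Langlands sum $\boxplus$) of both $L(s,\cdot\times\cdot)$ and $L(s,\cdot\boxtimes\cdot)$. Bi-additivity of the latter is quoted from \cite{JPSS3} in the text above; bi-additivity of the former follows directly from the definition, since $L(s,\pi_v)=\prod_iL(s,\pi_{i,v})$ when $\pi_v=\pi_{1,v}\boxplus\cdots\boxplus\pi_{r,v}$, so the multiset of Langlands parameters of a Langlands sum is the union of the Langlands parameters of its summands. Invoking the Langlands classification to write $\pi_v$ and $\tau_v$ as $\boxplus$-sums of essentially square-integrable representations therefore reduces the lemma to the case where $\pi_v=\sigma_b(\eta_v)$ and $\tau_v=\sigma_{b'}(\eta_v')$ are themselves essentially square-integrable.

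In this reduced setting, I would exploit the Godement--Jacquet computation of $L(s,\sigma_b(\eta_v))$: it equals $1$ unless the supercuspidal base $\eta_v$ is an unramified character $\chi$ of $F_v^\times$ (forcing $a=1$), in which case it equals $(1-\chi(\varpi_v)q_v^{-(b-1)/2-s})^{-1}$. Thus, if either $\eta_v$ or $\eta_v'$ fails to be an unramified character, the corresponding Langlands parameters are all zero, so $L(s,\pi_v\times\tau_v)=1$, and we may take $P(X)=1/L(s,\pi_v\boxtimes\tau_v)|_{q_v^{-s}=X}$, which is a polynomial by the very definition of the local Rankin--Selberg factor.

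The one genuinely nontrivial case is when both $\eta_v=\chi$ and $\eta_v'=\chi'$ are unramified characters. Here each of $\pi_v,\tau_v$ contributes a single nonzero Langlands parameter $\alpha=\chi(\varpi_v)q_v^{-(b-1)/2}$, $\beta=\chi'(\varpi_v)q_v^{-(b'-1)/2}$, so
\[
L(s,\pi_v\times\tau_v)=(1-\alpha\beta q_v^{-s})^{-1}.
\]
On the other hand, the JPSS formula for the Rankin--Selberg factor of two essentially square-integrable representations (in terms of their supercuspidal supports) gives, assuming without loss of generality $b\le b'$,
\[
L(s,\pi_v\boxtimes\tau_v)=\prod_{k=0}^{b-1}(1-\alpha\beta\,q_v^{k-s})^{-1},
\]
whose $k=0$ factor coincides with the unique factor of $L(s,\pi_v\times\tau_v)$. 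The ratio
\[
L(s,\pi_v\times\tau_v)/L(s,\pi_v\boxtimes\tau_v)=\prod_{k=1}^{b-1}(1-\alpha\beta\,q_v^{k-s})
\]
is then manifestly a polynomial in $q_v^{-s}$.

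The main obstacle I anticipate is the bookkeeping in this last case: one must match the Langlands shift $q_v^{-(b-1)/2}$ coming from Godement--Jacquet with the shift built into the Steinberg-type segment in the JPSS formula, so that the pole of $L(s,\pi_v\times\tau_v)$ really does appear (with its single simple factor) as the ``extremal'' pole among the $\min(b,b')$ poles of $L(s,\pi_v\boxtimes\tau_v)$. Once this sign/shift verification is carried out for the one nontrivial case, bi-additivity propagates the conclusion to arbitrary $\pi_v\in\AA_n$ and $\tau_v\in\AA_m$.
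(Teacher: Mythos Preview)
Your proposal is correct and follows essentially the same route as the paper: reduce via bi-additivity to essentially square-integrable $\pi_v,\tau_v$, dispose of the cases where one of them has trivial standard $L$-factor (so $L(s,\pi_v\times\tau_v)=1$), and in the remaining case use the explicit JPSS formula for $L(s,\sigma_b(\chi)\boxtimes\sigma_{b'}(\chi'))$ to see that the single pole of $L(s,\pi_v\times\tau_v)$ is one of the $\min(b,b')$ poles of the Rankin--Selberg factor. The only organizational difference is that the paper reduces $\pi_v$ first and then handles $\tau_v$ by induction on $m$, whereas you reduce both sides symmetrically in one stroke; the content is the same.

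One caveat on your bookkeeping: in the paper's convention $\sigma_b(\eta_v)$ is the quotient of $\Ind(\eta_v,\eta_v\|\cdot\|_v,\ldots,\eta_v\|\cdot\|_v^{b-1})$, so $L(s,\sigma_b(\chi))=L(s+b-1,\chi)$ and the single nonzero Langlands parameter is $\chi(\varpi_v)q_v^{-(b-1)}$, not $\chi(\varpi_v)q_v^{-(b-1)/2}$. With this adjustment your extremal-pole check goes through (it matches the $i=b-1$ factor in the paper's product rather than the $k=0$ factor in yours), and as you anticipated this is purely a shift/normalization issue.
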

\begin{proof}
Since $\pi_v$ is a sum of essentially square-integrable representations
and both $L(s,\pi_v\times\tau_v)$ and $L(s,\pi_v\boxtimes\tau_v)$
are additive with respect to this sum, it suffices to prove
the lemma for essentially square-integrable representations
$\pi_v$. So for the remainder of the proof we assume $\pi_v$
is an essentially square-integrable representation of the form
$\pi_v=\sigma_b(\eta_v)$.  We proceed by induction on $m$. If
$m=1$, then $\tau_v=\chi_v$ is a quasi-character of $F_v^\times$
and $L(s,\pi_v\boxtimes\chi_v)=L(s,\pi_v\otimes\chi_v)$, where
$\pi_v\otimes\chi_v$ is the representation of $\GL_n(F_v)$ defined by
$g\mapsto\pi_v(g)\chi_v(\det g)$. If $\chi_v$ is unramified, then
\[
L(s,\pi_v\otimes\chi_v)=L(s,\pi_v\times\chi_v)
\]
and consequently $P=1$; on the other hand if $\chi_v$ is
ramified then $L(s,\pi_v\times\chi_v)=1$, and the assertion follows
since $L(s,\pi_v\otimes\chi_v)^{-1}$ is a polynomial in $q_v^{-s}$.

We now assume $m>1$. Suppose $\tau_v$ is an essentially square-integrable
representation of $\GL_m(F_v)$, say $\tau_v=\sigma_{b'}(\eta_v')$,
$\eta_v'\in\AA_{a'}$ is supercuspidal, and $a'b'=m$. Then the  standard
$L$-factor $L(s,\tau_v)$ is given by $L(s,\tau_v)=L(s+b'-1,\eta_v')$
(see \cite{JPSS3}). Consequently, $L(s,\tau_v)=1$ unless $a'=1$ and
$\eta_v'=\chi_v$ is an unramified quasi-character of $F_v^\times$.
However, if $L(s,\tau_v)=1$ then $L(s,\pi_v\times\tau_v)=1$, and the assertion
of the lemma follows. Hence we may assume $\tau_v=\sigma_m(\chi_v)$
for an unramified quasi-character $\chi_v$ of $F_v^\times$, in which case
\begin{equation}\label{e1}
\begin{aligned}
L(s,\pi_v\times\tau_v)&=L(s,\pi_v\otimes\chi_v\|\cdot\|_v^{m-1})\\
&=L(s,\sigma_b(\eta_v)\otimes\chi_v\|\cdot\|_v^{m-1})\\
&=L(s+m-1+b-1,\eta_v\otimes\chi_v).
\end{aligned}
\end{equation}
On the other hand, it also follows from \cite[(8.2) Theorem]{JPSS3} that
\begin{equation}\label{e2}
L(s,\pi_v\boxtimes\tau_v)=\begin{cases}
\prod_{j=0}^{m-1}L(s+j+b-1,\eta_v\otimes\chi_v)&\text{if }m\leq n,\\
\prod_{i=0}^{b-1}L(s+m-1+i,\eta_v\otimes\chi_v)&\text{if }m>n.
\end{cases}
\end{equation}
Now, from \eqref{e1} and \eqref{e2}, one sees that the ratio
$\frac{L(s,\pi_v\times\tau_v)}{L(s,\pi_v\boxtimes\tau_v)}$ is a polynomial
in $q_v^{-s}$, thus proving the lemma when $\tau_v$ is an essentially
square-integrable representation.

If $\tau_v$ is not essentially square-integrable, then as discussed
above there is a partition $\sum_{i=1}^k m_i=m$, with each $m_i<m$,
and essentially square-integrable representations $\tau_{i,v}$ of
$\GL_{m_i}(F_v)$, $i=1,\ldots,k$, so that $\tau_v=\bigboxplus_i\tau_{i,v}$.
We may then use additivity and apply the induction hypothesis to get
the desired conclusion.
\end{proof}

\begin{corollary}
If $L(s,\pi_v\boxtimes\tau_v)=1$, then either $L(s,\pi_v)=1$
 or $L(s,\tau_v)=1$.
\end{corollary}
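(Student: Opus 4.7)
The plan is to reduce the corollary to a short combinatorial observation about Langlands parameters via Lemma~\ref{aux}. First, I would apply Lemma~\ref{aux} to write
\[
L(s,\pi_v\times\tau_v)=P(q_v^{-s})L(s,\pi_v\boxtimes\tau_v)
\]
for some $P\in\C[X]$. Under the hypothesis $L(s,\pi_v\boxtimes\tau_v)=1$, this collapses to the assertion that $L(s,\pi_v\times\tau_v)=P(q_v^{-s})$ is itself a polynomial in $q_v^{-s}$.

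Next, I would invoke the explicit product formula in terms of the Langlands parameters $\{\alpha_{v,i}\}_{i=1}^n$ and $\{\beta_{v,j}\}_{j=1}^m$ of $\pi_v$ and $\tau_v$:
\[
L(s,\pi_v\times\tau_v)=\prod_{i,j}(1-\alpha_{v,i}\beta_{v,j}q_v^{-s})^{-1}.
\]
Setting $X=q_v^{-s}$, the previous step yields the polynomial identity $P(X)\prod_{i,j}(1-\alpha_{v,i}\beta_{v,j}X)=1$ in $\C[X]$. Since the only units in $\C[X]$ are nonzero constants, each factor $(1-\alpha_{v,i}\beta_{v,j}X)$ must be the constant $1$; equivalently, $\alpha_{v,i}\beta_{v,j}=0$ for every pair $(i,j)$.

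The dichotomy is then a one-line case split: if all $\alpha_{v,i}=0$, then $L(s,\pi_v)=\prod_i(1-\alpha_{v,i}q_v^{-s})^{-1}=1$; otherwise, fixing an $i_0$ with $\alpha_{v,i_0}\neq0$ forces $\beta_{v,j}=0$ for every $j$, whence $L(s,\tau_v)=1$. All the real work has already been done in Lemma~\ref{aux}; the only potential subtlety is to confirm that the product representation of $L(s,\pi_v\times\tau_v)$ applies even when some Langlands parameters vanish, which is immediate from the definition recalled just before Lemma~\ref{aux}. Thus there is no serious obstacle, and the corollary is essentially a direct consequence of Lemma~\ref{aux} together with unique factorization in $\C[X]$.
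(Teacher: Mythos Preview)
Your proposal is correct and follows essentially the same approach as the paper's proof: apply Lemma~\ref{aux} to see that $L(s,\pi_v\times\tau_v)$ is a polynomial in $q_v^{-s}$, deduce via unique factorization in $\C[X]$ that it equals $1$ (i.e.\ all $\alpha_{v,i}\beta_{v,j}=0$), and conclude by the obvious dichotomy on the Langlands parameters. The paper compresses this into two sentences, but the underlying logic is identical to what you have written out.
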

\begin{proof}
If $L(s,\pi_v\boxtimes\tau_v)=1$, then Lemma~\ref{aux} implies that
$L(s,\pi_v\times\tau_v)$ is a polynomial in $q_v^{-s}$ and hence must
be $1$. This in turn implies the conclusion.
\end{proof}

\section{$\GL_3\times\GL_2$}\label{GL2}
We keep the notation of the previous section and take $n=3$, $m=2$.
Thus $\pi$ is an irreducible admissible generic representation of
$\GL_3(\A_F)$ with conductor $\n$ and central character $\omega_\pi$, and
$\tau$ is an automorphic subrepresentation of $\GL_2(\A_F)$ with central
character $\omega_\tau$. We fix nonzero integral ideals $\q'\subseteq\q$ and
consider only those $\tau$ of conductor $\q$.

Choose ideals $\a_j\subseteq\o_F$ representing the ideal classes
of $F$ so that $\prod_j\a_j$ is coprime to $\q'$. Let $\aid_j$ be a
finite id\`ele such that $\a_j=(\aid_j)$, and take $\aid_1=1$.  For each
$j$, put $h_j=\begin{psmallmatrix}\aid_j&\\&1\end{psmallmatrix}$ and
$g_j=\begin{psmallmatrix}h_j&\\&1\end{psmallmatrix}\in\GL_3(\A_{F,f})$.
Then $\{g_j\}$ is a valid set of representatives in \eqref{sa} (with
$n=3$). In what follows, we will also consider strong approximation for
$\GL_2(\A_F)$, i.e.\
$$
\GL_2(\A_F)=\coprod_{j=1}^h\GL_2(F)h_j\GL_2(F_\infty)K,
$$
where the compact open subgroup $K\subset\GL_2(\A_{F,f})$ is
either $K_f=\GL_2(\o_F)$, $K_1(\q)$ or $K_0(\q)$. We then have
the group $G_j=\Gamma_{1,j}(\o_F)=\Gamma_{0,j}(\o_F)$ along with
the congruence subgroups $\Gamma_{0,j}(\q)$, $\Gamma_{1,j}(\q)$ and
$\Gamma_j(q)$. Explicitly,
$$
G_j=\left\{\begin{pmatrix}a&b\\c&d\end{pmatrix}:
a,d\in\o_F, c\in\a_j^{-1}, b\in\a_j, ad-bc\in\o_F^\times\right\},
$$
which is precisely the stabilizer (acting on the right)
of the lattice $\o_F\oplus\a_j\subset F\oplus F$, and
\begin{align*}
\Gamma_{0,j}(\q)&=
\left\{\begin{pmatrix}a&b\\c&d\end{pmatrix}\in G_j:c\in\q\a_j^{-1}\right\},\\
\Gamma_{1,j}(\q)&=
\left\{\begin{pmatrix}a&b\\c&d\end{pmatrix}\in\Gamma_{0,j}(\q):
d-1\in\q\right\},\\
\Gamma_j(\q)&=\left\{\begin{pmatrix}a&b\\c&d\end{pmatrix}\in\Gamma_{1,j}(\q):
a-1\in\q, b\in\q\a_j\right\}.
\end{align*}
Note that
$\Gamma_j(\q)\subseteq\Gamma_{1,j}(\q)\subseteq\Gamma_{0,j}(\q)\subseteq G_j$,
and that $\Gamma_j(\q)$ is normal in $G_j$.

For any $\alpha\in F\hookrightarrow F_\infty$ embedded diagonally, we form
the ad\`ele $(\alpha,0)\in F_\infty\times\A_{F,f}$ which we continue to
denote by $\alpha$. In this way, we view $\beta=(\beta_1,\beta_2)\in F^2$
as an element of $\A_F^2$. For $\xi_\infty\in V_{\pi_\infty}$, note
that
\[
h\mapsto U_{\xi_\infty\otimes\xi_f^0}
\begin{pmatrix}h&\\&1\end{pmatrix},\quad\text{for }h\in\GL_2(\A_F),
\]
is a rapidly decreasing automorphic function on $\GL_2(\A_F)$ (cf.\
\cite[Theorem 1.1]{Cog}) which allowed us to consider integrals of
the type \eqref{rs} while defining $\Lambda(s,\pi\boxtimes\tau)$. We
now follow \cite{B-Kr3} in order to define \emph{additive twists}
(see loc.~cit.) in the current context. To that end, we modify
the above function by inserting a unipotent element. To be precise,
with $\beta=(\beta_1,\beta_2)$ as above, consider the function
\[
h\mapsto U_{\xi_\infty\otimes\xi_f^0}\!
\begin{pmatrix}h&{}^t\beta\\&1\end{pmatrix},\quad\text{for }h\in\GL_2(\A_F);
\]
it is rapidly decreasing (by the gauge estimates in loc.~cit.), but as
one can easily check, it is not in general $\GL_2(F)$-invariant, i.e., not an
automorphic function on $\GL_2(\A_F)$.

\subsection{The function $\Phi_{\xi_\infty,j}$ and its Fourier expansion}\label{fourier}
For each $j=1,\ldots,h$, put
\[
\Phi_{\xi_\infty,j}(g)=U_{\xi_\infty\otimes\xi_f^0}((g,g_j)),
\quad\text{for }g\in\GL_n(F_\infty),
\]
and for $\beta_1\in\a\q^{-1}$ and $\beta_2\in\q^{-1}$, consider the function
\begin{equation}\label{phi-j}
h\mapsto
\Phi_{\xi_\infty,j}\!\begin{pmatrix}h&{}^t\beta\\&1\end{pmatrix},
\quad\text{for }h\in\GL_2(F_\infty).
\end{equation}
Our goal in this section is to derive its Fourier series expansion (see
\eqref{exp3}), which
will also reveal it to be (classical) automorphic form on
$\GL_2(F_\infty)$. For $k\in\{1,\ldots,h\}$, choose distinct prime ideals
$\p_k$ and $\p_k'$ such that $\p_k\sim\q'\p_k'\sim\a_k$ and $\p_k$ is
coprime to $\q'\a_1\cdots\a_h$, and let $\alpha_k$ be a generator of the
principal fractional ideal $\p_k^{-1}\q'\p_k'$. Then for each fixed $j$,
$\alpha_k\a_j+\o_F=\p_k^{-1}$ runs through a set of representatives of the
class group of $F$. Let $\pid_k$ be a finite id\`ele such that $(\pid_k)=\p_k$.

\begin{lemma}
\label{clgp}
Let $R_1$, $R_2$ be sets of representatives for $F^\times/\o_F^\times$.
Then for any fixed $j$,
$$\left\{
\begin{pmatrix}\gamma_1\gamma_2&\\&\gamma_1\end{pmatrix}
\begin{pmatrix}1&\\\alpha_k&1\end{pmatrix}
:\gamma_1\in R_1, \gamma_2\in R_2, 1\le k\le h\right\}$$
is a set of representatives for $\U_2(F)\backslash\GL_2(F)/G_j$.
\end{lemma}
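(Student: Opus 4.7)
The plan is to pass through the bijection $\U_2(F)\backslash\GL_2(F)\leftrightarrow (F^2\setminus\{0\})\times F^\times$ sending $g\mapsto(v,D)$, where $v$ is the bottom row and $D=\det g$. The right $G_j$-action becomes $(v,D)\cdot h=(vh,D\det h)$ with $\det h\in\o_F^\times$, so the task reduces to enumerating $G_j$-orbits on $(F^2\setminus\{0\})\times F^\times$ and matching them with the proposed list.

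For the $v$-component, I would first show by a direct matrix computation, using the integrality constraints defining $G_j$ (applied both to $h$ and to $h^{-1}$), that the fractional ideal $I(v):=\o_F v_1+\a_j^{-1}v_2$ is a $G_j$-invariant. Conversely, two nonzero vectors with the same $I$-value lie in the same $G_j$-orbit; this amounts to saying that $G_j=\operatorname{Aut}_{\o_F}(L_j)$ acts transitively on primitive elements of the scaled lattice $I\cdot L_j$, which is a standard consequence of the structure theorem for projective modules over a Dedekind domain. A local computation using $(\alpha_k)=\p_k^{-1}\q'\p_k'$ together with the coprimality of $\p_k$ to $\q'\a_j$ yields $\o_F\alpha_k+\a_j^{-1}=\p_k^{-1}\a_j^{-1}$, so $I(\gamma_1(\alpha_k,1))=\gamma_1\p_k^{-1}\a_j^{-1}$. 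Since $[\p_k]=[\a_k]$ traverses $\Cl$ once as $k$ varies, the correct $k$ is forced by the ideal class of $I(v)$, after which $\gamma_1\in F^\times/\o_F^\times$ is determined by matching the fractional ideal itself.

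Once $v$ is normalized to $\gamma_1(\alpha_k,1)$, the residual ambiguity in $D$ is exactly $\det(H_k)$, where $H_k\subset G_j$ is the pointwise stabilizer of $(\alpha_k,1)$. I would parametrize $H_k$ via the rank-one identity $h=I_2+\begin{psmallmatrix}p&q\\-\alpha_k p&-\alpha_k q\end{psmallmatrix}$, giving $\det h=1+p-\alpha_k q$ subject to $p\in\o_F\cap(\alpha_k\a_j)^{-1}$ and $\alpha_k q\in\o_F\cap\alpha_k\a_j$. A prime-by-prime calculation identifies these intersections as $\p_k$ and $\q'\p_k'\a_j$ respectively, and the coprimality of $\p_k$ with $\q'\p_k'\a_j$ then gives $\p_k+\q'\p_k'\a_j=\o_F$, so $p-\alpha_k q$ exhausts $\o_F$ and $\det(H_k)=\o_F^\times$. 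Hence $\gamma_2:=D/\gamma_1^2\in F^\times/\o_F^\times$ is the final parameter, matching $\det\left(\begin{psmallmatrix}\gamma_1\gamma_2&\\\gamma_1\alpha_k&\gamma_1\end{psmallmatrix}\right)=\gamma_1^2\gamma_2$. Uniqueness of $(k,\gamma_1,\gamma_2)$ is immediate from how each is read off from the bottom row and determinant of $g$. The main obstacle is the transitivity of $G_j$ on vectors with a fixed $I$-value, which requires invoking Dedekind-module structure theory; once this is granted, the rest is bookkeeping built on the coprimality conditions arranged in the construction of the $\p_k$, $\p_k'$ and $\alpha_k$.
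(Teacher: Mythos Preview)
Your proposal is correct and follows essentially the same strategy as the paper: both arguments use the fractional ideal generated by the bottom row (your $I(v)=\o_F v_1+\a_j^{-1}v_2$ is $\a_j^{-1}$ times the paper's $I_j(M)=c\a_j+d\o_F$) to pin down $k$ and $\gamma_1$, and both reduce the determination of $\gamma_2$ to showing that the determinant can be adjusted by any element of $\p_k+\q'\p_k'\a_j=\o_F$ (the paper writes this as $\p_k+\alpha_k\a_j\p_k$, which is the same ideal). The one substantive difference is in how transitivity on bottom rows with a fixed $I$-value is established: you invoke the structure theory of projective modules over a Dedekind domain, whereas the paper constructs the required $g\in G_j$ by hand, solving $(\alpha_k\ 1)=(c\ d)\begin{psmallmatrix}A&B\\C&D\end{psmallmatrix}$ directly and then exploiting the non-uniqueness of the solution to fix the determinant. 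The paper's route is more self-contained; yours is cleaner once the structure theorem is granted, and your stabilizer parametrization $h=I_2+\begin{psmallmatrix}p&q\\-\alpha_k p&-\alpha_k q\end{psmallmatrix}$ with $\det h=1+p-\alpha_k q$ makes the determinant-adjustment step slightly more transparent than the paper's variation-of-solution argument.
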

\begin{proof}
For $M=\begin{psmallmatrix}a&b\\c&d\end{psmallmatrix}\in\GL_2(F)$,
let $I_j(M)$ denote the fractional ideal $c\a_j+d\o_F$. It is easy to
see that $I_j(M)$ depends only on the coset of $M$ in
$\U_2(F)\backslash\GL_2(F)/G_j$, i.e.\
$I_j(uMg)=I_j(M)$ for all $u\in\U_2(F)$, $g\in G_j$.
For a given $M$ there is a unique choice of
$k\in\{1,\ldots,h\}$ and $\gamma_1\in R_1$ such that
$I_j(M)=\gamma_1\p_k^{-1}$.
Thus, $I_j(\gamma_1^{-1}M)=\p_k^{-1}=\alpha_k\a_j+\o_F$.

Suppose that $\gamma_1^{-1}M$ has bottom row
$\begin{pmatrix}c&d\end{pmatrix}$.
Then $c\a_j+d\o_F=\alpha_k\a_j+\o_F$,
which in turn implies
$c\o_F+d\a_j^{-1}=\alpha_k\o_F+\a_j^{-1}$.
Thus, we have
\begin{equation}
\label{bottomrow}
\begin{pmatrix}\alpha_k&1\end{pmatrix}
=\begin{pmatrix}c&d\end{pmatrix}\begin{pmatrix}A&B\\C&D\end{pmatrix}
\end{equation}
for some $A\in\o_F$, $B\in\a_j$, $C\in\a_j^{-1}$, $D\in\o_F$.
It follows that the determinant $AD-BC$ is an element of $\o_F$,
but it need not be a unit.  However, the choice of
$\begin{psmallmatrix}A&B\\C&D\end{psmallmatrix}$
in \eqref{bottomrow} is not unique. For any particular solution
$\begin{psmallmatrix}A&B\\C&D\end{psmallmatrix}$,
it is straightforward to see that
$\begin{psmallmatrix}A'&B'\\C'&D'\end{psmallmatrix}$
is another solution if and only if
$\begin{psmallmatrix}A'\\C'\end{psmallmatrix}=
\begin{psmallmatrix}A\\C\end{psmallmatrix}
+s\begin{psmallmatrix}d\\-c\end{psmallmatrix}$
and
$\begin{psmallmatrix}B'\\D'\end{psmallmatrix}=
\begin{psmallmatrix}B\\D\end{psmallmatrix}
+t\begin{psmallmatrix}-d\\c\end{psmallmatrix}$
for some $s\in\p_k$ and $t\in\a_j\p_k$. Thus,
$$
\det\begin{pmatrix}A'&B'\\C'&D'\end{pmatrix}
-\det\begin{pmatrix}A&B\\C&D\end{pmatrix}
=s\det\begin{pmatrix}d&B\\-c&D\end{pmatrix}
+t\det\begin{pmatrix}A&-d\\C&c\end{pmatrix}
=s+t\alpha_k,
$$
so by appropriate choice of $s$ and $t$ we may adjust the determinant by
any element of $\p_k+\alpha_k\a_j\p_k=\o_F$.

Let $\epsilon\in\o_F^\times$ be the unique unit such that
$\epsilon\gamma_1^{-2}\det{M}\in R_2$.
We choose $g=\begin{psmallmatrix}A&B\\C&D\end{psmallmatrix}$
in the above with $\det{g}=\epsilon$.
Then $g\in G_j$ and $\gamma_1^{-1}Mg$ has bottom row
$\begin{pmatrix}\alpha_k&1\end{pmatrix}$. Multiplying on the left
by a suitable $u\in\U_2(F)$, we can make it lower-triangular, i.e.\
$u\gamma_1^{-1}Mg=\begin{psmallmatrix}\gamma_2&\\\alpha_k&1\end{psmallmatrix}$
for some $\gamma_2\in F^\times$. In fact, evaluating the determinant,
we have $\gamma_2=\epsilon\gamma_1^{-2}\det{M}\in R_2$.

Thus,
$uMg=\begin{psmallmatrix}\gamma_1\gamma_2&\\&\gamma_1\end{psmallmatrix}
\begin{psmallmatrix}1&\\\alpha_k&1\end{psmallmatrix}$
is of the required form.
Moreover, although the pair $(u,g)$ is not unique in the above
construction, it is clear that $k$, $\gamma_1$ and $\gamma_2$ are.
This completes the proof.
\end{proof}

\begin{lemma}\label{supp}
We have
$$
W_{\xi_f^0}\!\left(
\begin{pmatrix}\gamma_1\gamma_2\aid_j&&\\&\gamma_1&\\&&1\end{pmatrix}
\begin{pmatrix}1&&\\\aid_j\alpha_k&1&\\&&1\end{pmatrix}\right)
=\psi_{v_k}\big(\alpha_k^{-1}\gamma_2\big)W_{\xi_f^0}\!
\begin{pmatrix}\gamma_1\gamma_2\aid_j\pid_k&&\\
&\gamma_1\pid_k^{-1}&\\&&1\end{pmatrix},
$$
and both sides vanish unless
$\gamma_1\in\p_k$, $\gamma_2\in\a_j^{-1}\p_k^{-2}$.
\end{lemma}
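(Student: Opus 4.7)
The strategy is to verify the identity place-by-place using the factorization $W_{\xi_f^0}=\prod_{v<\infty}W_{\xi_v^0}$. The crucial dichotomy is: at $v\neq v_k$ the lower unipotent is absorbed into the right $K_{1,v}$-invariance of the essential function, while at $v=v_k$ a $\GL_2$ Bruhat decomposition manufactures both the Whittaker character $\psi_{v_k}(\alpha_k^{-1}\gamma_2)$ and the shift by $\pid_k$ on the diagonal.

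For $v\neq v_k$, observe that $\ord_v(\aid_j\alpha_k)=\ord_v(\a_j\p_k^{-1}\q'\p_k')\geq 0$ since $\ord_v(\p_k)=0$ by construction while $\a_j,\q',\p_k'$ are all integral. Hence $\begin{psmallmatrix}1&&\\\aid_{j,v}\alpha_k&1&\\&&1\end{psmallmatrix}\in K_{1,v}(\p_v^{m(\pi_v)})$ (its last row is already $(0,0,1)$), and right-invariance reduces the local factor to $W_{\xi_v^0}\!\begin{psmallmatrix}\gamma_1\gamma_2\aid_{j,v}&&\\&\gamma_1&\\&&1\end{psmallmatrix}$, which agrees with the $v$-component of the RHS (since $\pid_{k,v}=1$).

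At $v=v_k$ one has $\aid_{j,v_k}=1$ and $\alpha_k=u\varpi_{v_k}^{-1}$ for some unit $u\in\o_{v_k}^\times$. I would apply the $\GL_2$ Bruhat identity
\[
\begin{pmatrix}1&\\\alpha_k&1\end{pmatrix}=
\begin{pmatrix}1&\alpha_k^{-1}\\&1\end{pmatrix}
\begin{pmatrix}\alpha_k^{-1}&\\&\alpha_k\end{pmatrix}
\begin{pmatrix}&-1\\1&\end{pmatrix}
\begin{pmatrix}1&\alpha_k^{-1}\\&1\end{pmatrix},
\]
embedded into the top-left $2\times 2$ block of $\GL_3$. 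Pulling the leftmost upper unipotent past $\operatorname{diag}(\gamma_1\gamma_2,\gamma_1,1)$ yields the Whittaker character $\psi_{v_k}(\alpha_k^{-1}\gamma_2)$. The rightmost factor, being upper unipotent with $\alpha_k^{-1}\in\p_{v_k}$ and last row $(0,0,1)$, lies in $K_{1,v_k}(\p_{v_k}^{m(\pi_{v_k})})$ and drops out. The block-embedded Weyl element, whose $\GL_2$-part $\begin{psmallmatrix}&-1\\1&\end{psmallmatrix}$ lies in $\GL_2(\o_{v_k})$, is absorbed via the essential-function property $W_{\xi_v^0}\begin{psmallmatrix}gh&\\&1\end{psmallmatrix}=W_{\xi_v^0}\begin{psmallmatrix}g&\\&1\end{psmallmatrix}$ for $h\in\GL_{n-1}(\o_v)$. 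Applying the same property once more with $h=\operatorname{diag}(u,u^{-1})\in\GL_2(\o_{v_k})$ strips the remaining unit from $\operatorname{diag}(\gamma_1\gamma_2u^{-1}\varpi_{v_k},\gamma_1u\varpi_{v_k}^{-1},1)$, leaving $W_{\xi_{v_k}^0}\!\begin{psmallmatrix}\gamma_1\gamma_2\varpi_{v_k}&&\\&\gamma_1\varpi_{v_k}^{-1}&\\&&1\end{psmallmatrix}$, which matches the $v_k$-component of the RHS.

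For the vanishing assertion, I would invoke the standard support property that $W_{\xi_v^0}\operatorname{diag}(a_1,a_2,1)$ vanishes unless $a_2$ and $a_1a_2^{-1}$ are integral at $v$. Applied to the RHS, the $v_k$-component forces $\gamma_1/\varpi_{v_k}\in\o_{v_k}$ and $\gamma_2\varpi_{v_k}^2\in\o_{v_k}$, i.e.\ $\gamma_1\in\p_k$ and $\gamma_2\in\p_k^{-2}$, while the primes $v\mid\a_j$ (other than $v_k$) force $\gamma_2\aid_{j,v}\in\o_v$. Intersecting across all finite $v$ yields $\gamma_2\in\a_j^{-1}\p_k^{-2}$. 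The main obstacle is the bookkeeping: tracking which unit factors at $v_k$ are legitimately absorbed by the essential-function property versus which contribute the crucial power of $\pid_k$ on the diagonal. The computation itself is a straightforward Bruhat-type manipulation once the decomposition at $v_k$ is in hand.
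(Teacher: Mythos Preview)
Your proposal is correct and follows essentially the same approach as the paper. The paper uses the slightly more economical three-factor decomposition
\[
\begin{pmatrix}1&\\u/\varpi_v&1\end{pmatrix}
=\begin{pmatrix}1&u^{-1}\varpi_v\\&1\end{pmatrix}
\begin{pmatrix}\varpi_v&\\&\varpi_v^{-1}\end{pmatrix}
\begin{pmatrix}&-u^{-1}\\u&\varpi_v\end{pmatrix}
\]
at $v=v_k$ (with $u/\varpi_v$ the $v_k$-component of $\aid_j\alpha_k$), which packages your Weyl element, rightmost unipotent, and unit-stripping into a single element of $\GL_2(\o_{v_k})$; and for the vanishing it derives your ``standard support property'' on the spot via the same right-unipotent trick rather than quoting it.
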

\begin{proof}
Suppose $v\neq v_k$ is a finite place of $F$. Then the corresponding
local factors agree on both sides since $\xi_v^0$ is fixed by
matrices of the form $\begin{psmallmatrix}g&\\&1\end{psmallmatrix}$
for $g\in\GL_2(\o_v)$. Hence, it is enough to prove the local equality
for $v=v_k$. To that end, let us write the $v$-component of $\aid_j\alpha_k$
as $u/\varpi_v$ for $u\in\o_v^\times$. We have
\begin{equation}\label{eqn-supp}
\begin{pmatrix}1&\\u/\varpi_v&1\end{pmatrix}
=\begin{pmatrix}1&u^{-1}\varpi_v\\&1\end{pmatrix}
\begin{pmatrix}\varpi_v&\\&\varpi_v^{-1}\end{pmatrix}
\begin{pmatrix}&-u^{-1}\\u&\varpi_v\end{pmatrix},
\end{equation}
and consequently
$$
W_{\xi_v^0}\!\left(
\begin{pmatrix}\gamma_1\gamma_2\aid_j&&\\&\gamma_1&\\&&1\end{pmatrix}
\begin{pmatrix}1&&\\u/\varpi_v&1&\\&&1\end{pmatrix}\right)
=\psi_v\big(\gamma_2\aid_ju^{-1}\varpi_v\bigr)
W_{\xi_v^0}\!\begin{pmatrix}\gamma_1\gamma_2\aid_j\varpi_v&&\\
&\gamma_1\varpi_v^{-1}&\\&&1\end{pmatrix}.
$$
This concludes the proof of the first assertion.
Next, for any finite place $v$ of $F$ and
$g=\begin{psmallmatrix}a&b\\c&d\end{psmallmatrix}\in\GL_2(F_v)$, one has
$$
W_{\xi_v^0}\!\begin{pmatrix}g&\\&1\end{pmatrix}\neq0
\Longrightarrow c,d\in\o_v.
$$
It follows from this that $\gamma_1\in\p_k$.
On the other hand, for $x\in\o_v$, note that
\begin{align*}
W_{\xi_v^0}\!
\begin{pmatrix}\gamma_1\gamma_2\aid_j\varpi_v&&\\
&\gamma_1\varpi_v^{-1}&\\&&1\end{pmatrix}
&=W_{\xi_v^0}\!\left(
\begin{pmatrix}\gamma_1\gamma_2\aid_j\varpi_v&&\\
&\gamma_1\varpi_v^{-1}&\\&&1\end{pmatrix}
\begin{pmatrix}1&x&\\&1&\\&&1\end{pmatrix}\right)\\
&=\psi_v\big(\gamma_2\aid_j\varpi_v^2x\big)
W_{\xi_v^0}\!
\begin{pmatrix}\gamma_1\gamma_2\aid_j\varpi_v&&\\
&\gamma_1\varpi_v^{-1}&\\&&1\end{pmatrix}.
\end{align*}
Thus, if $W_{\xi_v^0}\!
\begin{psmallmatrix}\gamma_1\gamma_2\aid_j\varpi_v&&\\
&\gamma_1\varpi_v^{-1}&\\&&1\end{psmallmatrix}\neq 0$
then $\gamma_2\aid_j\varpi_v^2\in\o_v$, or in other words
(globally) $\gamma_2\in\a_j^{-1}\p_k^{-2}$.
\end{proof}

In what follows, we use the notation $G^x$ to denote $x^{-1}G x$. Now
\begin{align*}
\Phi_{\xi_\infty,j}\!\begin{pmatrix}h&\beta\\&1\end{pmatrix}
&=\sum_{\gamma\in\U_2(F)\backslash\GL_2(F)}
W_{\xi_\infty}\!\left(\begin{pmatrix}\gamma_\infty&\\&1\end{pmatrix}
\begin{pmatrix}h&\beta\\&1\end{pmatrix}\right)
W_{\xi_f^0}\!\left(\begin{pmatrix}\gamma_f&\\&1\end{pmatrix}g_j\right)\\
&=\sum_{\gamma\in\U_2(F)\backslash\GL_2(F)}
\psi_\infty\big((\gamma_\infty\beta)_2)
W_{\xi_\infty}\!\begin{pmatrix}\gamma_\infty h&\\&1\end{pmatrix}
W_{\xi_f^0}\!\begin{pmatrix}\gamma_fh_j&\\&1\end{pmatrix}\\
&=\sum_{\gamma\in\U_2(F)\backslash\GL_2(F)/G_j}
W_{\xi_f^0}\!\begin{pmatrix}\gamma_fh_j&\\&1\end{pmatrix}
\sum_{\eta\in\U_2(F)^\gamma\cap G_j\backslash G_j}
\psi_\infty\big((\gamma\eta\beta)_2\big)
W_{\xi_\infty}\!\begin{pmatrix}\gamma\eta h&\\&1\end{pmatrix},
\end{align*}
which by Lemmas~\ref{clgp} and \ref{supp} can be written as
\begin{equation}\label{exp2}
\begin{aligned}
\sum_k\frac1{n_{jk}(\q')}
&\sum_{\gamma_1\in R_1,\gamma_2\in R_2}
\psi_{v_k}\big(\alpha_k^{-1}\gamma_2\big)W_{\xi_f^0}\!
\begin{pmatrix}\gamma_1\gamma_2\aid_j\pid_k&&\\&\gamma_1\pid_k^{-1}&\\&&1\end{pmatrix}\\
&\cdot\sum_{\eta\in\U_2(F)^{\ell_k}\cap\Gamma_j(\q')\backslash G_j}
\psi_\infty(\gamma_1(\ell_k\eta\beta)_2)W_{\xi_\infty}\!\left(
\begin{pmatrix}\gamma_1\gamma_2&&\\&\gamma_1&\\&&1\end{pmatrix}
\begin{pmatrix}\ell_k\eta h&\\&1\end{pmatrix}\right),
\end{aligned}
\end{equation}
where $\ell_k=\begin{psmallmatrix}1&\\\alpha_k&1\end{psmallmatrix}$ and
$n_{jk}(\q')=[\U_2(F)^{\ell_k}\cap\Gamma_j(\q'):\U_2(F)^{\ell_k}\cap G_j]$.
One can check that
$$
\U_2(F)^{\ell_k}\cap G_j=
\left\{\ell_k^{-1}\begin{psmallmatrix}1&x\\&1\end{psmallmatrix}\ell_k:
x\in\a_j\p_k^2\right\}.
$$
From this and Lemma~\ref{supp}, it follows that the final
summand in \eqref{exp2}, viewed as a function of $\eta$,
is constant on left cosets of $\U_2(F)^{\ell_k}\cap G_j$ in
$G_j$, so \eqref{exp2} is well defined. Similarly, we have
$$
\U_2(F)^{\ell_k}\cap\Gamma_j(\q')=
\left\{\ell_k^{-1}\begin{psmallmatrix}1&x\\&1\end{psmallmatrix}\ell_k:
x\in\a_j\q'\p_k^2\right\}.
$$
In particular, $n_{jk}(\q')=N(\q')$.

Since $\beta_1\in\a_j\q^{-1}$ and $\beta_2\in\q^{-1}$,
the function $\eta\mapsto\psi_\infty\big(\gamma_1(\ell_k\eta\beta)_2\big)$
is constant on left cosets of $\Gamma_j(\q')$ in $G_j$. Thus \eqref{exp2}
may be rewritten as
\begin{equation}\label{exp3}
\begin{aligned}
\sum_k&\frac1{N(\q')}\sum_{\gamma_1\in R_1,\gamma_2\in R_2}
\psi_{v_k}\big(\alpha_k^{-1}\gamma_2\big)W_{\xi_f^0}\!
\begin{pmatrix}\gamma_1\gamma_2\aid_j\pid_k&&\\&\gamma_1\pid_k^{-1}&\\&&1\end{pmatrix}\\
&\cdot\sum_{\eta\in\Gamma_j(\q')\backslash G_j}
\psi_\infty\big(\gamma_1(\ell_k\eta\beta)_2\big)
\sum_{\epsilon\in\U_2(F)^{\ell_k}\cap\Gamma_j(\q')\backslash\Gamma_j(\q')}
W_{\xi_\infty}\!\left(
\begin{pmatrix}\gamma_1\gamma_2&&\\&\gamma_1&\\&&1\end{pmatrix}
\begin{pmatrix}\ell_k\epsilon\eta h&\\&1\end{pmatrix}\right).
\end{aligned}
\end{equation}
In particular, since we are free to take $\q'=\q$, it follows that
\eqref{phi-j} is left invariant under $\Gamma_j(\q)$.

\subsection{Additive twists}\label{sec-add}
We continue with the above setup. Recall that at every finite place $v$
we have the essential vector $\xi_v^0$ in the space of $\pi_v$ with the
associated essential function $W_{\xi_v^{0}}$ which satisfies
\[
W_{\xi_v^0}\!\begin{pmatrix}1&&\\&1&\\&&1\end{pmatrix}=1.
\]
Let $\varphi\in V_\tau$ be any decomposable vector which is locally the
new vector $\varphi_v^0$ at any finite place $v$ of $F$ and put
$\varphi_j(h)=\varphi(h,h_j), h\in\GL_2(F_\infty)$, so that $\varphi_j\in\mathcal{A}(\Gamma_{0,j}(\q)
\backslash\GL_2(F_\infty),\omega_{\tau_\infty},\chi_\tau^{-1})$. Here again the  essential function $W_{\varphi_{v}^{0}}$ satisfies
\[
W_{\varphi_v^0}\!\begin{pmatrix}1&\\&1\end{pmatrix}=1.
\]
If $\tau_v$ is unramified then \eqref{unr3} with $n=3$ reads as
\begin{equation}\label{unr4}
\int_{\U_2(F_v)\backslash\GL_2(F_v)}
W_{\xi_v^0}\!\begin{pmatrix}g_v&\\&1\end{pmatrix}
W_{\varphi_v^0}(g_v)\,\|\det{g_v}\|_v^{s-\frac{1}{2}}\,dg_v
=L(s,\pi_v\boxtimes\tau_v).
\end{equation}

For each $j=1,\ldots,h$, put
\begin{equation}\label{add}
\Lambda_j(s,\pi,\tau,\xi_\infty,\varphi_\infty,\beta)
=\frac{N(\a_j)^{\frac12-s}}
{[\Gamma_{1,j}(\q):\Gamma_j(\q)]}
\int_{\Gamma_j(\q)\backslash\GL_2(F_\infty)}
\Phi_{\xi_\infty,j}\!\begin{pmatrix}h&{}^t\beta\\&1\end{pmatrix}
\varphi_j(h)\|\det{h}\|_\infty^{s-\frac12}\,dh.
\end{equation}
In view of \eqref{exp3}, this integral is well defined; we call it a
(generalized) \emph{additive twist}.

\begin{lemma}
For $x\in\a_j$, $j=1,\ldots,h$, we have
\[
\Lambda_j(s,\pi,\tau,\xi_\infty,\varphi_\infty,(\beta_1,\beta_2))
=\Lambda_j(s,\pi,\tau,\xi_\infty,\varphi_\infty,(\beta_1+x,\beta_2)).
\]
\end{lemma}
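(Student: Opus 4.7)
The plan is to realize the shift $\beta_1\mapsto\beta_1+x$ as left multiplication by a global element $\gamma\in\P_3(F)$, and then to exploit the left $\P_3(F)$-invariance of $U_\xi$ to trade that action against a right translation on the finite-adelic side, where it will be absorbed by the right $K_1(\n)$-invariance of $\xi_f^0$.

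Concretely, for $x\in\a_j$ I would take
\[
\gamma=\begin{pmatrix}1&0&x\\0&1&0\\0&0&1\end{pmatrix}\in\U_3(F)\subset\P_3(F).
\]
A direct check shows that its archimedean avatar satisfies
\[
\gamma_\infty\begin{pmatrix}h&{}^t(\beta_1,\beta_2)\\&1\end{pmatrix}
=\begin{pmatrix}h&{}^t(\beta_1+x,\beta_2)\\&1\end{pmatrix},
\]
which is precisely the shift one wants. Writing $g=\begin{psmallmatrix}h&{}^t\beta\\&1\end{psmallmatrix}$, the left $\P_3(F)$-invariance of $U_{\xi_\infty\otimes\xi_f^0}$ then gives
\[
\Phi_{\xi_\infty,j}(\gamma_\infty g)
=U_{\xi_\infty\otimes\xi_f^0}\bigl((\gamma_\infty g,g_j)\bigr)
=U_{\xi_\infty\otimes\xi_f^0}\bigl((g,\gamma_f^{-1}g_j)\bigr),
\]
where $\gamma_f$ denotes the finite-adelic image of $\gamma$.

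The remaining bookkeeping is to verify $\gamma_f^{-1}g_j=g_jk$ for some $k\in K_1(\n)$. A direct computation yields $k=(k_v)_{v<\infty}$ with
\[
k_v=\begin{pmatrix}1&0&-\aid_{j,v}^{-1}x\\0&1&0\\0&0&1\end{pmatrix},
\]
and the hypothesis $x\in\a_j$ is exactly what guarantees $\aid_{j,v}^{-1}x\in\o_v$ at every finite place, so $k_v\in\GL_3(\o_v)$; since the last row of $k_v$ is $(0,0,1)$ exactly, in fact $k_v\in K_{1,v}(\p_v^m)$ for every $m\ge0$, so $k\in K_1(\n)$. Invoking right $K_1(\n)$-invariance of $\xi_f^0$ then shows
\[
\Phi_{\xi_\infty,j}\begin{pmatrix}h&{}^t(\beta_1+x,\beta_2)\\&1\end{pmatrix}
=\Phi_{\xi_\infty,j}\begin{pmatrix}h&{}^t(\beta_1,\beta_2)\\&1\end{pmatrix},
\]
and since the rest of the integrand in \eqref{add} (namely $\varphi_j(h)\|\det h\|_\infty^{s-\frac12}$ and the domain of integration) does not involve $\beta$, the lemma follows.

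There is no real obstacle here. The only point that requires care is the final identification $\gamma_f^{-1}g_j=g_jk$ with $k\in K_1(\n)$: this is what makes the choice of ideal-class representative $g_j$ relevant, and it is precisely why the additive twist is insensitive to shifts of $\beta_1$ by $\a_j$ (rather than merely by $\o_F$).
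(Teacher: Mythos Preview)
Your argument is correct and is essentially the paper's proof made explicit: the paper asserts that $\Phi_{\xi_\infty,j}$ is left invariant under $Z_3(F)\P_3(F)\cap\GL_3(F_\infty)g_jK_1(\n)g_j^{-1}$, and you have verified directly that your chosen $\gamma\in\U_3(F)\subset\P_3(F)$ lies in this intersection (equivalently, that $g_j^{-1}\gamma_f g_j\in K_1(\n)$) precisely when $x\in\a_j$.
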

\begin{proof}
Since $U_{\xi_\infty\otimes\xi_f^0}$ is invariant under $Z_3(F)\P_3(F)$
and $\xi_f^0$ is $K_1(\n)$-invariant, it follows that each
$\Phi_{\xi_\infty,j}$ is left invariant by
\[
Z_3(F)\P_3(F)\cap\GL_3(F_\infty)g_jK_1(\n)g_j^{-1}.
\]
From this we see that
\[
\Lambda_j(s,\pi,\tau,\xi_\infty,\varphi_\infty,(\beta_1+x,\beta_2))=\Lambda_j(s,\pi,\tau,\xi_\infty,\varphi_\infty,(\beta_1,\beta_2))
\]
for all $x\in\a_j$.
\end{proof}

If  $(\beta_1,\beta_2)=(0,0)$ then strong approximation implies that
\begin{equation}\label{global}
\sum_j\Lambda_j(s,\pi,\tau,\xi_\infty,\varphi_\infty,\beta)
=\Lambda(s,\pi,\tau,\xi_\infty,\varphi_\infty),
\end{equation}
where
\[
\Lambda(s,\pi,\tau,\xi_\infty,\varphi_\infty)
:=\frac{1}{\vol(K_1(\q))}
\int_{\GL_2(F)\backslash\GL_2(\A_F)}
U_{\xi_\infty\otimes\xi_f^0}\!
\begin{pmatrix}h&\\&1\end{pmatrix}
\varphi(h)\|\det{h}\|^{s-\frac12}\,dh.
\]
If, in addition, $\tau$ is unramified (so that $\q=\o_F$),
then it follows from
\eqref{unr4} that
\begin{equation}\label{unr2}
\Lambda(s,\pi,\tau,\xi_\infty,\varphi_\infty)
=\Psi_\infty(s,\xi_\infty,\varphi_\infty)L(s,\pi\boxtimes\tau)
=\Psi_\infty(s,\xi_\infty,\varphi_\infty)L(s,\pi\times\tau),
\end{equation}
where
$$
L(s,\pi\boxtimes\tau)=\prod_{v<\infty}L(s,\pi_v\boxtimes\tau_v)
\quad\text{and}\quad
L(s,\pi\times\tau)=\prod_{v<\infty}L(s,\pi_v\times\tau_v).
$$

We now state our main result, which is convenient to formulate in terms of
$L(s,\pi\times\tau)$ rather than $L(s,\pi\boxtimes\tau)$.
\begin{theorem}\label{main1}
Let notation be as above, and set $\a=\bigcap_{j=1}^h\a_j$.
Let $\c$ be the conductor of $\chi_\tau$, and choose integral ideals
$\q_1$ and $\q_2$ satisfying
$\c\cap\prod_{\p\mid\q}\p\subseteq\q_2\subseteq\c$ and
$\q\subseteq\q_1\subseteq\prod_{\p\mid\q\q_2^{-1}}\p^{\ord_\p(\q)}$.
Then for any $\beta_2\in\q_2^{-1}\setminus\bigcup_{\p\mid\q_2}\p\q_2^{-1}$,
we have
$$
\tau_\q(\chi_\tau,\beta_2)
\Psi_\infty(s,\xi_\infty,\varphi_\infty)
L(s,\pi\times\tau)
=\frac1{N(\q_1)}\sum_{\beta_1\in\a\q_1^{-1}/\a}\sum_{j=1}^h\Lambda_j(s,\pi,\tau,
\xi_\infty,\varphi_\infty,(\beta_1,\beta_2)),
$$
where
$$
\Psi_\infty(s,\xi_\infty,\varphi_\infty)=
\int_{\U_2(F_\infty)\backslash\GL_2(F_\infty)}W_{\xi_\infty}\!
\begin{pmatrix}h&\\&1\end{pmatrix}
W_{\varphi_\infty}(h)\|\det{h}\|_\infty^{s-\frac12}\,dh
$$
and
$$
\tau_\q(\chi_\tau,\beta_2)=
\sum_{d\in(\o_F/\q)^\times}
\chi_\tau(d)\psi_\infty(d\beta_2).
$$
In particular, if $\q=\c\cap\prod_{\p\mid\q}\p$, then with $\q_2=\q$ we have
$$
\Psi_\infty(s,\xi_\infty,\varphi_\infty)L(s,\pi\times\tau)
=\tau_\q(\chi_\tau,\beta_2)^{-1}
\sum_{j=1}^h\Lambda_j(s,\pi,\tau,\xi_\infty,\varphi_\infty,(0,\beta_2)).
$$
\end{theorem}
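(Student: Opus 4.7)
The plan is to start from the right-hand side, expand each $\Phi_{\xi_\infty,j}$ via the Whittaker expansion of $U_{\xi_\infty\otimes\xi_f^0}$ (with the unipotent shift by $\beta$ factoring out as an archimedean phase), reassemble $\sum_j\Lambda_j$ into a single adelic integral via strong approximation for $\GL_2$, and then average over $\beta_1$ so that the integrand factors over places, producing the global Gauss sum $\tau_\q(\chi_\tau,\beta_2)$ together with $L(s,\pi\times\tau)$.

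Explicitly, I first pull the upper unipotent block out of the Whittaker function to obtain
\[
\Phi_{\xi_\infty,j}\!\begin{pmatrix}h&{}^t\beta\\&1\end{pmatrix}
=\sum_{\gamma\in\U_2(F)\backslash\GL_2(F)}\psi_\infty\!\bigl((\gamma{}^t\beta)_2\bigr)W_{\xi_\infty}\!\begin{pmatrix}\gamma h&\\&1\end{pmatrix}W_{\xi_f^0}\!\begin{pmatrix}\gamma h_j&\\&1\end{pmatrix}
\]
(the finite-place phase is trivial because $\beta$ is supported only at infinity as an adele), substitute this and the Whittaker expansion of $\varphi_j$ into \eqref{add}, interchange sums and integral (justified for $\Re(s)\gg1$), unfold both $\gamma$-sums, and reassemble the $j$-sum via strong approximation into
\[
\int_{\U_2(\A_F)\backslash\GL_2(\A_F)}\psi_\infty\!\bigl(c\beta_1+d\beta_2\bigr)W_{\xi_\infty\otimes\xi_f^0}\!\begin{pmatrix}g&\\&1\end{pmatrix}W_\varphi(g)\|\det g\|^{s-\frac12}\,dg,
\]
where $(c,d)$ denotes the bottom row of the chosen representative $\gamma$. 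The normalizing factors $N(\a_j)^{\frac12-s}/[\Gamma_{1,j}(\q):\Gamma_j(\q)]$ match $\vol(K_1(\q))^{-1}$ via the strong-approximation decomposition in the spirit of \eqref{global}.

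For the averaging step, the global triviality of $\psi$ on $F$ gives $\psi_\infty(c\beta_1)=\prod_{v<\infty}\psi_v(c\beta_1)^{-1}$, so the average $\frac{1}{N(\q_1)}\sum_{\beta_1\in\a\q_1^{-1}/\a}\psi_\infty(c\beta_1)$ collapses to an indicator forcing $c$ into a specific local subgroup at each $v\mid\q_1$; by the choice of $\q_1$, this matches precisely the support of $W_{\xi_v^0}$ at those primes (cf.\ Lemma~\ref{supp}). The residual archimedean phase $\psi_\infty(d\beta_2)$ similarly dualizes to finite-place characters, and at each $v\mid\q_2$ the resulting $d$-summation, weighted by the $\chi_{\tau_v}$-equivariance of $\varphi_v^0$ under $K_{0,v}/K_{1,v}$, produces a local Gauss sum of the form $\sum_{d\in(\o_v/\q_{2,v})^\times}\chi_{\tau_v}(d)\psi_v(d\beta_2)$; the product of these over finite $v$ is exactly $\tau_\q(\chi_\tau,\beta_2)$. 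After extracting the global Gauss sum, the integrand factors over places, evaluating at each finite $v$ to $L(s,\pi_v\times\tau_v)$ by \eqref{unr4} together with the definition of the $\times$-factor, and at $\infty$ to $\Psi_\infty(s,\xi_\infty,\varphi_\infty)$.

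The main obstacle will be the local bookkeeping at the primes dividing $\q\q_2^{-1}$: one must verify that the constraint on $c$ produced by the $\beta_1$-average cuts out exactly the support of $W_{\xi_v^0}$, and that a Bruhat-type decomposition in the spirit of \eqref{eqn-supp} reduces the integrand inside each cell to the essential-vector form appearing in \eqref{unr4}. The elaborate coset and prime-choice setup of Section~\ref{fourier} (the ideals $\a_j$, the elements $\alpha_k,\ell_k,\pid_k$, and Lemmas~\ref{clgp}--\ref{supp}) is designed precisely to make this reduction explicit.
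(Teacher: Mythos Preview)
Your reassembly step has a genuine gap. The integrand $h\mapsto U_\xi\!\begin{psmallmatrix}h&{}^t\beta\\&1\end{psmallmatrix}\varphi(h)$ is \emph{not} left $\GL_2(F)$-invariant, because $\beta$ is supported only at $\infty$: for $\gamma\in\GL_2(F)$ one finds $U_\xi\!\begin{psmallmatrix}\gamma h&{}^t\beta\\&1\end{psmallmatrix}=U_\xi\!\begin{psmallmatrix}h&\gamma^{-1}\,{}^t\beta\\&1\end{psmallmatrix}$, and $\gamma^{-1}\,{}^t\beta\ne{}^t\beta$ in general. So strong approximation does not recombine $\sum_j\int_{\Gamma_j(\q)\backslash\GL_2(F_\infty)}$ into a single quotient integral, and your displayed ``adelic'' integral with the phase $\psi_\infty(c\beta_1+d\beta_2)$ cannot arise that way. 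In particular, in that integral $(c,d)$ is the bottom row of an \emph{adelic} $g$, so the duality $\psi_\infty(x)=\prod_{v<\infty}\psi_v(x)^{-1}$ (valid only for $x\in F$) is unavailable, and your mechanism for converting the archimedean phase into finite-place Gauss sums breaks down. The paper avoids this by never going adelic: it keeps the $\gamma$-sum over $\U_2(F)\backslash\GL_2(F)$ and unfolds it against $\Gamma_j(\q)\backslash\GL_2(F_\infty)$ via the explicit double-coset decomposition of Lemma~\ref{clgp}, which produces the lower-unipotent representatives $\ell_k$ and the diagonal parameters $(\gamma_1,\gamma_2)$ that carry the arithmetic.

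There is a second gap in your endgame. You invoke \eqref{unr4} at finite $v$ to get $L(s,\pi_v\times\tau_v)$, but \eqref{unr4} holds only when $\tau_v$ is unramified; at ramified $v$ the naive local zeta integral with essential vectors can even vanish, which is precisely the phenomenon the theorem is designed to circumvent. The paper does not compute the ramified local integrals at all. After the coset analysis and the $\beta_1$-average (which cuts the $\eta$-sum down to $\Gamma_{0,j}(\m)$), two further ingredients are needed: a vanishing lemma (Lemma~3.5) exploiting the newvector property of $\varphi_v$ to kill the contributions with $\gamma_1\mu_{22}\o_F+\q\ne\o_F$, and then the \emph{global} identification of the resulting Dirichlet series $\sum_{\m_1,\m_2}\lambda_\pi(\m_1,\m_2)\lambda_\tau(\m_2)\chi_{\omega_\tau}(\m_1)N(\m_1^2\m_2)^{-s}$ with $L(s,\pi\times\tau)$ via the Cauchy identity (Lemma~\ref{dd}), bootstrapping from the unramified case. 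Your sketch contains neither of these steps.
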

\begin{remark}
The condition
$\c\cap\prod_{\p\mid\q}\p\subseteq\q_2\subseteq\c$ ensures that
$\tau_\q(\chi_\tau,\beta_2)\ne0$, by \cite[Corollary~1]{Nemchenok}.
\end{remark}

\subsection{Proof of Theorem \ref{main1}}
Let $\zeta$ be a finite id\`ele with $\zeta_v\in\o_v$ for all
$v<\infty$, $\zeta_v\in\o_v^\times$ whenever $\ord_v(\q)=0$, and
$\zeta=1$ if $\c\ne\q$.
Set $\q'=\q\cap(\zeta)$ and
$h_j'=\begin{psmallmatrix}\aid_j\zeta&\\&1\end{psmallmatrix}$.
We will consider the more general integral
\begin{equation}\label{genadd}
\Lambda_j(s,\pi,\tau,\xi_\infty,\varphi_\infty,\beta,\zeta)
:=\frac{N(\a_j)^{\frac12-s}}
{[\Gamma_{1,j}(\q):\Gamma_j(\q')]}
\int_{\Gamma_j(\q')\backslash\GL_2(F_\infty)}
\Phi_{\xi_\infty,j}\!\begin{pmatrix}h&{}^t\beta\\&1\end{pmatrix}
\varphi(h,h_j')\|\det{h}\|_\infty^{s-\frac12}\,dh,
\end{equation}
where $\beta_1\in\a_j\q_1^{-1}$,
$\beta_2\in\q_2^{-1}\setminus\bigcup_{\p\mid\q_2}\p\q_2^{-1}$.

First, substituting
\eqref{exp3} into \eqref{genadd} and changing $h\mapsto\eta^{-1}h$, we get
\begin{equation}\label{add2}
\begin{aligned}
&N(\a_j)^{\frac12-s}
\frac{N(\q')^{-1}}{[\Gamma_{1,j}(\q):\Gamma_j(\q')]}
\sum_k\sum_{\gamma_1\in R_1,\gamma_2\in R_2}
\psi_{v_k}\big(\alpha_k^{-1}\gamma_2\big)
W_{\xi_f^0}\!
\begin{pmatrix}\gamma_1\gamma_2\aid_j\pid_k&&\\&\gamma_1\pid_k^{-1}&\\&&1\end{pmatrix}\\
&\qquad\cdot\sum_{\eta\in\Gamma_j(\q')\backslash G_j}
\psi_\infty\big(\gamma_1(\ell_k\eta\beta)_2\big)
\int_{\Gamma_j(\q')\backslash\GL_2(F_\infty)}
\sum_{\epsilon\in\U_2(F)^{\ell_k}\cap\Gamma_j(\q')\backslash\Gamma_j(\q')}\\
&\qquad\qquad W_{\xi_\infty}\!\left(
\begin{pmatrix}\gamma_1\gamma_2&&\\&\gamma_1&\\&&1\end{pmatrix}
\begin{pmatrix}\ell_k\epsilon h&\\&1\end{pmatrix}\right)
\varphi\big(\eta^{-1}h,h_j'\big)\|\det{h}\|_\infty^{s-\frac12}\,dh.
\end{aligned}
\end{equation}
Since $h\mapsto\varphi(\eta^{-1}h,h_j')$ is left-invariant under
$\Gamma_j(\q')$,
we may collapse the integral and the sum over $\epsilon$, and make the
change of variable $h\mapsto\ell_k^{-1}h$ to get
\begin{equation}\label{add3}
\begin{aligned}
&N(\a_j)^{\frac12-s}
\frac{N(\q')^{-1}}{[\Gamma_{1,j}(\q):\Gamma_j(\q')]}
\sum_k\sum_{\gamma_1\in R_1,\gamma_2\in R_2}
\psi_{v_k}\big(\alpha_k^{-1}\gamma_2\big)
W_{\xi_f^0}\!
\begin{pmatrix}\gamma_1\gamma_2\aid_j\pid_k&&\\&\gamma_1\pid_k^{-1}&\\&&1\end{pmatrix}\\
&\sum_{\eta\in\Gamma_j(\q')\backslash G_j}
\psi_\infty\big(\gamma_1(\ell_k\eta\beta)_2\big)
\int_{\U_2(F)\cap\Gamma_j(\q')^{\ell_k^{-1}}\backslash\GL_2(F_\infty)}
W_{\xi_\infty}\!\left(
\begin{pmatrix}\gamma_1\gamma_2&&\\&\gamma_1&\\&&1\end{pmatrix}
\begin{pmatrix}h&\\&1\end{pmatrix}\right)\\
&\hspace{5cm}\cdot\varphi\big(\eta^{-1}\ell_k^{-1}h,h_j'\big)
\|\det{h}\|_\infty^{s-\frac12}\,dh.
\end{aligned}
\end{equation}

Let us now consider the archimedean integral in \eqref{add3}.
We change $h\mapsto\gamma_1^{-1}h$ to get
$$
\omega_{\tau_\infty}^{-1}(\gamma_1)\|\gamma_1\|_\infty^{1-2s}
\int_{\U_2(F)\cap\Gamma_j(\q')^{\ell_k^{-1}}\backslash\GL_2(F_\infty)}
W_{\xi_\infty}\!\left(
\begin{pmatrix}\gamma_2&&\\&1&\\&&1\end{pmatrix}
\begin{pmatrix}h&\\&1\end{pmatrix}\right)
\varphi\big(\eta^{-1}\ell_k^{-1}h,h_j'\big)
\|\det{h}\|_\infty^{s-\frac12}\,dh.
$$
Since
$\varphi(\eta^{-1}\ell_k^{-1} h,h_j')=\varphi(h,\ell_k\eta h_j')$,
this becomes
\begin{equation}\label{whit1}
\omega_{\tau_\infty}^{-1}(\gamma_1)\|\gamma_1^2\|_\infty^{\frac12-s}
\int_{\U_2(F_\infty)\backslash\GL_2(F_\infty)}W_{\xi_\infty}\!
\begin{pmatrix}\begin{pmatrix}\gamma_2&\\&1\end{pmatrix}h&\\&1\end{pmatrix}
W_\varphi'(h)\|\det{h}\|_\infty^{s-\frac12}\,dh,
\end{equation}
where
\begin{equation}\label{wprime}
W_\varphi'(h)=
\int_{\U_2(F)\cap\Gamma_j(\q')^{\ell_k^{-1}}\backslash\U_2(F_\infty)}
\varphi(u(x)h,\ell_k\eta h_j')\psi_\infty(\gamma_2 x)\,du(x),
\end{equation}
and $u(x)$ denotes the upper unipotent matrix
$\begin{psmallmatrix}1&x\\&1\end{psmallmatrix}$.
Then it is clear that
the function $h\mapsto W_\varphi'\!\left(
\begin{psmallmatrix}\gamma_2^{-1}&\\&1\end{psmallmatrix}h\right)$
belongs to the Whittaker model of $\tau_\infty$ with respect to the
character $\psi_\infty^{-1}$. Hence, by the local multiplicity one
theorem, there exists a number $a^\varphi_{jk}(\gamma_2,\eta)$ so that
\begin{equation}
\label{mpty}
W_\varphi'(h)=a^\varphi_{jk}(\gamma_2,\eta)
W_{\varphi_\infty}\!\left(
\begin{pmatrix}\gamma_2&\\&1\end{pmatrix}h\right),
\quad\mbox{for all }h\in\GL_2(F_\infty),
\end{equation}
where $W_{\varphi_\infty}$ is the Whittaker function (with respect to
$\psi_\infty^{-1}$) associated to $\varphi_\infty$.  Consequently,
\eqref{whit1} can now be written as
\[
\omega_{\tau_\infty}^{-1}(\gamma_1)a_{jk}^\varphi(\gamma_2,\eta)
\|\gamma_1^2\|_\infty^{\frac12-s}
\int_{\U_2(F_\infty)\backslash\GL_2(F_\infty)}W_{\xi_\infty}\!
\begin{pmatrix}\begin{pmatrix}\gamma_2&\\&1\end{pmatrix}h&\\&1\end{pmatrix}
W_{\varphi_\infty}\!\left(
\begin{pmatrix}\gamma_2&\\&1\end{pmatrix}h\right)
\|\det{h}\|_\infty^{s-\frac12}\,dh,
\]
which after the change
$h\mapsto\begin{psmallmatrix}\gamma^{-1}_2&\\&1\end{psmallmatrix}h$ becomes
\begin{equation}\label{whit2}
\omega_{\tau_\infty}^{-1}(\gamma_1)a_{jk}^\varphi(\gamma_2,\eta)
\|\gamma_1^2\gamma_2\|_\infty^{\frac12-s}\|\gamma_2\|_\infty
\int_{\U_2(F_\infty)\backslash\GL_2(F_\infty)}W_{\xi_\infty}\!
\begin{pmatrix}h&\\&1\end{pmatrix}
W_{\varphi_\infty}(h)\|\det{h}\|_\infty^{s-\frac12}\,dh.
\end{equation}
We note that this is precisely the local Rankin--Selberg integral for
$\pi\times\tau$ at $\infty$, viz.\
$\prod_{v\mid\infty}\Psi_v(s;W_{\xi_v},W_{\varphi_v})$,
in the notation of Section~\ref{prelim}.

\begin{lemma}\label{fc1}
We have
$$
a^\varphi_{jk}(\gamma_2,\eta)=N(\a_j\p_k^2\q')W_{\varphi_f}\!\left(
\begin{pmatrix}\gamma_2&\\&1\end{pmatrix}\ell_k\eta h_j'\right).
$$
\end{lemma}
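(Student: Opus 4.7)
The plan is to substitute the Whittaker--Fourier expansion of $\varphi$ into the integral \eqref{wprime} defining $W_\varphi'$, perform the archimedean integration explicitly, and then pin down $a^\varphi_{jk}(\gamma_2,\eta)$ by comparing the resulting expression with \eqref{mpty} via uniqueness of the archimedean Whittaker model.

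First, I would use the cuspidal Fourier expansion
\[
\varphi(g)=\sum_{\alpha\in F^\times}W_\varphi\!\left(\begin{pmatrix}\alpha&\\&1\end{pmatrix}g\right),
\]
together with the pure-tensor factorization $W_\varphi=W_{\varphi_\infty}\cdot W_{\varphi_f}$, and substitute this into \eqref{wprime}. The Whittaker transformation law $W_{\varphi_\infty}\!\left(\begin{psmallmatrix}\alpha&\\&1\end{psmallmatrix}u(x)h\right)=\psi_\infty^{-1}(\alpha x)W_{\varphi_\infty}\!\left(\begin{psmallmatrix}\alpha&\\&1\end{psmallmatrix}h\right)$ allows me to pull both $W_{\varphi_\infty}\!\left(\begin{psmallmatrix}\alpha&\\&1\end{psmallmatrix}h\right)$ and $W_{\varphi_f}\!\left(\begin{psmallmatrix}\alpha&\\&1\end{psmallmatrix}\ell_k\eta h_j'\right)$ outside the $x$-integral. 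The computation of $\U_2(F)\cap\Gamma_j(\q')^{\ell_k^{-1}}$ from the discussion after \eqref{exp2} (which goes through verbatim) identifies the integration domain with $L\backslash F_\infty$ for $L=\a_j\q'\p_k^2$, reducing the archimedean piece to the model integral $\int_{L\backslash F_\infty}\psi_\infty((\gamma_2-\alpha)x)\,dx$.

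Second, I would evaluate this integral. The global triviality $\psi|_F=1$ together with the local condition $\ker\psi_v=\o_v$ for $v<\infty$ forces $\ker(\psi_\infty|_F)=\o_F$; hence $\psi_\infty$ is trivial on the fractional ideal $(\gamma_2-\alpha)L$ if and only if $\alpha-\gamma_2\in L^{-1}$, and the integral vanishes otherwise. Since $\vol(L\backslash F_\infty)=N(L)$ (using $\vol(\o_F\backslash F_\infty)=1$), this produces
\[
W_\varphi'(h)=N(\a_j\q'\p_k^2)\sum_{\substack{\alpha\in F^\times\\ \alpha-\gamma_2\in L^{-1}}}W_{\varphi_\infty}\!\left(\begin{pmatrix}\alpha&\\&1\end{pmatrix}h\right)W_{\varphi_f}\!\left(\begin{pmatrix}\alpha&\\&1\end{pmatrix}\ell_k\eta h_j'\right).
\]

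Finally, I would match this against \eqref{mpty}. Writing $F_\alpha(h):=W_{\varphi_\infty}\!\left(\begin{psmallmatrix}\alpha&\\&1\end{psmallmatrix}h\right)$, the function $F_\alpha$ transforms under left multiplication by $u(y)\in\U_2(F_\infty)$ by the character $y\mapsto\psi_\infty^{-1}(\alpha y)$; for distinct $\alpha\in F\subset F_\infty$ these characters of $F_\infty$ are pairwise distinct, so $\{F_\alpha\}_{\alpha\in F^\times}$ is a linearly independent family of functions on $\GL_2(F_\infty)$. Comparing the expression above with the right-hand side $a^\varphi_{jk}(\gamma_2,\eta)F_{\gamma_2}(h)$ of \eqref{mpty} therefore forces every term with $\alpha\ne\gamma_2$ to drop out, leaving
\[
a^\varphi_{jk}(\gamma_2,\eta)=N(\a_j\q'\p_k^2)W_{\varphi_f}\!\left(\begin{pmatrix}\gamma_2&\\&1\end{pmatrix}\ell_k\eta h_j'\right),
\]
which is the claim. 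The main subtle point is the lattice computation in the second step, relying on $\ker(\psi_\infty|_F)=\o_F$ under the paper's normalization of $\psi$; the interchange of sum and integral in the first step is standard by absolute convergence of the Whittaker expansion of a cusp form.
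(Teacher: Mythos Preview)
Your overall strategy---substitute the Whittaker expansion of $\varphi$ into \eqref{wprime} and evaluate the unipotent integral---is exactly what the paper does. However, your step~2 misevaluates the character integral, and this makes step~3 an unnecessary detour.

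Concretely: once you have reduced to $\int_{L\backslash F_\infty}\psi_\infty((\gamma_2-\alpha)x)\,dx$ with $L=\a_j\p_k^2\q'$, note that for $\alpha-\gamma_2\in L^{-1}\setminus\{0\}$ the map $x\mapsto\psi_\infty((\gamma_2-\alpha)x)$ descends to a \emph{nontrivial} character of the compact quotient $L\backslash F_\infty$, so the integral is $0$, not $N(L)$. Orthogonality of characters on the torus therefore already isolates the single term $\alpha=\gamma_2$, and your linear-independence argument in step~3 is never needed. (As written, step~3 would also force $W_{\varphi_f}\!\bigl(\begin{smallmatrix}\alpha&\\&1\end{smallmatrix}\,\ell_k\eta h_j'\bigr)=0$ for all $\alpha\ne\gamma_2$ with $\alpha-\gamma_2\in L^{-1}$, a claim you have no reason to expect.)

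There is also a subtlety you glossed over: for $\alpha-\gamma_2\notin L^{-1}$ the integrand is not $L$-periodic, so after interchanging sum and integral the individual terms depend on the choice of fundamental domain, and the assertion ``the integral vanishes otherwise'' is not justified. The paper handles this cleanly by first recording a support condition: a calculation parallel to Lemma~\ref{supp} shows that $W_{\varphi_f}\!\bigl(\begin{smallmatrix}\alpha&\\&1\end{smallmatrix}\,\ell_k\eta h_j'\bigr)\ne0$ forces $\alpha$ into a fractional ideal contained in $L^{-1}$. Since $\gamma_2\in\a_j^{-1}\p_k^{-2}\subseteq L^{-1}$ as well, every surviving $\alpha$ satisfies $\alpha-\gamma_2\in L^{-1}$, the integrand is genuinely $L$-periodic term-by-term, and orthogonality finishes the job in one stroke.
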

\begin{proof}
We have $\varphi(g)=\sum_{\gamma\in F^\times}
W_\varphi\!\left(\begin{psmallmatrix}\gamma&\\&1\end{psmallmatrix}g\right)$
for all $g\in\GL_2(\A_F)$, and thus
$$
\varphi(h,\ell_k\eta h_j')
=\sum_{\gamma\in F^\times}
W_{\varphi_f}\!\left(
\begin{pmatrix}\gamma&\\&1\end{pmatrix}\ell_k\eta h_j'\right)
W_{\varphi_\infty}\!\left(\begin{pmatrix}\gamma&\\&1\end{pmatrix}h\right).
$$
Further, a calculation similar to that
in the proof of Lemma~\ref{supp} shows that if
$W_{\varphi_f}\!\left(
\begin{psmallmatrix}\gamma&\\&1\end{psmallmatrix}
\ell_k\eta h_j'\right)$
is non-zero then
$\gamma\in\p_k^{-2}\q_1^{-1}\a_j^{-1}(\zeta^{-1})$.
Plugging this into \eqref{wprime}, we get
\begin{align*}
W_\varphi'(h)&=
\int_{\U_2(F)\cap\Gamma_j(\q')^{\ell_k^{-1}}\backslash\U_2(F_\infty)}
\sum_{\gamma\in F^\times}W_{\varphi_f}\!
\left(\begin{pmatrix}\gamma&\\&1\end{pmatrix}\ell_k\eta h_j'\right)
W_{\varphi_\infty}\!\left(\begin{pmatrix}\gamma&\\&1\end{pmatrix}
u(x)h\right)\psi_\infty(\gamma_2 x)\,du(x)\\
&=\sum_{\gamma\in F^\times}W_{\varphi_f}\!\left(
\begin{pmatrix}\gamma&\\&1\end{pmatrix}\ell_k\eta h_j'\right)
W_{\varphi_\infty}\!\left(\begin{pmatrix}\gamma&\\&1\end{pmatrix}h\right)
\int_{\U_2(F)\cap\Gamma_j(\q')^{\ell_k^{-1}}\backslash\U_2(F_\infty)}
\psi_\infty\big((\gamma_2-\gamma)x\big)\,du(x)\\
&=N(\a_j\p_k^2\q')W_{\varphi_f}\!\left(
\begin{pmatrix}\gamma_2&\\&1\end{pmatrix}\ell_k\eta h_j'\right)
W_{\varphi_\infty}\!\left(\begin{pmatrix}\gamma_2&\\&1\end{pmatrix}h\right).
\end{align*}
Here we have used the fact that
the integral vanishes unless $\gamma=\gamma_2$,
and that $\vol(\U_2(F)\cap\Gamma_j(\q')^{\ell_k^{-1}}\backslash\U_2(F_\infty))
=N(\a_j\p_k^2\q')$, since the additive measure is normalized so that
$\vol(F_\infty/\o_F)=1$.
Our assertion now follows from \eqref{mpty}.
\end{proof}

By \eqref{whit2} and Lemma~\ref{fc1}, we now have
\begin{equation}\label{add4}
\begin{aligned}
\Lambda_j(s,\pi,\tau,\xi_\infty,\varphi_\infty,\beta,\zeta)&=
N(\a_j\p_k^2)N(\a_j)^{\frac12-s}
\prod_{v\mid\infty}\Psi_v(s;W_{\xi_v},W_{\varphi_v})\\
&\qquad\cdot\sum_k\sum_{\gamma_1,\gamma_2\in F^\times/\o_F^\times}
\|\gamma_1^2\gamma_2\|_\infty^{\frac12-s}\|\gamma_2\|_\infty
\psi_{v_k}\big(\alpha_k^{-1}\gamma_2\big)W_{\xi_f^0}\!
\begin{pmatrix}\gamma_1\gamma_2\aid_j\pid_k&&\\&\gamma_1\pid_k^{-1}&\\&&1\end{pmatrix}\\
&\qquad\cdot\frac1{[\Gamma_{1,j}(\q):\Gamma_j(\q')]}
\sum_{\eta\in\Gamma_j(\q')\backslash G_j}
\psi_\infty\big(\gamma_1(\ell_k\eta\beta)_2\big)
W_{\varphi_f}\!\left(\begin{pmatrix}\gamma_1\gamma_2&\\&\gamma_1\end{pmatrix}
\ell_k\eta h_j'\right).
\end{aligned}
\end{equation}

Next we study the average of the inner sum over
$\beta_1\in\a_j\q_1^{-1}/\a_j$:
\begin{equation}\label{innersum}
\frac1{N(\q_1)}\sum_{\beta_1\in\a_j\q_1^{-1}/\a_j}
\frac1{[\Gamma_{1,j}(\q):\Gamma_j(\q')]}\sum_{\eta\in\Gamma_j(\q')\backslash G_j}
\psi_\infty\big(\gamma_1(\ell_k\eta\beta)_2\big)W_{\varphi_f}\!\left(
\begin{pmatrix}\gamma_1\gamma_2&\\&\gamma_1\end{pmatrix}
\ell_k\eta h_j'\right).
\end{equation}
If $\eta=\begin{psmallmatrix}A&B\\C&D\end{psmallmatrix}$ then
$(\ell_k\eta\beta)_2=(A\alpha_k+C)\beta_1+(B\alpha_k+D)\beta_2$.
Since $\gamma_1\alpha_k\in\q$, it follows that
$\psi_\infty\big(\gamma_1(\ell_k\eta\beta)_2\big)
=\psi_\infty(\gamma_1(C\beta_1+D\beta_2))$.

Next note that
$W_{\varphi_f}\!\left(\begin{psmallmatrix}
\gamma_1\gamma_2&\\&\gamma_1\end{psmallmatrix}\ell_k\eta h_j'\right)$
is invariant under replacing $\eta$ by
$\eta\begin{psmallmatrix}1&x\\&1\end{psmallmatrix}$
for $x\in(\zeta)\a_j$, while
$\psi_\infty\big(\gamma_1(\ell_k\eta\beta)_2\big)$ changes by a factor
of $\psi_\infty(C\gamma_1\beta_2x)$.
Hence, averaging over $x\in(\zeta)\a_j/\q'\a_j$, we see that \eqref{innersum}
vanishes unless $C\gamma_1\in\q_2(\zeta^{-1})\a_j^{-1}$.
Further, computing the average over $\beta_1$, we get $0$ unless
$C\gamma_1\in\q_1\a_j^{-1}$. Therefore, we can restrict $\eta$ to
$\Gamma_{0,j}(\m)$, where $\m=\q_1(\gamma_1^{-1})\cap
\q_2(\zeta^{-1})(\gamma_1^{-1})\cap\o_F$.
Since $\Gamma_j(\q')$ is normal in $\Gamma_{0,j}(\m)$, we may swap left and
right cosets, so \eqref{innersum} becomes
\begin{equation}\label{innersum2}
\frac1{[\Gamma_{1,j}(\q):\Gamma_j(\q')]}
\sum_{\eta\in\Gamma_{0,j}(\m)/\Gamma_j(\q')}
\psi_\infty(\gamma_1\beta_2\eta_{22})W_{\varphi_f}\!\left(
\begin{pmatrix}\gamma_1\gamma_2&\\&\gamma_1\end{pmatrix}
\ell_k\eta h_j'\right),
\end{equation}
where $\eta_{22}$ denotes the lower-right entry of $\eta$.

Now set $\m'=\q_1\cap\q_2(\zeta^{-1})$,
$$
X_0=\left\{\begin{psmallmatrix}a&b\\c&d\end{psmallmatrix}\in\Gamma_{0,j}(\m'):
b\in(\zeta)\a_j\right\},
\quad
X_1=\left\{\begin{psmallmatrix}a&b\\c&d\end{psmallmatrix}\in X_0:
d-1\in\q\right\}.
$$
Then as a function of $\eta$, \eqref{innersum2} is right invariant under
$X_1$ and acts via the nebentypus character $\chi_\tau$ under $X_0$.
Hence, \eqref{innersum2} equals
\begin{equation}\label{innersum3}
\frac{[X_1:\Gamma_j(\q')]}{[\Gamma_{1,j}(\q):\Gamma_j(\q')]}
\sum_{\mu\in\Gamma_{0,j}(\m)/X_0}
W_{\varphi_f}\!\left(
\begin{pmatrix}\gamma_1\gamma_2&\\&\gamma_1\end{pmatrix}\ell_k\mu h_j'\right)
\sum_{\nu\in X_0/X_1}\chi_\tau(\nu)
\psi_\infty(\gamma_1\beta_2\mu_{22}\nu_{22}).
\end{equation}

Next, note that for a fixed $\gamma_1$,
the ideal $\gamma_1\mu_{22}\o_F+\q$ is independent of the choice of
representative for $\mu$. We will show that the sum over those $\mu$ for which
$\gamma_1\mu_{22}\o_F+\q\ne\o_F$ vanishes.
First, if $\gamma_1\mu_{22}\o_F+\c\ne\o_F$ then the Gauss sum
$$
\sum_{\nu\in X_0/X_1}\chi_\tau(\nu)
\psi_\infty(\gamma_1\beta_2\mu_{22}\nu_{22})
=\sum_{d\in(\o_F/\q)^\times}\chi_\tau(d)
\psi_\infty(\gamma_1\beta_2\mu_{22}d)
$$
vanishes. Hence, we may assume that $(\zeta)=\o_F$, so
$\m'=\q$ and $X_0=\Gamma_{0,j}(\q)$.

If $\gamma_1\o_F+\q=\o_F$ then $\m=\q$, whence $D\o_F+\q=\o_F$, so
we may assume that $\gamma_1\o_F\subseteq\p$ for some prime ideal
$\p\supseteq\q$. We split the coset representatives for $\mu$ as
$g_1g_2$, where
$g_1=\begin{psmallmatrix}A_1&B_1\\C_1&D_1\end{psmallmatrix}$
runs through representatives for
$\Gamma_{0,j}(\m)/\Gamma_{0,j}(\p^{-1}\q)$ and
$g_2=\begin{psmallmatrix}A_2&B_2\\C_2&D_2\end{psmallmatrix}$
runs through representatives for
$\Gamma_{0,j}(\p^{-1}\q)/\Gamma_{0,j}(\q)$.
Then
$\psi_\infty(\gamma_1\mu_{22}\beta_2)
=\psi_\infty(\gamma_1D_1D_2\beta_2)$.
Defining
$$
f_{\gamma_1,g_1}(g_2)=
\sum_{d\in(\o_F/\q_2)^\times}\chi_\tau(d)
\psi_\infty(\gamma_1D_1D_2d\beta_2),
$$
it is plain that
$f_{\gamma_1,g_1}(kg_2k')=f_{\gamma_1,g_1}(g_2)\overline{\chi_\tau(k')}$
for any $k\in\Gamma_{1,j}(\p^{-1}\q)$,
$k'\in\Gamma_{0,j}(\q)$.
As the following lemma shows, these terms therefore contribute nothing
to \eqref{innersum3}.

\begin{lemma}
Suppose $\p$ is prime ideal dividing $\q$.
Let $f:\Gamma_{0,j}(\p^{-1}\q)\to\C$ be left invariant under
$\Gamma_{1,j}(\p^{-1}\q)$ and transform by
$\overline{\chi}_\tau$ on the right under $\Gamma_{0,j}(\q)$.
Then for any $x\in\GL_2(\A_{F,f})$,
$$
\sum_{g\in\Gamma_{0,j}(\p^{-1}\q)/\Gamma_{0,j}(\q)}W_{\varphi_f}(xgh_j)f(g)=0.
$$
\end{lemma}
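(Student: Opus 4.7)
The plan is to realize the sum as the Whittaker value at $y=xh_j$ of an auxiliary vector in $V_{\tau_f}$, and then show that this vector vanishes because its local $\p$-component is forced to be invariant under a congruence subgroup strictly larger than the one defining the conductor of $\tau_\p$. Concretely, set
\[
\tilde\varphi_f:=\sum_{g\in\Gamma_{0,j}(\p^{-1}\q)/\Gamma_{0,j}(\q)}f(g)\,\tau_f(h_j^{-1}g_fh_j)\varphi_f^0\in V_{\tau_f}.
\]
Using $\tau_f(k)\varphi_f^0=\chi_\tau(k)\varphi_f^0$ for $k\in K_0(\q)$ together with the identity $\chi_\tau(h_j^{-1}\delta_fh_j)=\chi_\tau(\delta)$ (conjugation by $h_j=\begin{psmallmatrix}\aid_j&\\&1\end{psmallmatrix}$ preserves the $(2,2)$-entry), each summand is invariant under $g\mapsto g\delta$ for $\delta\in\Gamma_{0,j}(\q)$, so $\tilde\varphi_f$ is well defined. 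Applying the Whittaker functional,
\[
W_{\tilde\varphi_f}(xh_j)=\sum_gf(g)\,W_{\varphi_f^0}\bigl(xh_j\cdot h_j^{-1}g_fh_j\bigr)=\sum_gf(g)\,W_{\varphi_f}(xg_fh_j),
\]
which is precisely the sum in the lemma. It therefore suffices to prove $\tilde\varphi_f=0$.

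For any $\gamma_1\in\Gamma_{1,j}(\p^{-1}\q)$, substituting $g'=\gamma_1 g$ in the defining sum and invoking the left-invariance $f(\gamma_1^{-1}g')=f(g')$ yields
\[
\tau_f(h_j^{-1}(\gamma_1)_fh_j)\tilde\varphi_f=\sum_{g'}f(g')\tau_f(h_j^{-1}g'_fh_j)\varphi_f^0=\tilde\varphi_f.
\]
Thus $\tilde\varphi_f$ is fixed by the subset $G_0:=\{h_j^{-1}(\gamma_1)_fh_j:\gamma_1\in\Gamma_{1,j}(\p^{-1}\q)\}\subset K_1(\p^{-1}\q)$. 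By strong approximation within the $j$-th double coset one has $[\Gamma_{1,j}(\p^{-1}\q):\Gamma_{1,j}(\b)]=[K_1(\p^{-1}\q):K_1(\b)]$ for every integral ideal $\b\subseteq\p^{-1}\q$, whence $G_0$ meets every coset of $K_1(\b)$ in $K_1(\p^{-1}\q)$; that is, $G_0$ is dense in $K_1(\p^{-1}\q)$. Since $\tau_f$ is smooth, the stabilizer of $\tilde\varphi_f$ in $K_1(\p^{-1}\q)$ is open (and therefore also closed), contains the dense subset $G_0$, and hence equals all of $K_1(\p^{-1}\q)$.

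Finally, decompose $V_{\tau_f}\cong V_{\tau_\p}\otimes V_{\tau^{(\p)}}$. The local $\p$-component of $K_1(\p^{-1}\q)$ is $K_{1,\p}(\p_\p^{m(\tau_\p)-1})$, and writing $\tilde\varphi_f=\sum_iv_i\otimes w_i$ with the $w_i$ linearly independent shows that each $v_i$ lies in $V_{\tau_\p}^{K_{1,\p}(\p_\p^{m(\tau_\p)-1})}$. Since $\p\mid\q$ forces $m(\tau_\p)\ge1$, this invariant subspace is zero by the very definition of the conductor $\p_\p^{m(\tau_\p)}$, so $\tilde\varphi_f=0$ and the sum vanishes. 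The main technical point is the density of $G_0$ in $K_1(\p^{-1}\q)$: this is a standard consequence of strong approximation for $\SL_2$, but handling the class-group obstruction for $\GL_2$ requires working within the fixed $j$-th double coset, as indicated.
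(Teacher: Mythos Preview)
Your proof is correct, and it takes a genuinely different route from the paper's.

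The paper works directly with the Whittaker values: fixing the place $v$ corresponding to $\p$ and writing $m=\ord_\p(\q)$, it uses strong approximation for $\SL_2$ to produce, for each $k_1\in K_{1,v}^1(\p_v^{m-1})/K_v^1(\p_v^m)$, an element $k\in\Gamma_{1,j}^1(\p^{-1}\q)$ that approximates $k_1$ at $v$ and lies in $K_{1,w}(\p_w^{\ord_w\q})$ at all other finite places $w$. Left-translating the sum by $k$ (allowed by the left-invariance of $f$) leaves the $w\ne v$ Whittaker factors unchanged, and after some manipulation at $v$ one is reduced to showing that the average $\sum_{k_1}W_{\varphi_v}(tk_1)$ over $K_{1,v}^1(\p_v^{m-1})/K_v^1(\p_v^m)$ vanishes; this follows because that average is right $K_{1,v}(\p_v^{m-1})$-invariant while $m(\tau_v)=m$.

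Your argument lifts everything to the representation space: you define an auxiliary vector $\tilde\varphi_f\in V_{\tau_f}$ whose Whittaker value recovers the sum, observe that it is fixed by the image $G_0$ of $\Gamma_{1,j}(\p^{-1}\q)$ inside $K_1(\p^{-1}\q)$, use strong approximation (via the index comparison) to see that $G_0$ is dense, and then invoke smoothness of $\tau_f$ to upgrade to invariance under all of $K_1(\p^{-1}\q)$; vanishing then follows from the conductor of $\tau_\p$. The key inputs---strong approximation for $\SL_2$ and the minimality of $m(\tau_\p)$---are the same, but your packaging is more conceptual and avoids the place-by-place bookkeeping of the paper. The paper's approach has the minor advantage of being self-contained at the level of Whittaker functions, while yours makes the underlying representation-theoretic reason (a vector with too much level must be zero) completely transparent.

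One small point worth noting: the density claim for $G_0$ does require the observation that the determinant obstruction (the image of $\o_F^\times$ in $\prod_v\o_v^\times$ not being dense) is harmless here, since $\det:K_1(\b)\to\prod_v\o_v^\times$ is surjective for every $\b$, so one can first correct the determinant inside $K_1(\b)$ and then apply strong approximation for $\SL_2$. Your parenthetical remark indicates you are aware of this, and the index argument you give does handle it.
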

\begin{proof}
Let $v$ be the place corresponding to $\p$, and put
$m=\ord_\p(\q)$.
For any subgroup $G\subseteq\GL_2$, let $G^1$ denote $G\cap\SL_2$.
The natural map
$$
\Gamma_{1,j}^1(\p^{-1}\q)\to K_{1,v}^1(\p^{m-1})/K_v^1(\p^m)
$$
is surjective by strong approximation for $\SL_2(\A_F)$.
Fix $k_1\in K_{1,v}^1(\p^{m-1})$ and choose
$k\in\Gamma_{1,j}^1(\p^{-1}\q)$ that maps to the coset $k_1K_v^1(\p^m)$
above. We have
$$
\sum_{g\in\Gamma_{0,j}(\p^{-1}\q)/\Gamma_{0,j}(\q)}W_{\varphi_f}(xgh_j)f(g)
=\sum_{g\in\Gamma_{0,j}(\p^{-1}\q)/\Gamma_{0,j}(\q)}W_{\varphi_f}(xkgh_j)f(g).
$$

For any finite place $w\ne v$, we have
$$
W_{\varphi_w}(xkgh_j)=W_{\varphi_w}(xgh_j),
$$
since $(gh_j)^{-1}k_w(gh_j)\in K_{1,w}(\p_w^{\ord_{\p_w}(\q)})$.
At $v$ we have $k_v=k_1k'$ for some $k'\in K_v^1(\p^m)$, so that
$$
W_{\varphi_v}(xkgh_j)=W_{\varphi_v}(xk_1k'g)
=W_{\varphi_v}(xk_1gg^{-1}k'g)
=W_{\varphi_v}(xk_1g)
=W_{\varphi_v}(xgg^{-1}k_1g),
$$
since $g^{-1}k'g\in g^{-1}K_v^1(\p^m)g=K_v^1(\p^m)$.

Now, if $k_1$ runs through a set of
representatives for $K_{1,v}^1(\p_v^{m-1})/K_v^1(\p_v^m)$,
so does $g^{-1}k_1g$, since
$g$ normalizes both $K_{1,v}^1(\p_v^{m-1})$ and $K_v^1(\p_v^m)$.
Thus, averaging over $k_1$, our sum becomes
$$
\sum_{g\in\Gamma_{0,j}(\p^{-1}\q)/\Gamma_{0,j}(\q)}f(g)
\prod_{\substack{w<\infty\\w\ne v}}W_{\varphi_w}(xgh_j)
\cdot\frac1{[K_{1,v}^1(\p_v^{m-1}):K_{1,v}^1(\p_v^m)]}
\sum_{k_1}W_{\varphi_v}(xgk_1).
$$

Consider the inner sum
$\sum_{k_1}W_{\varphi_v}(tk_1)$ as a function of $t$.
Fixing $k\in K_{1,v}(\p_v^{m-1})$, we have $k\in
\begin{psmallmatrix}y&\\&1\end{psmallmatrix}K_{1,v}^1(\p_v^{m-1})$
for some $y\in\o_v^\times$. Hence,
$$
\sum_{k_1}W_{\varphi_v}(tkk_1)
=\sum_{k_1}W_{\varphi_v}(t\begin{psmallmatrix}y&\\&1\end{psmallmatrix}k_1)
=\sum_{k_1}W_{\varphi_v}(tk_1\begin{psmallmatrix}y&\\&1\end{psmallmatrix})
=\sum_{k_1}W_{\varphi_v}(tk_1),
$$
since $\begin{psmallmatrix}y&\\&1\end{psmallmatrix}\in K_{1,v}(\p_v^m)$.
Therefore $\sum_{k_1}W_{\varphi_v}(tk_1)=0$.
\end{proof}

Hence, we may assume that $\gamma_1\mu_{22}\o_F+\q=\o_F$.
In this case, we can choose representatives
of the form $\mu=\begin{psmallmatrix}1&x\\&1\end{psmallmatrix}$,
where $x\in\a_j\p_k^2/(\zeta)\a_j\p_k^2$, and we have
$$
\sum_{\nu\in X_0/X_1}\chi_\tau(\nu)
\psi_\infty(\gamma_1\beta_2\mu_{22}\nu_{22})
=\overline{\chi_\tau(\gamma_1)}
\tau_\q(\chi_\tau,\beta_2).
$$
Further, we compute that
$$
W_{\varphi_f}\!\left(
\begin{pmatrix}\gamma_1\gamma_2&\\&\gamma_1\end{pmatrix}\ell_k\eta h_j'\right)
=\omega_{\tau_\infty}^{-1}(\gamma_1)W_{\varphi_f}\!\left(
\begin{pmatrix}\gamma_2&\\&1\end{pmatrix}\ell_kh_j'\right).
$$

Consequently the finite part of
$\frac1{N(\q_1)}\displaystyle\sum_{\beta_1\in\a\q_1^{-1}/\a}\Lambda_j(s,\pi,\tau,\xi_\infty,\varphi_\infty,\beta,\zeta)$
takes the form
\begin{align*}
&\frac{N(\zeta)[X_1:\Gamma_j(\q')]}{[\Gamma_{1,j}(\q):\Gamma_j(\q')]}
N(\a_j)^{\frac12-s}
\sum_kN(\a_j\p_k^2)\|\gamma_2\|_\infty\\
&\cdot\sum_{\gamma_1\in\o_F^\times\backslash\p_k\cap F^\times}
\sum_{\gamma_2\in\o_F^\times\backslash\a_j^{-1}\p_k^{-2}\cap F^\times}
\|\gamma_1^2\gamma_2\|_\infty^{\frac12-s}W_{\xi_f^0}\!
\begin{pmatrix}\gamma_1\gamma_2\aid_j\pid_k&&\\&\gamma_1\pid_k^{-1}&\\&&1\end{pmatrix}\\
&\cdot\tau_\q(\chi_\tau,\beta_2)
\psi_{v_k}\big(\alpha_k^{-1}\gamma_2\big)
W_{\varphi_f}\!\left(
\begin{pmatrix}\gamma_2&\\&1\end{pmatrix}\ell_kh_j'\right)
\omega_{\tau_\infty}^{-1}(\gamma_1)
\overline{\chi_\tau(\gamma_1)}.
\end{align*}
For $\gamma_1,\gamma_2$ as above, let $\m_1=(\gamma_1)\p_k^{-1}$
and $\m_2=(\gamma_2)\a_j\p_k^2$. Then
$\|\gamma_1^2\gamma_2\|_\infty=N(\m_1^2\m_2\a_j^{-1})$
and $(\gamma_1)=\p_k\m_1$.
\begin{lemma}
We have
$$
\psi_{v_k}\big(\alpha_k^{-1}\gamma_2\big)
W_{\varphi_f}\!\left(
\begin{pmatrix}\gamma_2\pid_k&\\&\pid_k\end{pmatrix}\ell_kh_j'\right)
=W_{\varphi_f}\!\begin{pmatrix}\zeta&\\&1\end{pmatrix}
W_{\varphi_f}\!\left(
\begin{pmatrix}\gamma_2\pid_k^2&\\&1\end{pmatrix}h_j\right).
$$
\end{lemma}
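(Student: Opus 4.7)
Since $W_{\varphi_f}=\prod_{v<\infty}W_{\varphi_v^0}$ and $\pid_k$ is non-trivial only at $v_k$, the claim factors into purely local identities, and it suffices to verify the corresponding equalities at each $v<\infty$. Concretely, write $M_1=\begin{psmallmatrix}\gamma_2\pid_k&\\&\pid_k\end{psmallmatrix}\ell_kh_j'$ for the matrix on the left. We need, at $v=v_k$, that $\psi_{v_k}(\alpha_k^{-1}\gamma_2)W_{\varphi_{v_k}^0}(M_{1,v_k})$ equals the $v_k$-factor of the right-hand side, and at $v\ne v_k$ that
\[
W_{\varphi_v^0}(M_{1,v})=W_{\varphi_v^0}\!\begin{psmallmatrix}\zeta_v&\\&1\end{psmallmatrix}\,W_{\varphi_v^0}\!\begin{psmallmatrix}\gamma_2\aid_{j,v}&\\&1\end{psmallmatrix}.
\]

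At $v_k$, where $\tau_{v_k}$ is unramified and $\alpha_{k,v_k}=u/\varpi_{v_k}$ with $u\in\o_{v_k}^\times$, I would substitute the identity \eqref{eqn-supp} from the proof of Lemma~\ref{supp} into $\ell_k$ and then left-multiply by $\begin{psmallmatrix}\gamma_2\pid_k&\\&\pid_k\end{psmallmatrix}$. A direct rearrangement puts $M_{1,v_k}$ in the form $\begin{psmallmatrix}1&\alpha_k^{-1}\gamma_2\\&1\end{psmallmatrix}\begin{psmallmatrix}\gamma_2\varpi_{v_k}^2&\\&1\end{psmallmatrix}k_1$ for some $k_1\in\GL_2(\o_{v_k})$. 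Using the $\psi_{v_k}^{-1}$-equivariance of $W_{\varphi_{v_k}^0}$ under the upper unipotent and its right $\GL_2(\o_{v_k})$-invariance, the unipotent contributes $\overline{\psi_{v_k}(\alpha_k^{-1}\gamma_2)}$, which cancels the prefactor. The matching $v_k$-term on the right is $W_{\varphi_{v_k}^0}\begin{psmallmatrix}\zeta_{v_k}&\\&1\end{psmallmatrix}\cdot W_{\varphi_{v_k}^0}\begin{psmallmatrix}\gamma_2\varpi_{v_k}^2\aid_{j,v_k}&\\&1\end{psmallmatrix}$, which collapses to $W_{\varphi_{v_k}^0}\begin{psmallmatrix}\gamma_2\varpi_{v_k}^2&\\&1\end{psmallmatrix}$ after absorbing the units $\zeta_{v_k},\aid_{j,v_k}$.

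At $v\ne v_k$ we have $\pid_{k,v}=1$, so $M_{1,v}=\begin{psmallmatrix}\gamma_2\aid_{j,v}\zeta_v&\\\alpha_{k,v}\aid_{j,v}\zeta_v&1\end{psmallmatrix}$. The equality $(\alpha_k)=\p_k^{-1}\q'\p_k'$ together with the coprimality of $\p_k,\p_k'$ to $v$ forces $\ord_v(\alpha_k)=\ord_v(\q')\geq m(\tau_v)$, and combined with $\aid_{j,v},\zeta_v\in\o_v$ this places $\alpha_{k,v}\aid_{j,v}\zeta_v$ in $\p_v^{m(\tau_v)}$. Hence $\begin{psmallmatrix}1&\\-\alpha_{k,v}\aid_{j,v}\zeta_v&1\end{psmallmatrix}$ lies in $K_{1,v}(\p_v^{m(\tau_v)})$, and right-invariance of $W_{\varphi_v^0}$ under this subgroup gives $W_{\varphi_v^0}(M_{1,v})=W_{\varphi_v^0}\begin{psmallmatrix}\gamma_2\aid_{j,v}\zeta_v&\\&1\end{psmallmatrix}$. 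What remains is the diagonal ``multiplicativity'' $W_{\varphi_v^0}\begin{psmallmatrix}\gamma_2\aid_{j,v}\zeta_v&\\&1\end{psmallmatrix}=W_{\varphi_v^0}\begin{psmallmatrix}\zeta_v&\\&1\end{psmallmatrix}W_{\varphi_v^0}\begin{psmallmatrix}\gamma_2\aid_{j,v}&\\&1\end{psmallmatrix}$. When $\zeta_v\in\o_v^\times$ this is immediate from the $\chi_{\tau_v}$-equivariance under $K_{0,v}(\p_v^{m(\tau_v)})$ together with $\chi_{\tau_v}\begin{psmallmatrix}\zeta_v&\\&1\end{psmallmatrix}=\omega_{\tau_v}(1)=1$. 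The one remaining case $\zeta_v\notin\o_v^\times$ is the main obstacle: our hypothesis on $\zeta$ forces $\c=\q$, and at each $v\mid\q$ this equality of conductors constrains $\tau_v$ to be a principal series $\pi(\mu_1,\mu_2)$ with one character unramified; the standard explicit formula expressing $W_{\varphi_v^0}\begin{psmallmatrix}a&\\&1\end{psmallmatrix}$ for $a\in\o_v$ as a character in $a$ times $\|a\|_v^{1/2}$ then makes the identity transparent. This last, representation-theoretic step, and the bookkeeping to recognise that only this restricted class of local $\tau_v$ can arise, is where I expect the real work to concentrate.
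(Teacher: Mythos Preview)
Your proposal is correct and follows essentially the same route as the paper: a place-by-place verification using the identity \eqref{eqn-supp} at $v_k$, stripping the lower unipotent via $K_{1,v}$-invariance at $v\ne v_k$, and then reducing the residual diagonal multiplicativity (when $\zeta_v\notin\o_v^\times$, forcing $\c=\q$) to the fact that $\tau_v$ must be a principal series with exactly one unramified character, whence the explicit newform formula (as in Miyauchi) gives $W_{\varphi_v^0}\begin{psmallmatrix}\varpi_v^f&\\&1\end{psmallmatrix}=q_v^{-f/2}\alpha^f$ and the identity is immediate. One minor slip: you invoke coprimality of $\p_k'$ to $v$, which is not assumed, but since $\ord_v(\p_k')\ge 0$ the needed bound $\ord_v(\alpha_k\aid_{j,v}\zeta_v)\ge\ord_v(\q)=m(\tau_v)$ still holds, so nothing breaks.
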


\begin{proof}
We check this locally at every place $v$ by the same computation as in
the proof of Lemma~\ref{supp}. The proof here splits naturally into two cases. First, suppose $\c\neq \q$. Then $\zeta=1$ and the lemma is trivial to check at any finite place $v\neq v_k$. For $v=v_k$, we write the $v$-component of $\alpha_k\aid_j=u/\varpi_v$ as in Lemma~\ref{supp} and use the decomposition (\ref{eqn-supp}) to re-write the left hand side (locally at $v$) as
\begin{align*}
\psi_{v_k}\big(\alpha_k^{-1}\gamma_2\big)
W_{\varphi_v^0}\!\left(
\begin{pmatrix}\gamma_2\varpi_v&\\&\varpi_v\end{pmatrix}\ell_kh_j'\right)
&=\psi_{v_k}\big(\alpha_k^{-1}\gamma_2\big)
W_{\varphi_v^0}\!\left(
\begin{pmatrix}\gamma_2\aid_j\varpi_v&\\&\varpi_v\end{pmatrix}
\begin{pmatrix}1&\\u/\varpi_v&1\end{pmatrix}\right)\\
&=W_{\varphi_v^0}\!\left(
\begin{pmatrix}\gamma_2\aid_j\varpi^2_v&\\&1\end{pmatrix}
\begin{pmatrix}0&-u^{-1}\\u&\varpi_v\end{pmatrix}\right).
\end{align*}
Since the prime ideal $\p_k$ corresponding to the place $v=v_k$
is co-prime to $\q$, we see that $W_{\varphi_v^{0}}$ is right
$\GL_2(\o_v)$-invariant and thus the conclusion in the case at hand.

Next, suppose $\c=\q$. For $v<\infty$ such that $\ord_v(\q)=0$,
since by choice $\zeta_v\in\o_v^\times$, it follows that
$W_{\varphi_v^0}$ is right invariant under translation by
$\begin{psmallmatrix}\zeta_v&\\&1\end{psmallmatrix}$. Consequently the
proof for such $v$ is similar to the above argument. Henceforth we
assume $v$ is such that $\ord_v(\q)>0$. Then $v\neq v_k$, and we need
to show
\[
W_{\varphi_v^0}\!\left(
\begin{pmatrix}\gamma_2&\\&1\end{pmatrix}\ell_kh_{j,v}'\right)
=W_{\varphi_v^0}\!\begin{pmatrix}\zeta_v&\\&1\end{pmatrix}
W_{\varphi_v^0}\!\left(
\begin{pmatrix}\gamma_2&\\&1\end{pmatrix}h_{j,v}\right).
\]
Moving $h_{j,v}'$ past $\ell_k$ to the left, we re-write the left hand side of the above equation as
\[
W_{\varphi_v^0}\!\left(
\begin{pmatrix}\gamma_2\zeta_v\aid_{j,v}&\\&1\end{pmatrix}
\begin{pmatrix}1&\\\alpha_k\aid_{j,v}\zeta_v&1\end{pmatrix}\right),
\]
which in turn equals
\[
W_{\varphi_v^0}\!\begin{pmatrix}\gamma_2\zeta_v\aid_{j,v}&\\&1\end{pmatrix},
\]
since $\ord_v(\alpha_k\aid_{j,v}\zeta_v)\geq\ord_v(\q)$ by choice of
$\alpha_k$. It therefore remains to prove
\begin{equation}\label{comp-mult}
W_{\varphi_v^0}\!\begin{pmatrix}\gamma_2\zeta_v\aid_{j,v}&\\&1\end{pmatrix}
=W_{\varphi_v^0}\!\begin{pmatrix}\zeta_v&\\&1\end{pmatrix}
W_{\varphi_v^0}\!\begin{pmatrix}\gamma_2\aid_{j,v}&\\&1\end{pmatrix}.
\end{equation}

Let $\text{St}$ denote the Steinberg representation of $\GL_2(F_v)$,
i.e., the unique Langlands quotient of the induced module
$\Ind(\|\cdot\|_v^{-1/2},\|\cdot\|_v^{1/2})$. It is a well-known fact
that $\tau_v$ is either supercuspidal, or a twist of the Steinberg
representation $\text{St}\otimes\chi$, or an irreducible principal series
representation $\Ind(\chi_1,\chi_2)$, where $\chi,\chi_1$ and $\chi_2$
are characters of $F_v^\times$. Since $\ord_v(\c)=\ord_v(\q)>0$,
it follows from \cite[Proposition~3.4]{Tu} that $\tau_v$ cannot be
supercuspidal. Further, using the formula for the conductor of $\tau_v$
in the remaining cases (see \cite[Remark 4.25]{Ge}), we conclude that
$\tau_v\cong\Ind(\chi_1,\chi_2)$ with either $\chi_1$ or $\chi_2$ (but
not both) ramified. Thus one of the Langlands parameters of $\tau_v$
is zero; say $(\alpha,\beta)=(\alpha,0)$ are the Langlands parameters
of $\tau_v$. Now, we may appeal to the explicit description of the
essential function $W=W_{\varphi_v^0}$ (cf.~\cite[Theorem~4.1]{Miy})
which in our case ($n=2$) reads as
\[
W\!\begin{pmatrix}\varpi_v^f&\\&1\end{pmatrix}
=\begin{cases}
q_v^{-f/2}s_f(\alpha,\beta)&\text{if }f\geq0,\\
0&\mbox{otherwise},
\end{cases}
\]
where $s_f(\alpha,\beta)$ is the Schur polynomial
$\frac{\alpha^{f+1}-\beta^{f+1}}{\alpha-\beta}$. Since
$\beta=0$ and $\ord_v(\zeta_v),\ord_v(\gamma_2\aid_{j,v})\geq0$,
our assertion \eqref{comp-mult} follows.
\end{proof}

We define
\begin{equation}\label{ddc}
\lambda_\pi(\m_1,\m_2)=
N(\m_1\m_2)W_{\xi_f^0}\!
\begin{pmatrix}\idm_1\idm_2&&\\&\idm_1&\\&&1\end{pmatrix}
\end{equation}
and
$$
\lambda_\tau(\m_2)=\sqrt{N(\m_2)}W_{\varphi_f}\!
\begin{pmatrix}\idm_2&\\&1\end{pmatrix},
$$
where $\idm_1$ and $\idm_2$ are finite id\`eles such that
$\m_1=(\idm_1)$ and $\m_2=(\idm_2)$. Then
\begin{align*}
\frac1{N(\q_1)}\sum_{\beta_1\in\a\q_1^{-1}/\a}
\Lambda_j(s,\pi,\tau,\xi_\infty,\varphi_\infty,\beta,\zeta)&=
\frac{\sqrt{N(\zeta)}[X_1:\Gamma_j(\q')]}{[\Gamma_{1,j}(\q):\Gamma_j(\q')]}
\tau_\q(\chi_\tau,\beta_2)
\lambda_\tau((\zeta))
\prod_{v\mid\infty}\Psi_v(s;W_{\xi_v},W_{\varphi_v})\\
&\qquad\cdot\sum_k\sum_{\m_1\sim\p_k^{-1}}\sum_{\m_2\sim\a_j\p_k^2}
\lambda_\pi(\m_1,\m_2)\lambda_\tau(\m_2)\chi_{\omega_\tau}(\m_1)
N(\m_1^2\m_2)^{-s},
\end{align*}
where $\chi_{\omega_\tau}$ is the \gr{} of modulus $\q$ associated to
$\omega_\tau$ (which need not be primitive).
Noting that
$\frac{[X_1:\Gamma_j(\q')]}{[\Gamma_{1,j}(\q):\Gamma_j(\q')]}
=\frac{N(\q)}{N(\q_1(\zeta)\cap\q_2)}$,
we can now sum over $j$ to get
\begin{align*}
\frac1{N(\q_1)}\sum_{\beta_1}\sum_j\Lambda_j(s,\pi,\tau,\xi_\infty,\varphi_\infty,\beta,\zeta)
&=\frac{\sqrt{N(\q^2(\zeta))}}{N(\q_1(\zeta)\cap\q_2)}
\tau_\q(\chi_\tau,\beta_2)
\lambda_\tau((\zeta))
\prod_{v\mid\infty}\Psi_v(s;W_{\xi_v},W_{\varphi_v})\\
&\qquad\cdot\sum_{\m_1,\m_2}
\lambda_\pi(\m_1,\m_2)\lambda_\tau(\m_2)\chi_{\omega_\tau}(\m_1)
N(\m_1^2\m_2)^{-s}.
\end{align*}
It remains to identify the Dirichlet series in the above expression.
\begin{lemma}\label{dd}
We have
\begin{equation}\label{e:doublesum}
\sum_{\m_1,\m_2}\lambda_\pi(\m_1,\m_2)\lambda_\tau(\m_2)
\chi_{\omega_\tau}(\m_1)N(\m_1^2\m_2)^{-s}=L(s,\pi\times\tau).
\end{equation}
\end{lemma}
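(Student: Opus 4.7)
The plan is to recognize the double Dirichlet series as an Euler product and match each local factor with $L(s,\pi_v\times\tau_v)$. Since $\xi_f^0=\bigotimes_{v<\infty}\xi_v^0$ and $\varphi_f$ is the pure tensor with $\varphi_v^0$ at each finite $v$, both $W_{\xi_f^0}$ and $W_{\varphi_f}$ evaluated at the (block-)diagonal arguments appearing in $\lambda_\pi,\lambda_\tau$ factor as products over $v$. Together with the multiplicativity of $N(\cdot)$ and $\chi_{\omega_\tau}$, the sum in \eqref{e:doublesum} factors as $\prod_{v<\infty}S_v(s)$, where
\[
S_v(s)=\sum_{m_1,m_2\geq 0}\lambda_{\pi_v}(m_1,m_2)\,\lambda_{\tau_v}(m_2)\,\chi_{\omega_{\tau_v}}(\varpi_v^{m_1})\,q_v^{-s(2m_1+m_2)}.
\]
The restriction $m_1,m_2\geq 0$ comes from the support of the essential Whittaker function on the diagonal torus, via a computation of the same type as in Lemma~\ref{supp}.

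The key input is the explicit formula of Miyauchi \cite{Miy}, which extends the classical Shintani/Casselman--Shalika formula to ramified representations. In our normalization it gives
\[
\lambda_{\pi_v}(m_1,m_2)=s_{(m_1+m_2,\,m_1,\,0)}(\alpha_{v,1},\alpha_{v,2},\alpha_{v,3}),\qquad
\lambda_{\tau_v}(m_2)=s_{(m_2)}(\beta_{v,1},\beta_{v,2}),
\]
where $s_\lambda$ denotes the Schur polynomial and $\{\alpha_{v,i}\},\{\beta_{v,j}\}$ are the Langlands parameters of $\pi_v,\tau_v$ (padded with zeros at ramified places). The ideal-norm factors in the definitions \eqref{ddc} of $\lambda_\pi$ and $\lambda_\tau$ are chosen precisely to cancel the $\delta_B^{1/2}$ weight that occurs in Miyauchi's formula.

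Combining the relation $\chi_{\omega_{\tau_v}}(\varpi_v^{m_1})=(\beta_{v,1}\beta_{v,2})^{m_1}$ with the elementary Schur identity $s_{(\ell_1,\ell_2)}(x,y)=(xy)^{\ell_2}s_{(\ell_1-\ell_2)}(x,y)$ repackages the local summand as
\[
s_{(m_1+m_2,m_1,0)}(\vec{\alpha}_v)\,s_{(m_1+m_2,m_1)}(\vec{\beta}_v)\,q_v^{-s(2m_1+m_2)}.
\]
Applying the Cauchy identity
\[
\prod_{i,j}(1-x_iy_j)^{-1}=\sum_{\lambda}s_\lambda(\vec x)\,s_\lambda(\vec y),
\]
where $\lambda$ ranges over partitions of length at most $\min(3,2)=2$, and specializing to $(\vec x,\vec y)=(\vec\alpha_v,\vec\beta_v q_v^{-s})$, the reindexing $\lambda_1=m_1+m_2$, $\lambda_2=m_1$ identifies $S_v(s)$ term-by-term with $\prod_{i,j}(1-\alpha_{v,i}\beta_{v,j}q_v^{-s})^{-1}=L(s,\pi_v\times\tau_v)$. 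Taking the product over all finite $v$ yields the claim.

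The main obstacle is the careful invocation of Miyauchi's formula at ramified places and keeping the zero-parameter convention consistent on both sides: when $\omega_{\tau_v}$ is ramified one must verify that the vanishing of $\chi_{\omega_{\tau_v}}$ on $\p_v$ matches the vanishing of $\beta_{v,1}\beta_{v,2}$, and more generally that the Schur-polynomial formula for $\lambda_{\pi_v},\lambda_{\tau_v}$ remains valid in the padded form. Once this local bookkeeping is done, the algebraic content reduces to the Cauchy identity.
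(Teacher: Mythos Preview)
Your argument is correct, but it takes a different route from the paper. Both proofs reduce to the Cauchy identity at each finite place; the difference is how the Schur formula
\[
\lambda_\pi(\p^{k_1},\p^{k_2})=s_{(k_1+k_2,\,k_1,\,0)}(\alpha_{v,1},\alpha_{v,2},\alpha_{v,3})
\]
is obtained. You import it directly from Miyauchi~\cite{Miy}, and once that is in hand the local computation you sketch is exactly the paper's equations \eqref{e:gl2coeff}--\eqref{e:ctau} followed by Cauchy. The paper instead \emph{derives} this Schur formula: it first uses the already-established global identity \eqref{unr2} for unramified $\tau$ to conclude that $c_{\pi,\tau}=\lambda_{\pi\times\tau}$ in that case, then lets $(\gamma_1,\gamma_2)$ vary over a Zariski-dense set and invokes linear independence of the $s_{(k_1+k_2,k_1,0)}(\gamma_1,\gamma_2,0)$ to isolate $\lambda_\pi(\p^{k_1},\p^{k_2})$. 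Your approach is shorter and purely local; the paper's approach is deliberately global, yielding the $\GL_3$ essential-function values as a by-product of the integral representation rather than as an input (the authors flag this distinction themselves in the closing remarks of \S4). One small point worth making explicit in your write-up: at $v\mid\q$ you need $\chi_{\omega_\tau}(\p_v)=0$ to match $\beta_{v,1}\beta_{v,2}=0$; the former holds because $\chi_{\omega_\tau}$ has modulus $\q$, and the latter because any ramified $\tau_v$ on $\GL_2$ has at most one nonzero Langlands parameter.
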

\begin{proof}
For a fixed choice of $\tau$ and a non-zero integral ideal $\a$, define
$$
c_{\pi,\tau}(\a)=\sum_{\m_1^2\m_2=\a}\lambda_\pi(\m_1,\m_2)
\lambda_\tau(\m_2)\chi_{\omega_\tau}(\m_1).
$$
Then, for any unramified id\`ele class character $\omega$, we have
$$
\lambda_{\tau\otimes\omega}(\m_2)
=\lambda_\tau(\m_2)\chi_\omega(\m_2)
\quad\text{and}\quad
\chi_{\omega_{\tau\otimes\omega}}(\m_1)
=\chi_{\omega_\tau}(\m_1)\chi_\omega(\m_1)^2,
$$
so that
\begin{equation}\label{e:ctwist}
c_{\pi,\tau\otimes\omega}(\a)=c_{\pi,\tau}(\a)\chi_\omega(\a).
\end{equation}
Next define $\lambda_{\pi\times\tau}$ to be the Dirichlet coefficients
of $L(s,\pi\times\tau)$, so that, for any unramified $\omega$, we have
\begin{equation}\label{e:lambdatwist}
L(s,\pi\times(\tau\otimes\omega))=
\sum_\a\lambda_{\pi\times\tau}(\a)\chi_\omega(\a)N(\a)^{-s}.
\end{equation}
Note that both $c_{\pi,\tau}$ and $\lambda_{\pi\times\tau}$ are
multiplicative, so it suffices to show that they agree at prime powers.

Given integers
\begin{equation}\label{e:schurlambda}
\lambda_1\ge\lambda_2\ge\lambda_3\ge0,
\end{equation}
the Schur
polymial $s_{\lambda_1,\lambda_2,\lambda_3}(x_1,x_2,x_3)$ is the ratio
\begin{equation}\label{e:schurpoly}
\frac{\det\begin{pmatrix}
x_1^{\lambda_1+2}&x_2^{\lambda_1+2}&x_3^{\lambda_1+2}\\
x_1^{\lambda_2+1}&x_2^{\lambda_2+1}&x_3^{\lambda_2+1}\\
x_1^{\lambda_3}&x_2^{\lambda_3}&x_3^{\lambda_3}
\end{pmatrix}}
{\det\begin{pmatrix}
x_1^2&x_2^2&x_3^2\\
x_1&x_2&x_3\\
1&1&1
\end{pmatrix}}.
\end{equation}
By the Cauchy identity \cite[Theorem~43.3]{bump}, for sufficiently small
$\alpha_1,\alpha_2,\alpha_3,\gamma_1,\gamma_2,\gamma_3\in\C$,
we have
$$
\prod_{i=1}^3\prod_{j=1}^3\frac1{1-\alpha_i\gamma_j}
=\sum_\lambda s_\lambda(\alpha_1,\alpha_2,\alpha_3)
s_\lambda(\gamma_1,\gamma_2,\gamma_3),
$$
where the sum runs over all $\lambda=(\lambda_1,\lambda_2,\lambda_3)$
satisfying \eqref{e:schurlambda}.
Note that if $\gamma_3=0$ then only
terms with $\lambda_3=0$ contribute to the above sum. Writing
$\lambda_2=k_1$, $\lambda_1=k_1+k_2$ and replacing
$(\gamma_1,\gamma_2,\gamma_3)$ by $(x\gamma_1,x\gamma_2,0)$ for a small
$x\in\C$, we obtain
$$
\prod_{i=1}^3\prod_{j=1}^2\frac1{1-\alpha_i\gamma_jx}
=\sum_{k_1=0}^\infty\sum_{k_2=0}^\infty
s_{k_1+k_2,k_1,0}(\alpha_1,\alpha_2,\alpha_3)
s_{k_1+k_2,k_1,0}(\gamma_1,\gamma_2,0)x^{2k_1+k_2}.
$$

Now fix a prime ideal $\p$, and let $v$ be the corresponding place of
$F$; then $q_v=N(\p)$. Let $\{\alpha_1,\alpha_3,\alpha_3\}$ (resp.\
$\{\gamma_1,\gamma_2\}$) denote the Langlands
parameters of $\pi_v$ (resp.\ $\tau_v$), so that
\begin{equation}\label{e:pitaueuler}
L(s,\pi_v)=\prod_{i=1}^3\frac1{1-\alpha_iN(\p)^{-s}}
\quad\text{and}\quad
L(s,\tau_v)=\prod_{j=1}^2\frac1{1-\gamma_jN(\p)^{-s}}.
\end{equation}
Then by the above we have
\begin{align*}
L(s,\pi_v\times\tau_v)
&=\prod_{i=1}^3\prod_{j=1}^2\frac1{1-\alpha_i\gamma_jN(\p)^{-s}}\\
&=\sum_{k_1=0}^\infty\sum_{k_2=0}^\infty
s_{k_1+k_2,k_1,0}(\alpha_1,\alpha_2,\alpha_3)
s_{k_1+k_2,k_1,0}(\gamma_1,\gamma_2,0)N(\p)^{-(2k_1+k_2)s}.
\end{align*}
On the other hand, we have
$$
L(s,\pi_v\times\tau_v)
=\sum_{k=0}^\infty\lambda_{\pi\times\tau}(\p^k)N(\p)^{-ks},
$$
whence
\begin{equation}\label{e:rscauchy}
\lambda_{\pi\times\tau}(\p^k)
=\sum_{2k_1+k_2=k}
s_{k_1+k_2,k_1,0}(\alpha_1,\alpha_2,\alpha_3)
s_{k_1+k_2,k_1,0}(\gamma_1,\gamma_2,0).
\end{equation}
Further, by \eqref{e:pitaueuler} and the identity
$L(s,\tau_v)=\sum_{k=0}^\infty\lambda_\tau(\p^k)N(\p)^{-ks}$,
we have
$$
\lambda_\tau(\p^{k_2})=\sum_{j=0}^{k_2}\gamma_1^j\gamma_2^{k_2-j}
=\frac{\gamma_1^{k_2+1}-\gamma_2^{k_2+1}}{\gamma_1-\gamma_2}.
$$
Moreover, $\chi_{\omega_\tau}(\p^{k_1})=(\gamma_1\gamma_2)^{k_1}$,
so that
\begin{equation}\label{e:gl2coeff}
\lambda_\tau(\p^{k_2})\chi_{\omega_\tau}(\p^{k_1})
=(\gamma_1\gamma_2)^{k_1}\frac{\gamma_1^{k_2+1}-\gamma_2^{k_2+1}}{\gamma_1-\gamma_2}
=s_{k_1+k_2,k_1,0}(\gamma_1,\gamma_2,0),
\end{equation}
by \eqref{e:schurpoly}. Thus,
\begin{equation}\label{e:ctau}
c_{\pi,\tau}(\p^k)=\sum_{2k_1+k_2=k}
\lambda_\pi(\p^{k_1},\p^{k_2})
s_{k_1+k_2,k_1,0}(\gamma_1,\gamma_2,0).
\end{equation}

Suppose now that $\tau$ is unramified. Then we have $\q_1=\o_F$, so we may take
$\beta_1=\beta_2=0$. In that case,  we have (cf.\ \eqref{unr2})
$$
\sum_j\Lambda_j(s,\pi,\tau,\xi_\infty,\varphi_\infty,\beta)
=\Psi_\infty(s,\xi_\infty,\varphi_\infty)L(s,\pi\times\tau).
$$
Choosing $\xi_\infty$ and $\varphi_\infty$ such that
$\Psi_\infty(s,\xi_\infty,\varphi_\infty)\ne0$, we conclude that
\eqref{e:doublesum} holds. Replacing $\tau$ by an unramified twist
$\tau\otimes\omega$, by \eqref{e:ctwist} and \eqref{e:lambdatwist}, we
have
$$
\sum_\a c_{\pi,\tau}(\a)\chi_\omega(\a)N(\a)^{-s}
=\sum_\a\lambda_{\pi\times\tau}(\a)\chi_\omega(\a)N(\a)^{-s}
$$
for any unramified $\omega$. By \cite[Lemma~4.2]{B-Kr4} it follows that
$c_{\pi,\tau}(\a)=\lambda_{\pi\times\tau}(\a)$ for all $\a$.
In particular, taking $\a=\p^k$, by
\eqref{e:rscauchy} and \eqref{e:ctau} we find that
\begin{equation}\label{e:rscoeffs}
\sum_{2k_1+k_2=k}
s_{k_1+k_2,k_1,0}(\alpha_1,\alpha_2,\alpha_3)
s_{k_1+k_2,k_1,0}(\gamma_1,\gamma_2,0)
=\sum_{2k_1+k_2=k}
\lambda_\pi(\p^{k_1},\p^{k_2})
s_{k_1+k_2,k_1,0}(\gamma_1,\gamma_2,0),
\end{equation}
whenever $\tau$ is unramified. Applying this with
$\tau=\|\cdot\|^{it_1}\boxplus\|\cdot\|^{it_2}$
for arbitrary $t_1,t_2\in\R$, the Satake parameters
$(\gamma_1,\gamma_2)=(N(\p)^{-it_1},N(\p)^{-it_2})$ are Zariski-dense in
$\C^2$, so \eqref{e:rscoeffs} holds for arbitrary $\gamma_1,\gamma_2\in\C$.
Further, from \eqref{e:gl2coeff} it is easy to see that
the polynomials $s_{k_1+k_2,k_1,0}(x_1,x_2,0)$, for $k_1,k_2$ ranging
over all non-negative integers, are linearly independent.
Therefore, from \eqref{e:rscoeffs} we conclude that
$\lambda_\pi(\p^{k_1},\p^{k_2})=s_{k_1+k_2,k_1,0}(\alpha_1,\alpha_2,\alpha_3)$.

Finally, applying this together with \eqref{e:rscauchy} and
\eqref{e:ctau} for an arbitrary $\tau$ (not necessarily
unramified), we conclude that
$c_{\pi,\tau}(\p^k)=\lambda_{\pi\times\tau}(\p^k)$, as desired.
\end{proof}
Hence, we obtain
\begin{equation}\label{main1res}
\begin{aligned}
\frac1{N(\q_1)}\sum_{\beta_1}\sum_j\Lambda_j(s,\pi,\tau,\xi_\infty,\varphi_\infty,\beta,\zeta)
&=\frac{\sqrt{N(\q^2(\zeta))}}{N(\q_1(\zeta)\cap\q_2)}
\tau_\q(\chi_\tau,\beta_2)
\lambda_\tau((\zeta))
\Psi_\infty(s,\xi_\infty,\varphi_\infty)
L(s,\pi\times\tau).
\end{aligned}
\end{equation}
Taking $\zeta=1$, we obtain the conclusion
of Theorem~\ref{main1}.

\subsection{Functional equation}\label{fe}
In this section we assume both $\pi$ and $\tau$ are cuspidal automorphic representations. We then show how to use Theorem~\ref{main1} to give a new
proof of the analytic properties of $L(s,\pi\boxtimes\tau)$ when $\pi$
and $\tau$ have coprime conductors and $\tau_v$ is a twist-minimal
principal series representation for all finite $v$.

Let $\xi=\bigotimes_v\xi_v$ be a pure tensor in the space of $\pi$
such that $\xi_v=\xi_v^0$ for every finite $v$. Let
$(U_\xi,U_\xi')$
be the \emph{automorphic pair} attached to such a $\xi$ as in
Lemma~\ref{aut-dual}. Likewise, let $(U_\varphi,U_\varphi')$ be the
automorphic pair associated with a pure tensor $\bigotimes_v\varphi_v$
in the space of $\tau$ satisfying $\varphi_v=\varphi_v^0$ for all
$v<\infty$.

\begin{theorem}\label{main2}
Keep the above notation as well as that of Theorem~\ref{main1}. Assume
that $\pi$ and $\tau$ are cuspidal automorphic, with $\n+\q=\o_F$ and
$\c=\q$. Then $L(s,\pi\boxtimes\tau)$ continues to an entire function
and satisfies the functional equation
$$
\Psi_\infty(s,\xi_\infty,\varphi_\infty)L(s,\pi\boxtimes\tau)
=\epsilon N(\n^2\q^3)^{\frac12-s}
\widetilde{\Psi}_\infty(1-s,\xi_\infty,\varphi_\infty)
L(1-s,\widetilde{\pi}\boxtimes\widetilde{\tau}),
$$
where
$$
\widetilde{\Psi}_\infty(s,\xi_\infty,\varphi_\infty)=
\prod_{v\mid\infty}\Psi_v(s;\widetilde{W}_{\tilde{\xi}_v},\widetilde{W}_{\tilde{\varphi}_v})
\quad\mathrm{(cf.\ \eqref{tildedef})}
$$
and $\epsilon$ is as defined in \eqref{epsdef}.
\end{theorem}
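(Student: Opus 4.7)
Under the hypotheses $\n+\q=\o_F$ and $\c=\q$, a direct comparison shows that $L(s,\pi_v\boxtimes\tau_v)=L(s,\pi_v\times\tau_v)$ at every finite place $v$: at $v\mid\n$ the representation $\tau_v$ is an unramified principal series, while at $v\mid\q$ the twist-minimal principal series $\tau_v$ has one of its Langlands parameters equal to zero, so the polynomial $P$ in Lemma~\ref{aux} must be trivial. Hence the two versions of the $L$-function agree globally, which allows me to invoke Theorem~\ref{main1} directly.

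First, I apply Theorem~\ref{main1} with $\q_2=\q$ and a generator $\beta_2\in\q^{-1}\setminus\bigcup_{\p\mid\q}\p\q^{-1}$ to obtain
\begin{equation}\label{plan:lhs}
\tau_\q(\chi_\tau,\beta_2)\Psi_\infty(s,\xi_\infty,\varphi_\infty)L(s,\pi\boxtimes\tau)=\sum_{j=1}^h\Lambda_j\bigl(s,\pi,\tau,\xi_\infty,\varphi_\infty,(0,\beta_2)\bigr).
\end{equation}
Since $\pi$ and $\tau$ are cuspidal, the integrand $\Phi_{\xi_\infty,j}\begin{psmallmatrix}h & {}^t\beta\\ & 1\end{psmallmatrix}\varphi_j(h)$ decays rapidly on $\Gamma_j(\q)\backslash\GL_2(F_\infty)$ by the gauge estimates of \cite{Cog} together with cuspidality of $\varphi_j$, so each $\Lambda_j$ is absolutely convergent and entire in $s$. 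Varying $(\xi_\infty,\varphi_\infty)$ so that $\Psi_\infty(s,\xi_\infty,\varphi_\infty)$ is non-vanishing on a given compact set then delivers the entirety of $L(s,\pi\boxtimes\tau)$.

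Second, I transform the right-hand side of \eqref{plan:lhs} into the analogous sum for $(\tilde\pi,\tilde\tau)$ at $1-s$. This rests on a $\GL(3)$ matrix identity from \cite{LM}, schematically of the form
\[
{}^t\!\begin{psmallmatrix}h & {}^t\beta\\ & 1\end{psmallmatrix}^{-1}\begin{psmallmatrix}\nid I_2 & \\ & 1\end{psmallmatrix}=\gamma\cdot\begin{psmallmatrix}h^{\star} & {}^t\beta^{\star}\\ & 1\end{psmallmatrix}\cdot k,
\]
with $\gamma\in\GL_3(F)$, $k$ in an appropriate compact subgroup at finite places, $h^{\star}\in\GL_2(F_\infty)$, and $\beta^{\star}=(0,\beta_2^{\star})$ an admissible additive-twist parameter for the dual pair. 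Applying Lemma~\ref{aut-dual} replaces $U_{\xi_\infty\otimes\xi_f^0}$ at the transformed point by $\epsilon_\pi^{2}\,U_\xi'$; the analogous classical involution on $\GL(2)$ turns $\varphi_j$ into the newform attached to $\tilde\tau$; and the archimedean Whittaker functional equation converts $\Psi_\infty(s,\cdot,\cdot)$ into $\widetilde\Psi_\infty(1-s,\cdot,\cdot)$ up to a product of archimedean epsilon factors. The determinant Jacobian produces $N(\n^2\q^3)^{\frac12-s}$, while the action of the involution on the strong-approximation representatives $g_j$ induces a permutation $j\mapsto j^{\star}$.

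Finally, I apply Theorem~\ref{main1} a second time to $(\tilde\pi,\tilde\tau)$ at $1-s$, which has conductor $\n\q$ and satisfies the analogous central-character hypothesis, to recognize
\[
\sum_{j^{\star}=1}^h\Lambda_{j^{\star}}\bigl(1-s,\tilde\pi,\tilde\tau,\tilde\xi_\infty,\tilde\varphi_\infty,(0,\beta_2^{\star})\bigr)=\tau_\q(\chi_{\tilde\tau},\beta_2^{\star})\widetilde\Psi_\infty(1-s,\xi_\infty,\varphi_\infty)L(1-s,\tilde\pi\boxtimes\tilde\tau).
\]
Collecting all the constants produced along the way into the single factor $\epsilon$ yields the desired functional equation. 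The main obstacle is the second step: the matrix identity must be applied so that the $K_1(\n)$-invariance of $\xi_f^0$ and the newform property of $\varphi_f$ are preserved, $\beta^{\star}$ emerges in the admissible form required to re-invoke Theorem~\ref{main1}, and the ratio of Gauss sums, the $\nid I_2$-conjugation, the $\GL(2)$ Atkin--Lehner volume changes, and the archimedean local functional equations combine cleanly into $\epsilon\,N(\n^2\q^3)^{\frac12-s}$.
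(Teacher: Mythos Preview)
Your outline is essentially the paper's own argument, and the skeleton is correct. Two points need sharpening, and the first is a genuine gap.

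After the involution $h\mapsto{}^th^{-1}$, the use of Lemma~\ref{aut-dual}, and the rational change of variables $h\mapsto\gamma h$, the dual additive twist does \emph{not} land back in the setting of Theorem~\ref{main1}. The finite-place data on the $\GL_2$ side becomes $\begin{psmallmatrix}\aid_k\qid^2&\\&1\end{psmallmatrix}$ rather than $h_k=\begin{psmallmatrix}\aid_k&\\&1\end{psmallmatrix}$, so the integral you obtain is $\Lambda_k(1-s,\widetilde{\pi},\widetilde{\tau},\xi_\infty',\varphi_\infty',(0,\beta_2'),\qid^2)$ in the notation of \eqref{genadd}. This is precisely why the paper carries the auxiliary id\`ele $\zeta$ through the whole proof of Theorem~\ref{main1} and establishes the more general identity \eqref{main1res}; on the dual side one must invoke \eqref{main1res} with $\zeta=\qid^2$, not Theorem~\ref{main1} itself. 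Without this generalization you cannot close the loop, and in particular the factor $\lambda_{\widetilde{\tau}}(\q^2)$ appearing in $\epsilon$ (see \eqref{epsdef}) has no source.

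Second, no archimedean local functional equation is invoked. The passage from $\Psi_\infty$ to $\widetilde{\Psi}_\infty$ is purely definitional: Lemma~\ref{aut-dual} already replaces $W_{\xi_\infty}$ by $\widetilde{W}_{\tilde{\xi}_\infty}$ (and likewise for $\varphi$), while the substitution $h\mapsto{}^th^{-1}$ converts $\|\det h\|^{s-\frac12}$ into $\|\det h\|^{\frac12-s}$. The archimedean $\epsilon$-factors are absorbed into $\epsilon_\pi,\epsilon_\tau$ via \eqref{tildedef}, so your appeal to an ``archimedean Whittaker functional equation'' should be dropped.
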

\begin{proof}
For any finite-index subgroup $H\le\Gamma_j(\q)$, we have
\begin{align*}
&\Lambda_j(s,\pi,\tau,\xi_\infty,\varphi_\infty,(0,\beta_2))\\
&=\frac{N(\a_j)^{\frac12-s}}{[\Gamma_{1,j}(\q):H]}
\int_{H\backslash\GL_2(F_\infty)}
U_\xi\!\left(\begin{pmatrix}1&&\\&1&\beta_2\\&&1\end{pmatrix}
\begin{pmatrix}h&\\&1\end{pmatrix},
\begin{pmatrix}\aid_j&&\\&1&\\&&1\end{pmatrix}\right)
U_\varphi(h,h_j)\|\det{h}\|^{s-\frac12}\,dh.
\end{align*}
Let $\nid$ and $\qid$ be id\`eles such that $\n=(\nid)$ and $\q=(\qid)$.
We make the change of variables $h\mapsto{}^th^{-1}$ and use the
identities
$$
U_\xi(g)=\epsilon_\pi^2U_\xi'\!\left(
{}^tg^{-1}\begin{pmatrix}\nid&&\\&\nid&\\&&1\end{pmatrix}\right),
\quad
U_\varphi(g)=\epsilon_{\tau}U_\varphi'\!\left(
{}^tg^{-1}\begin{pmatrix}\qid&\\&1\end{pmatrix}\right),
$$
from Lemma~\ref{aut-dual} to get
\begin{align*}
\frac{\epsilon_\pi^2\epsilon_\tau N(\a_j)^{\frac12-s}}
{[\Gamma_{1,j}(\q):H]}
\int_{{}^tH\backslash\GL_2(F_\infty)}
&U_\xi'\!\left(\begin{pmatrix}1&&\\&1&\\&-\beta_2&1\end{pmatrix}
\begin{pmatrix}h&\\&1\end{pmatrix},
\begin{pmatrix}\aid_j^{-1}\nid&&\\&\nid&\\&&1\end{pmatrix}\right)\\
&\cdot U_\varphi'\!\left(h,
\begin{pmatrix}\aid_j^{-1}\qid&\\&1\end{pmatrix}\right)
\|\det{h}\|^{\frac12-s}\,dh.
\end{align*}
Next we make the change of variables $h\mapsto\gamma{h}$ for some
$\gamma\in\GL_2(F)$ and use automorphy of $U_\xi'$ and
$U_\varphi'$ to get
\begin{align*}
\frac{\epsilon_\pi^2\epsilon_\tau(N(\a_j)\|\det\gamma\|_\infty)^{\frac12-s}}
{[\Gamma_{1,j}(\q):H]}
\int_{\gamma^{-1}({}^tH)\gamma\backslash\GL_2(F_\infty)}
&U_\xi'\!\left(
\begin{pmatrix}h&\\&1\end{pmatrix},
\begin{pmatrix}\gamma^{-1}&\\&1\end{pmatrix}
\begin{pmatrix}1&&\\&1&\\&\beta_2&1\end{pmatrix}
\begin{pmatrix}\aid_j^{-1}\nid&&\\&\nid&\\&&1\end{pmatrix}\right)\\
&\cdot U_\varphi'\!\left(h,\gamma^{-1}
\begin{pmatrix}\aid_j^{-1}\qid&\\&1\end{pmatrix}\right)
\|\det{h}\|^{\frac12-s}\,dh.
\end{align*}

Put $\rid=\qid\beta_2$ and
choose $\gamma\in\GL_2(F)$, $\aid_k$ and $\kappa\in K(\q^2)$ such that
$\begin{psmallmatrix}\aid_j^{-1}\nid\qid&\\&\nid\qid^2\end{psmallmatrix}
=\gamma\begin{psmallmatrix}\aid_k&\\&1\end{psmallmatrix}\kappa$.
Then we have
$$
\begin{pmatrix}\gamma^{-1}&\\&1\end{pmatrix}
\begin{pmatrix}1&&\\&1&\\&\beta_2&1\end{pmatrix}
\begin{pmatrix}\aid_j^{-1}\nid&&\\&\nid&\\&&1\end{pmatrix}
=\qid^{-1}\begin{pmatrix}1&&-\beta_1'\\&1&-\beta_2'\\&&1\end{pmatrix}
\begin{pmatrix}\aid_k&&\\&1&\\&&1\end{pmatrix}
\begin{pmatrix}\kappa&\\&1\end{pmatrix}
\begin{pmatrix}1&\nid\rid\uid&\qid\uid\\&
\frac{\nid\rid\vid+1}{\qid}&\vid\\&\nid\rid&\qid\end{pmatrix},
$$
where $\begin{psmallmatrix}\beta_1'\\\beta_2'\end{psmallmatrix}
=\nid\qid\gamma^{-1}\begin{psmallmatrix}\aid_j^{-1}\uid\\\vid\end{psmallmatrix}
=\begin{psmallmatrix}\aid_k&\\&1\end{psmallmatrix}\kappa
\begin{psmallmatrix}\uid\\\qid^{-1}\vid\end{psmallmatrix}$.
We may choose $\uid,\vid\in\prod_\p\o_\p$ so that
$\frac{\nid\rid\vid+1}{\qid}\in\prod_\p\o_\p$, $(\nid\qid\vid)$ and
$(\nid\qid\aid_j^{-1}\uid)$ are
principal, and $\beta_1'=0$.
Taking determinants, we find that $(\det\gamma)=\n^2\q^3\a_j^{-1}\a_k^{-1}$.
Also,
$$
\gamma^{-1}\begin{pmatrix}\aid_j^{-1}\qid&\\&1\end{pmatrix}
=(\nid\qid^2)^{-1}\begin{pmatrix}\aid_k\qid^2&\\&1\end{pmatrix}
\cdot\begin{pmatrix}\qid^{-2}&\\&1\end{pmatrix}
\kappa\begin{pmatrix}\qid^2&\\&1\end{pmatrix}
\in(\nid\qid^2)^{-1}\begin{pmatrix}\aid_k\qid^2&\\&1\end{pmatrix}
K_1(\q^4).
$$

Hence, we obtain
\begin{align*}
\frac{cN(\n^2\q^3)^{\frac12-s}N(\a_k)^{s-\frac12}}
{[\Gamma_{1,j}(\q):H]}
\int_{\gamma^{-1}({}^tH)\gamma\backslash\GL_2(F_\infty)}
&U_\xi'\!\left(
\begin{pmatrix}1&&\\&1&\beta_2'\\&&1\end{pmatrix}
\begin{pmatrix}h&\\&1\end{pmatrix},
\begin{pmatrix}\aid_k&&\\&1&\\&&1\end{pmatrix}\right)\\
&\cdot U_\varphi'\!\left(h,
\begin{pmatrix}\aid_k\qid^2&\\&1\end{pmatrix}\right)
\|\det{h}\|^{\frac12-s}\,dh,
\end{align*}
where $c=\epsilon_\pi^2\epsilon_\tau\omega_{\widetilde{\pi}_f}(\qid^{-1})
\chi_{\widetilde{\pi}}(\qid)\omega_{\widetilde{\tau}_f}((\nid\qid^2)^{-1})
=\epsilon_\pi^2\epsilon_\tau\chi_{\omega_\pi}(\q)
\omega_{\tau_f}(\nid\qid^2)$.

We choose $H$ so that $\gamma^{-1}({}^tH)\gamma\subseteq\Gamma_k(\q^2)$.
Then $[\Gamma_{1,j}(\q):H]=[\Gamma_{1,k}(\q):\gamma^{-1}({}^tH)\gamma]$,
and by \eqref{genadd}, the above is
\begin{equation}\label{dualadd}
cN(\n^2\q^3)^{\frac12-s}
\Lambda_k(1-s,\widetilde{\pi},\widetilde{\tau},
\xi_\infty',\varphi_\infty',(0,\beta_2'),\qid^2),
\end{equation}
where $\xi_\infty'$ and $\varphi_\infty'$ are the vectors
in $V_{\widetilde{\pi}_\infty}$ and $V_{\widetilde{\tau}_\infty}$ such
that $W_{\xi_\infty'}=\widetilde{W}_{\tilde{\xi}_\infty}$ and
$W_{\varphi_\infty'}=\widetilde{W}_{\tilde{\varphi}_\infty}$.
Applying Theorem~\ref{main1} with $\q_1=\o_F$ and $\q_2=\q$, we have
$$
\sum_{j=1}^h\Lambda_j(s,\pi,\tau,\xi_\infty,\varphi_\infty,(0,\beta_2))
=\tau_\q(\chi_\tau,\beta_2)\Psi_\infty(s,\xi_\infty,\varphi_\infty)
L(s,\pi\times\tau).
$$
On the other hand, summing \eqref{dualadd} over $k$ and applying
\eqref{main1res} with $\zeta=\qid^2$ and $\pi,\tau,\psi_\infty$ replaced
by their duals, we get
\begin{align*}
&cN(\n^2\q^3)^{\frac12-s}
\overline{\tau_\q(\chi_\tau,\beta_2')}\lambda_{\widetilde{\tau}}(\q^2)
\Psi_\infty(1-s,\xi_\infty',\varphi_\infty')
L(1-s,\widetilde{\pi}\times\widetilde{\tau})\\
&=\epsilon N(\n^2\q^3)^{\frac12-s}
\widetilde{\Psi}_\infty(1-s,\xi_\infty,\varphi_\infty)
L(1-s,\widetilde{\pi}\times\widetilde{\tau}),
\end{align*}
where
\begin{equation}\label{epsdef}
\epsilon=\frac{\epsilon_\pi^2\epsilon_\tau\chi_{\omega_\pi}(\q)
\omega_{\tau_f}(\nid\qid^2)\lambda_{\widetilde{\tau}}(\qid^2)
\overline{\tau_\q(\chi_\tau,\beta_2')}}
{\tau_\q(\chi_\tau,\beta_2)}.
\end{equation}
Finally, since $\n+\q=\o_F$, we have
$L(s,\pi\boxtimes\tau)=L(s,\pi\times\tau)$ and
$L(s,\widetilde{\pi}\boxtimes\widetilde{\tau})
=L(s,\widetilde{\pi}\times\widetilde{\tau})$. This concludes
the proof.
\end{proof}

\section{$\GL_3\times\GL_1$}
In this section, $\tau=\omega$ is an id\`ele class character of $F$
of conductor $\q$, and $\pi$ is an irreducible admissible generic
representation of $\GL_3(\A_F)$. For each finite place $v$ of $F$,
the essential vector $\xi_v^0$ and the corresponding $W_{\xi_v^0}$ are
as described in the previous
section. For $\beta\in F^\times$, we embed $\beta$ in $\A_F$ via
$\beta\mapsto(\beta,0)\in F_\infty\times\A_{F,f}$ as before, and
for $\xi_\infty\in V_{\pi_\infty}$, $j=1,\ldots,h$, we define
the function $\Phi_{\xi_\infty,j}$ on $\A_F^\times$ via
\[
\Phi_{\xi_\infty,j}(y)=\PP_1^3(U_\xi)\!
\begin{pmatrix}(\beta y,\aid_j)&(\beta,0)&\\&1&\\&&1\end{pmatrix},
\quad\text{for }\xi=\xi_\infty\otimes\xi_f^0, y\in\A_F^\times.
\]
This function is not invariant under $F^\times$ due to the presence
of $(\beta,0)$, but as before it will follow from its Fourier expansion (see below) that it is invariant
under a suitable congruence subgroup when viewed as a function on
$F_\infty^\times$. We will from here onwards take $\beta\in\a\q^{-1}$, where $\a$ as before is the product $\prod_j\a_j$.

\subsection{The Fourier expansion of $\Phi_{\xi_\infty,j}$}
By definition, for $y\in F_\infty^\times$, we have
\[
\Phi_{\xi_\infty,j}(y)=\sum_{\gamma\in F^\times}
W_{\xi_\infty\otimes\xi_f^0}\!
\begin{pmatrix}\gamma(\beta y,\aid_j)&\gamma(\beta,0)&\\&1&\\&&1\end{pmatrix}.
\]
One checks that
\[
W_{\xi_f^0}\!\begin{pmatrix}\gamma\aid_j&&\\&1&\\&&1\end{pmatrix}
\ne0\implies\gamma\in \a_j^{-1}\cap F^\times,
\]
so that
\[
\Phi_{\xi_\infty,j}(y)=\sum_{\gamma\in\a_j^{-1}\cap F^\times}
W_{\xi_f^0}\!\begin{pmatrix}\gamma\aid_j&&\\&1&\\&&1\end{pmatrix}
W_{\xi_\infty}\!\begin{pmatrix}\gamma\beta y&\gamma\beta&\\&1&\\&&1\end{pmatrix}.
\]
Let us put
$a_{\xi_f^0}(\aid_j,\gamma)=W_{\xi_f^0}\!
\begin{psmallmatrix}\gamma\aid_j&&\\&1&\\&&1\end{psmallmatrix}$,
and note that it is invariant under $\gamma\mapsto\eta\gamma$ for
$\eta\in\o_F^\times$. Thus the above becomes
\[
\Phi_{\xi_\infty,j}(y)
=\sum_{\gamma\in\o_F^\times\backslash\a_j^{-1}\cap F^\times}
a_{\xi_f^0}(\aid_j,\gamma)\sum_{\eta\in\o_F^\times}\psi_\infty(\eta\gamma\beta)
W_{\xi_\infty}\!\begin{pmatrix}\eta\gamma\beta y&&\\&1&\\&&1\end{pmatrix}.
\]
Since $\gamma\beta\in\q^{-1}$, the map
$\eta\mapsto\psi_\infty(\eta\gamma\beta)$ factors through
$\Gamma_q=\{\epsilon\in\o_F^\times:\epsilon\equiv1\pmod*{\q}\}$,
and consequently
\begin{equation}\label{Phi2}
\Phi_{\xi_\infty,j}(y)
=\sum_{\gamma\in\o_F^\times\backslash\a_j^{-1}\cap F^\times}
a_{\xi_f^0}(\aid_j,\gamma)\sum_{\eta\in\Gamma_\q\backslash\o_F^\times}
\psi_\infty(\eta\gamma\beta)\sum_{\epsilon\in\Gamma_\q}
W_{\xi_\infty}\!\begin{pmatrix}\epsilon\eta\gamma\beta y&&\\&1&\\&&1\end{pmatrix}.
\end{equation}
In particular, it follows that $\Phi_{\xi_\infty,j}(y)$ is
$\Gamma_j(\q)$ invariant.

\subsection{Additive twists}
For $\beta\in\a_j\q^{-1}$, $\xi_\infty\in V_{\pi_\infty}$, and with the
rest of the notation as above, we define additive twist (the analogue
of \eqref{add}) in this situation to be
\[
\Lambda_j(s,\pi,\omega,\xi_\infty,\beta)
=\frac{N(\a_j)^{1-s}\|\beta\|_\infty^{s-1}}{[\o_F^\times:\Gamma_\q]}
\int_{\Gamma_\q\backslash F_\infty^\times}\Phi_{\xi_\infty,j}(y)
\omega_\infty(y)\|y\|_\infty^{s-1}d^\times y,
\]
where $\omega_\infty$ is the archimedean component of $\omega$. We
insert \eqref{Phi2} into the above expression and collapse the integral
and the sum over $\epsilon$ to get
\begin{align*}
\Lambda_j(s,\pi,\omega,\xi_\infty,\beta)
&=\frac{N(\a_j)^{1-s}\|\beta\|_\infty^{s-1}}{[\o_F^\times:\Gamma_\q]}
\sum_{\gamma\in\o_F^\times\backslash\a_j^{-1}\cap F^\times}
a_{\xi_f^0}(\aid_j,\gamma)\sum_{\eta\in\Gamma_\q\backslash\o_F^\times}
\psi_\infty(\eta\gamma\beta)\\
&\qquad\cdot\int_{F_\infty^\times}W_{\xi_\infty}\!
\begin{pmatrix}\eta\gamma\beta y&&\\&1&\\&&1\end{pmatrix}
\omega_\infty(y)\|y\|_\infty^{s-1}d^\times y.
\end{align*}
Changing $y\mapsto(\gamma\eta\beta)^{-1}y$, we obtain
\begin{align*}
\Lambda_j(s,\pi,\omega,\xi_\infty,\beta)&=N(\a_j)^{1-s}
\sum_{\gamma\in\o_F^\times\backslash\a_j^{-1}\cap F^\times}
\|\gamma\|_\infty^{1-s}a_{\xi_f^0}(\aid_j,\gamma)\\
&\quad\cdot\frac1{[\o_F^\times:\Gamma_\q]}
\sum_{\eta\in\Gamma_\q\backslash\o_F^\times}
\psi_\infty(\eta\gamma\beta)\omega_\infty(\eta\gamma\beta)^{-1}
\int_{F_\infty^\times}W_{\xi_\infty}\!
\begin{pmatrix}y&&\\&1&\\&&1\end{pmatrix}
\omega_\infty(y)\|y\|_\infty^{s-1}d^\times y.
\end{align*}
In the notation of \cite{B-Kr3}, the average over $\eta$
is $e_\q((\gamma\beta),\omega_\infty^{-1})$. Putting all of
this together we obtain
\[
\Lambda_j(s,\pi,\omega,\xi_\infty,\beta)
=\sum_{\gamma\in\o_F^\times\backslash\a_j^{-1}\cap F^\times}
N((\gamma)\a_j)^{1-s}a_{\xi_f^0}(\aid_j,\gamma)
e_\q((\gamma\beta),\omega_\infty^{-1})
\prod_{v\mid\infty}\Psi_v(s;W_{\xi_v},\omega_v).
\]
For $\gamma\in\o_F^\times\backslash\a_j^{-1}\cap F^\times$,
let $\a=(\gamma)\a_j$, then note that
\[
\lambda_\pi(\o_F,\a)=a_{\xi_f^0}(\aid_j,\gamma)N(\a).
\]
Consequently
\[
\Lambda_j(s,\pi,\omega,\xi_\infty,\beta)=\sum_{\a\sim\a_j}
\frac{\lambda_\pi(\o_F,\a)e_\q(\a\a_j^{-1}(\beta),\omega_\infty^{-1})}{N(\a)^s}
\cdot\prod_{v\mid\infty}\Psi_v(s;W_{\xi_v},\omega_v).
\]

\begin{lemma}\label{lem:standardcoeff}
Let $\lambda_\pi(\a)$ denote
the Dirichlet coefficients of $L(s,\pi)$, so that
\[
L(s,\pi\otimes\omega)=\sum_{\a\subset\o_F}
\frac{\lambda_\pi(\a)\chi_\omega(\a)}{N(\a)^s}
\]
for every unramified id\`ele class character $\omega$.
Then $\lambda_\pi(\a)=\lambda_\pi(\o_F,\a)$.
\end{lemma}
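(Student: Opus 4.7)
The plan is to reduce the identity to a local check at each prime ideal and then apply the Matringe formula~\eqref{unr3} specialized to the trivial character of $\GL_1$.

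First, I would observe that both sides are multiplicative. The coefficients $\lambda_\pi(\a)$ are multiplicative because $L(s,\pi)=\prod_vL(s,\pi_v)$ is an Euler product. The coefficients $\lambda_\pi(\o_F,\a)$ defined in~\eqref{ddc} are multiplicative since $W_{\xi_f^0}=\prod_{v<\infty}W_{\xi_v^0}$ factors over places and the finite id\`ele $\aid$ splits componentwise. Hence it suffices to show $\lambda_\pi(\p^k)=\lambda_\pi(\o_F,\p^k)$ for every finite place $v$ corresponding to $\p$ and every integer $k\ge 0$.

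Next, I would fix such a $v$ and apply~\eqref{unr3} with $n=3$, $m=1$, and $\tau_v=\mathbf{1}_v$ the trivial representation of $\GL_1(F_v)$ (certainly unramified, with normalized spherical Whittaker function $W_{\varphi_v^0}\equiv 1$). This yields
\[
\int_{F_v^\times}W_{\xi_v^0}\!\begin{pmatrix}y&&\\&1&\\&&1\end{pmatrix}\|y\|_v^{s-1}\,d^\times y
=L(s,\pi_v\boxtimes\mathbf{1}_v)=L(s,\pi_v).
\]
I would then unfold the left-hand side. Right-multiplication by $\begin{psmallmatrix}h&\\&1\end{psmallmatrix}$ with $h\in\GL_2(\o_v)$ leaves $W_{\xi_v^0}$ invariant, so the integrand depends only on $\ord_v(y)$. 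A standard unipotent-shift argument, identical to the one in the proof of Lemma~\ref{supp} (conjugate by $\begin{psmallmatrix}1&x&\\&1&\\&&1\end{psmallmatrix}$ with $x\in\o_v$ and use that the conductor of $\psi_v$ is $\o_v$), shows the integrand vanishes when $\ord_v(y)<0$. Together with $\vol(\o_v^\times)=1$, the integral reduces to
\[
\sum_{k=0}^\infty W_{\xi_v^0}\!\begin{pmatrix}\varpi_v^k&&\\&1&\\&&1\end{pmatrix}q_v^{-k(s-1)}
=L(s,\pi_v)=\sum_{k=0}^\infty\lambda_\pi(\p^k)q_v^{-ks}.
\]
Matching coefficients of $q_v^{-ks}$ gives $\lambda_\pi(\p^k)=q_v^k W_{\xi_v^0}\begin{psmallmatrix}\varpi_v^k&&\\&1&\\&&1\end{psmallmatrix}=\lambda_\pi(\o_F,\p^k)$, by definition~\eqref{ddc}.

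There is no substantial obstacle here; the argument is a direct application of the local integral representation~\eqref{unr3} together with the standard support and invariance properties of the essential Whittaker function, both of which have already been used in the paper (the former in deriving~\eqref{unr2} and Lemma~\ref{dd}, the latter in Lemma~\ref{supp}). The only point requiring a sentence of justification is the vanishing statement for $\ord_v(y)<0$, which is immediate from the conductor normalization of $\psi_v$.
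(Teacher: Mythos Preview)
Your argument is correct, but it proceeds differently from the paper's own proof. The paper reduces to prime powers just as you do, then invokes the identification $\lambda_\pi(\p^{k_1},\p^{k_2})=s_{k_1+k_2,k_1,0}(\alpha_1,\alpha_2,\alpha_3)$ established in the proof of Lemma~\ref{dd}; specializing to $k_1=0$ gives $\lambda_\pi(\o_F,\p^k)=s_{k,0,0}(\alpha_1,\alpha_2,\alpha_3)=\sum_{m_1+m_2+m_3=k}\alpha_1^{m_1}\alpha_2^{m_2}\alpha_3^{m_3}$, which is visibly $\lambda_\pi(\p^k)$. That identification in Lemma~\ref{dd} was obtained by a \emph{global} comparison (matching the unfolded integral against $L(s,\pi\times\tau)$ for all unramified $\tau$ and using Zariski density of Satake parameters), so the paper's proof of the present lemma is ultimately global in character. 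Your route, by contrast, is purely local: you apply~\eqref{unr3} directly with $m=1$ and $\tau_v=\mathbf{1}_v$, expand the integral over $\varpi_v^k\o_v^\times$, and read off the coefficients. This is shorter and more self-contained; indeed the paper itself remarks (just before the final corollary) that such a local derivation via~\eqref{unr3} is available, but deliberately emphasizes the global argument instead.
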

\begin{proof}
This  essentially follows from Lemma ~\ref{dd}, where we have
identified the double Dirichlet coefficient $\lambda_{\pi}(\m_1,\m_2)$ of $\pi$.
To be precise, by multiplicativity it suffices to verify the desired
identity at prime powers.
To that end, fix a prime $\p$ and let $v$ be the corresponding place of
$F$. Let $\alpha_1,\alpha_2,\alpha_3$ be the Langlands parameters of
$\pi_v$ and let $k\geq 0$ be any integer.
From the proof of Lemma~\ref{dd} we have
$\lambda_\pi(\o_F,\p^k)=s_{k,0,0}(\alpha_1,\alpha_2,\alpha_3)$.
On the other hand
\[
s_{k,0,0}(\alpha_1,\alpha_2,\alpha_3)=\sum_{m_1+m_2+m_3=k}\alpha_1^{m_1}\alpha_2^{m_2}\alpha_3^{m_3},
\]
where $m_1,m_2$ and $m_3$ are non-negative integers. The right-hand side
of this expression is precisely $\lambda_\pi(\p^{k})$ thus proving our
claim at prime powers.
\end{proof}

Let $L_j(s,\pi,\omega,\xi_\infty,\beta)$ denote the finite part of
$\Lambda_j(s,\pi,\omega,\xi_\infty,\beta)$. The following analogue of
Theorem~\ref{main1} for $\GL(3)\times\GL(1)$
is a consequence of \cite[Proposition 3.1]{B-Kr}.
\begin{corollary}
For any id\`ele class character character $\omega$, there are numbers
$\beta_i\in F$ and $c_{ij}\in\C$ (depending on $\omega$) such that
\[
L(s,\pi\times\omega)=\sum_{i,j}c_{ij}L_j(s,\pi,\omega,\xi_\infty,\beta_i).
\]
\end{corollary}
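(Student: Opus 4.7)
The plan is to reduce the statement to a purely finite, local inversion problem and then invoke our previous result \cite[Proposition 3.1]{B-Kr}. First, I would use Lemma~\ref{lem:standardcoeff} to rewrite the finite part of $\Lambda_j$ in the displayed form preceding the corollary, namely
\[
L_j(s,\pi,\omega,\xi_\infty,\beta)=\sum_{\a\sim\a_j}
\frac{\lambda_\pi(\a)\,e_\q\big(\a\a_j^{-1}(\beta),\omega_\infty^{-1}\big)}{N(\a)^s},
\]
and observe that the desired $L$-function splits by ideal class as
\[
L(s,\pi\times\omega)=\sum_{j=1}^h\sum_{\a\sim\a_j}
\frac{\lambda_\pi(\a)\chi_\omega(\a)}{N(\a)^s}.
\]

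Next, I would match these two Dirichlet series class-by-class: it suffices to find, for each $j$, finitely many pairs $(c_{ij},\beta_i)$ with $\beta_i\in\a_j\q^{-1}$ such that
\[
\chi_\omega(\a)=\sum_i c_{ij}\,e_\q\big(\a\a_j^{-1}(\beta_i),\omega_\infty^{-1}\big)
\qquad\text{for every }\a\sim\a_j.
\]
Writing $\a\a_j^{-1}=(\gamma)$ with $\gamma\in F^\times/\o_F^\times$, the right-hand side is a linear combination of averages of $\psi_\infty(\eta\gamma\beta_i)\omega_\infty(\eta\gamma\beta_i)^{-1}$ over $\eta\in\Gamma_\q\backslash\o_F^\times$, which is precisely the "additive-to-multiplicative" mechanism of \eqref{eq:addtomult} adapted to a number field.

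This inversion is exactly the content of \cite[Proposition 3.1]{B-Kr}: by letting $\beta_i$ range over a set of representatives of $\a_j\q^{-1}/\a_j$ and choosing the $c_{ij}$ to be suitable Gauss-sum type coefficients built from the primitive id\`ele class character attached to $\omega$, finite Fourier analysis on $(\o_F/\q)^\times$ (modulo the image of $\o_F^\times$) recovers the value $\chi_\omega(\a)$. Summing the resulting class-by-class identities over $j$ yields the corollary.

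The only real content is the finite Fourier inversion itself, which we have already carried out in loc.~cit.; the main obstacle in writing the proof is therefore purely bookkeeping, specifically keeping track of the interaction between the global character $\chi_\omega$, its archimedean component $\omega_\infty$, and the $\o_F^\times$-orbits that appear in the Dirichlet series for $L_j$.
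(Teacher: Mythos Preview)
Your proposal is correct and is exactly the paper's approach: the paper gives no proof beyond the sentence ``is a consequence of \cite[Proposition~3.1]{B-Kr}'', and you have simply spelled out the reduction---splitting both sides by ideal class, using Lemma~\ref{lem:standardcoeff} to identify $\lambda_\pi(\o_F,\a)$ with $\lambda_\pi(\a)$, and then recognising the remaining identity as the additive-to-multiplicative inversion of loc.~cit.
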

To end, we note the following corollary, which is another consequence
of identifying the Dirichlet coefficient of $L(s,\pi)$ in terms of
the associated essential function. Although this directly follows from
\eqref{unr3} by a local calculation (see \cite[Corollary 3.3]{Mat}),
the argument here is global in nature. Moreover, it can be extended
to $\GL(n)\times\GL(m)$ for arbitrary $n>m$, i.e.\ we can identify the
Dirichlet coefficients of $L(s,\pi\times\tau)$ for any pair $(\pi,\tau)$
in terms of the associated essential functions. We will investigate this
for $n>3$ in a forthcoming paper.
\begin{corollary}
Let $\xi\in V_\pi$ be a decomposable vector with $\xi_v=\xi_v^0$ for all
finite $v$. Then
\[
\int_{F^\times\backslash\A_F^\times}\PP_1^3(U_\xi)\!
\begin{pmatrix}h&&\\&1&\\&&1\end{pmatrix}
\|h\|^{s-1}\,d^\times h=L(s,\pi)\prod_{v\mid\infty}\Psi_v(s;W_{\xi_v}),
\]
where $\Psi_v(s;W_{\xi_v})=\int_{F_v^\times}W_{\xi_v}\!
\begin{psmallmatrix}a&&\\&1&\\&&1\end{psmallmatrix}\|a\|_v^{s-1}d^\times a$.
\end{corollary}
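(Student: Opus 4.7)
The plan is to unfold the integral exactly as in the derivation of \eqref{rs}--\eqref{unr3}, specializing to $m=1$, $n=3$ and the trivial representation of $\GL_1(\A_F)$, and then identify the resulting local Mellin transforms at finite places using the Dirichlet-coefficient formula \eqref{ddc} together with Lemma~\ref{lem:standardcoeff}. This gives the ``global'' argument promised in the remark preceding the statement, bypassing Matringe's local formula.

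First, I apply the Fourier expansion \eqref{fs} with $m=1$, $n=3$. Since $\U_1$ is trivial, this reads
\[
\PP_1^3(U_\xi)\!\begin{pmatrix}h&&\\&1&\\&&1\end{pmatrix}
=\sum_{\gamma\in F^\times}W_\xi\!\begin{pmatrix}\gamma h&&\\&1&\\&&1\end{pmatrix}.
\]
Substituting into the integral and collapsing the $\gamma$-sum with the outer integral—absolutely convergent for $\Re(s)\gg1$ by the standard gauge estimates on Whittaker functions—yields
\[
\int_{\A_F^\times}W_\xi\!\begin{pmatrix}h&&\\&1&\\&&1\end{pmatrix}\|h\|^{s-1}\,d^\times h.
\]
Since $\xi$ is a pure tensor with $\xi_v=\xi_v^0$ at every finite place, $W_\xi=\prod_vW_{\xi_v}$, so this factors as a product of local Mellin transforms. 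The archimedean factors are by definition $\prod_{v\mid\infty}\Psi_v(s;W_{\xi_v})$, and it remains to show
\[
\int_{F_v^\times}W_{\xi_v^0}\!\begin{pmatrix}a&&\\&1&\\&&1\end{pmatrix}\|a\|_v^{s-1}\,d^\times a=L(s,\pi_v)
\]
at every finite $v$.

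To identify this local integral globally, I first observe that $W_{\xi_v^0}\begin{psmallmatrix}a&&\\&1&\\&&1\end{psmallmatrix}$ vanishes unless $\ord_v(a)\geq0$: combining the $\GL_2(\o_v)$-invariance of $W_{\xi_v^0}\begin{psmallmatrix}g&\\&1\end{psmallmatrix}$ with the upper-unipotent transformation law (as in the final part of the proof of Lemma~\ref{supp}) forces $\psi_v(ax)=1$ for every $x\in\o_v$, hence $a\in\o_v$. The integral then collapses to the Dirichlet series $\sum_{k\ge0}W_{\xi_v^0}\begin{psmallmatrix}\varpi_v^k&&\\&1&\\&&1\end{psmallmatrix}q_v^{-k(s-1)}$. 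By the definition \eqref{ddc} one has $\lambda_\pi(\o_F,\p_v^k)=q_v^kW_{\xi_v^0}\begin{psmallmatrix}\varpi_v^k&&\\&1&\\&&1\end{psmallmatrix}$, and by Lemma~\ref{lem:standardcoeff} this equals the standard Dirichlet coefficient $\lambda_\pi(\p_v^k)$. Thus the sum telescopes to $\sum_{k\ge0}\lambda_\pi(\p_v^k)q_v^{-ks}=L(s,\pi_v)$, completing the identification.

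The only genuine obstacle is convergence at the unfolding step: since $\pi$ is only assumed admissible generic rather than automorphic, $U_\xi$ is merely continuous of moderate growth, so one must verify absolute convergence of the double sum/integral in a suitable right half-plane. This follows from the standard Jacquet gauge bound on Whittaker functions exactly as in the convergence justification immediately following \eqref{rs}, and all subsequent steps are formal manipulations of absolutely convergent expressions.
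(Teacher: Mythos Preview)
Your proof is correct and rests on the same key input as the paper's---namely Lemma~\ref{lem:standardcoeff}, which identifies $\lambda_\pi(\o_F,\a)$ with the standard Dirichlet coefficient $\lambda_\pi(\a)$---but the bookkeeping is organized differently. You unfold adelically and factor the resulting integral over $\A_F^\times$ into a product of local Mellin transforms, then identify each finite local factor as the Euler factor $L(s,\pi_v)$ via the support argument and \eqref{ddc}. The paper instead decomposes $F^\times\backslash\A_F^\times$ along ideal classes $\A_F^\times=\coprod_j F^\times F_\infty^\times\aid_j\prod_{v<\infty}\o_v^\times$, reads off the global Dirichlet series $\sum_\a\lambda_\pi(\a)N(\a)^{-s}$ directly, and only then recognizes it as $L(s,\pi)$. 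Your route is the standard Rankin--Selberg unfolding and is slightly more streamlined; the paper's route stays parallel to the classical class-group computations used throughout \S\ref{GL2} and earlier in \S4, which is presumably why the authors chose it. Either way the substance is the same, and your convergence remark correctly flags the only analytic point.
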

\begin{proof}
We have the Fourier expansion
$\PP_1^3(U_\xi)(g)=\sum_{\gamma\in F^\times}W_\xi\!\left(
\begin{psmallmatrix}\gamma&&\\&1&\\&&1\end{psmallmatrix}g\right)$.
Inserting this in the left-hand side above, we get
\[
\int_{F^\times\backslash\A_F^\times}\sum_\gamma W_\xi\!
\begin{pmatrix}\gamma h&&\\&1&\\&&1\end{pmatrix}\|h\|^{s-1}\,d^\times h.
\]
Using $\A_F^\times=\coprod_jF^\times F_\infty^\times
\aid_j(\prod_{v<\infty}\o_v^\times)$, we may write this as
\[
\sum_j\int_{\o^\times\backslash
F_\infty^\times}\sum_{\gamma\in\a_j^{-1}\cap F^\times}
W_{\xi_f^0}\!\begin{pmatrix}\gamma\aid_j&&\\&1&\\&&1\end{pmatrix}
W_{\xi_\infty}\!\begin{pmatrix}\gamma h_\infty&&\\&1&\\&&1\end{pmatrix}
\|\aid_j\|^{s-1}\|h\|_\infty^{s-1}\,d^\times h_\infty,
\]
which in turn is the same as
\[
\sum_j\int_{\o^\times\backslash F_\infty^\times}
\sum_{\gamma\in\o_F^\times\backslash\a_j^{-1}\cap F^\times}
a_{\xi_f^0}(\aid_j,\gamma)\sum_{\eta\in\o_F^\times}
W_{\xi_\infty}\!\begin{pmatrix}\gamma\eta h_\infty&&\\&1&\\&&1\end{pmatrix}
\|\aid_j\|^{s-1}\|h\|_\infty^{s-1}\,d^\times h_\infty.
\]
Thus the left-hand side becomes (by a similar calculation to the above)
\[
\sum_j
\sum_{\gamma\in\o_F^\times\backslash\a_j^{-1}\cap F^\times}
a_{\xi_f^0}(\aid_j,\gamma)\|\aid_j\|^{s-1}\|\gamma\|_\infty^{1-s}
\prod_{v\mid\infty}\Psi_v(s;W_{\xi_v}).
\]
Since $a_{\xi_f^0}(\aid_j,\gamma)=N(\a)^{-1}\lambda_\pi(\o_F,\a)$ for
$(\gamma)\a_j=\a$, by Lemma~\ref{lem:standardcoeff}, this is
\[
\sum_j\sum_{\a\sim\a_j}\frac{\lambda_\pi(\a)}{N(\a)^s}
\cdot\prod_{v\mid\infty}\Psi_v(s;W_{\xi_v}),
\]
proving our assertion.
\end{proof}

\end{document}